\newtheorem{theorem}{Theorem}[section]
\newtheorem{proposition}[theorem]{Proposition}
\newtheorem{remark}[theorem]{Remark}
\newtheorem{claim}[theorem]{Claim}
\newtheorem{question}[theorem]{Question}
\newtheorem{definition}[theorem]{Definition}
\newtheorem{corollary}[theorem]{Corollary}
\newtheorem{lemma}[theorem]{Lemma}
\newtheorem{example}[theorem]{Example}
\newtheorem{conjecture}[theorem]{Conjecture}
\title{Symplectic Geometry of Anosov Flows in Dimension 3 and Bi-Contact Topology}
\author{Surena Hozoori}
\newcommand{\Addresses}{{
  \bigskip
  \footnotesize

Surena Hozoori, \textsc{Department of Mathematics, Georgia Institute of Technology,
    Atlanta, GA 30332}\par\nopagebreak
  \textit{E-mail address}: \texttt{shozoori3@gatech.edu}
  
  }}
  \date{}
\begin{document}
\maketitle
\noindent
\begin{abstract}
We give a purely contact and symplectic geometric characterization of Anosov flows in dimension 3 and discuss a framework to use tools from contact and symplectic geometry and topology in the study of Anosov dynamics. We also discuss some uniqueness results regarding the underlying (bi)-contact structures for an Anosov flow and give a characterization of Anosovity based on Reeb flows.
\end{abstract}

\section{Introduction}\label{1}

Anosov flows were introduced by Dimitri Anosov \cite{anosov0,anosov} in 1960s as a generalization of geodesic flows of hyperbolic manifolds and were immediately considered an important class of dynamical systems, thanks to many interesting global properties. Many tools of dynamical system, including ergodic theory, helped to increase our understanding of Anosov flows (see \cite{marg} for early developments). But more profound connections to the topology of the underlying manifold, were discovered in dimension 3, thanks to the use of foliation theory. This was initiated by many, including Thurston, Plante and Verjovsky.  However, more recent advances in the mid 1990s came from new techniques in foliation theory, introduced by Sergio Fenley, alongside Barbot, Barthelmé, etc (see \cite{fenley1} as the seminal work and \cite{topanosov} for a nice survey of such results).

The goal of this paper is to discuss a geometric and topological framework for the study of Anosov flows in dimension 3, provided thanks to our purely contact and symplectic characterization of such~flows.

In this paper, we consider $M$ to be a closed, connected, oriented 3-manifold and assume (projectively) Anosov flows to be {\em orientable}, i.e. the associated {\em stable and unstable directions} are orientable line fields (Assuming the orientability of $M$, this can be achieved, possibly after going to a double cover of $M$). See Section~\ref{2} and Section~\ref{3} for related definitions and discussions.

\begin{theorem}\label{main1}

Let $\phi^t$ be a flow on the 3-manifold $M$, generated by the $C^1$ vector field $X$. Then $\phi^t$ is Anosov, if and only if, $\langle X \rangle= \xi_+ \cap \xi_-$, where $\xi_+$ and $\xi_-$ are transverse positive and negative contact structures, respectively, and  there exist contact forms $\alpha_+$ and $\alpha_-$ for $\xi_+$ and $\xi_-$, respectively, such that $(\alpha_-,\alpha_+)$ and $(-\alpha_-,\alpha_+)$ are Liouville pairs.

\end{theorem}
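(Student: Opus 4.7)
My plan is to prove both directions of the characterization separately. The guiding principle, due to Mitsumatsu and Eliashberg--Thurston, is that projectively Anosov flows on 3-manifolds are exactly those generated by $X$ with $\langle X\rangle=\xi_+\cap\xi_-$ for a bi-contact pair $(\xi_-,\xi_+)$. The content of the theorem is that the Liouville pair refinement is exactly what promotes a projectively Anosov flow to a genuine Anosov one.

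For the forward direction, I would start from an Anosov flow and invoke the existence of an adapted Lyapunov metric $g$ in which the splitting $TM=E^s\oplus\langle X\rangle\oplus E^u$ is orthogonal and the pointwise infinitesimal exponents $\nu<0<\mu$ are recorded by $[X,e_s]=\nu\,e_s$ and $[X,e_u]=\mu\,e_u$, for globally chosen unit sections $e_s\in E^s$, $e_u\in E^u$ (allowed by the orientability hypothesis). Using the dual coframe $(\theta^X,\theta^s,\theta^u)$, set
\[
\alpha_+:=\theta^s+\theta^u,\qquad \alpha_-:=\theta^s-\theta^u.
\]
A direct computation, using only $X$-invariance of $E^s,E^u$, yields
\[
\alpha_+\wedge d\alpha_+=(\mu-\nu)\,\theta^X\wedge\theta^s\wedge\theta^u,\qquad \alpha_-\wedge d\alpha_-=-(\mu-\nu)\,\theta^X\wedge\theta^s\wedge\theta^u,
\]
so $\xi_\pm=\ker\alpha_\pm$ are transverse contact structures of opposite signs with $\xi_+\cap\xi_-=\langle X\rangle$. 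I would then expand $(d(e^{s}\alpha_++e^{-s}\alpha_-))^2$ on $\mathbb{R}_s\times M$; since $\dim M=3$ forces $d\alpha_\pm\wedge d\alpha_\mp=0$, the Liouville pair condition collapses to the single pointwise inequality
\[
-4\,(\alpha_+\wedge d\alpha_+)(\alpha_-\wedge d\alpha_-)>(\alpha_+\wedge d\alpha_- - \alpha_-\wedge d\alpha_+)^2,
\]
which in the frame above reads $(\mu-\nu)^2>(\mu+\nu)^2$, i.e.\ $\mu\nu<0$, precisely the Anosov hyperbolicity. The Liouville pair condition for $(-\alpha_-,\alpha_+)$ reduces to the same inequality since the cross-term appears squared.

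For the reverse direction, I would apply Mitsumatsu's theorem to extract from the bi-contact pair $(\xi_-,\xi_+)$ a dominated splitting $TM/\langle X\rangle=\bar\xi_+\oplus\bar\xi_-$, so that $\phi^t$ is already projectively Anosov. Expanding the Liouville pair conditions on both $(\alpha_-,\alpha_+)$ and $(-\alpha_-,\alpha_+)$ exactly as in the forward step yields the same pointwise quantitative inequality displayed above. The plan is then to use this strict inequality, together with the Mitsumatsu dominated splitting, to construct a Riemannian metric on $M$ and $X$-invariant lifts $E^\pm\subset TM$ of $\bar\xi_\mp$, under which $E^+$ is uniformly exponentially expanded and $E^-$ uniformly exponentially contracted by $\phi^t$ with strictly positive rates bounded below by a quantity extracted from the Liouville inequality.

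The principal obstacle is this second direction: translating the pointwise Liouville inequality into genuine exponential expansion/contraction along orbits. The dominated splitting from Mitsumatsu does not by itself supply flow-invariant line fields inside $\xi_\pm$; these must be extracted as limits of forward/backward iterates and checked to be uniformly exponentially contracted/expanded, with rates controlled by the Liouville quantity $-4\,(\alpha_+\wedge d\alpha_+)(\alpha_-\wedge d\alpha_-)-(\alpha_+\wedge d\alpha_- - \alpha_-\wedge d\alpha_+)^2$. The asymmetry built into requiring \emph{both} $(\alpha_-,\alpha_+)$ and $(-\alpha_-,\alpha_+)$ to be Liouville pairs is essential: it is exactly what controls the cross-term $\alpha_+\wedge d\alpha_- - \alpha_-\wedge d\alpha_+$ and thereby separates the two transverse dynamics into genuinely hyperbolic ones.
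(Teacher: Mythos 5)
Your high-level framing is right and the packaging of the double Liouville pair condition as a single pointwise quadratic inequality
\[
\bigl(\alpha_+\wedge d\alpha_- - \alpha_-\wedge d\alpha_+\bigr)^2 < -4\,(\alpha_+\wedge d\alpha_+)(\alpha_-\wedge d\alpha_-)
\]
is a nice and correct observation (it follows from analyzing $Au^2+Cu-B>0$ for all $u>0$ with $A>0>B$, for both choices of sign of the cross-term). However, there are two genuine gaps.

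The forward direction, as written, does not prove the theorem under the stated hypothesis that $X$ is merely $C^1$. You take a Lyapunov metric, choose unit sections $e_s\in E^{ss}$, $e_u\in E^{uu}$, build the dual coframe $(\theta^X,\theta^s,\theta^u)$, and compute $d\alpha_\pm$. But for a $C^1$ Anosov flow the invariant line fields $E^{ss},E^{uu}$ are in general only H\"older continuous, not $C^1$, so $\theta^s$ and $\theta^u$ are not differentiable 1-forms and $d\alpha_\pm$ is not defined. The bracket identity $[X,e_s]=\nu\,e_s$ can be salvaged as a Lie derivative along $X$ after a time-averaging of the metric, but that gives differentiability only in the $X$-direction, which is not enough to produce a bona fide contact form. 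This is precisely the obstruction that the paper's Remark~\ref{reg} identifies and that the bulk of the proof of Theorem~\ref{main} is designed to overcome: one must $C^1$-approximate the weak bundles (and suitably rescale using Lemma~\ref{app}, which constructs approximations with controlled $X$-derivative), push forward/backward by $\phi^T$ to recover the geometry of the invariant bundles asymptotically, and then verify that the contact and Liouville conditions hold for large $T$ by tracking exactly the $X$-derivative data. Without this step your direct computation proves Mitsumatsu's original $C^1$-bundle case (e.g.\ smooth volume-preserving Anosov flows), not the general statement.

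The converse direction you correctly flag as the principal obstacle, but the sketch stops exactly where the work begins. You would need (i) to extract from the bi-contact pair the stable and unstable directions $E^s,E^u\subset TM/\langle X\rangle$ as in Remark~\ref{geomca}, (ii) to relate the pointwise Liouville inequality to the expansion rates $r_s,r_u$ of those directions, and (iii) to invoke the fact that a hyperbolic dominated splitting of the linear Poincar\'e flow lifts to a genuine Anosov splitting (Proposition~1.1 of \cite{hyplift}). Step (ii) again runs into regularity: the ``time'' $\tau_u$ at which the linear interpolation of $\alpha_-$ and $\alpha_+$ passes through $\pi^{-1}(E^s)$ is only continuous, so the paper replaces it with a $C^1$-approximation $\tau_u^T$ and shows the symplectic condition at $t=\tau_u^T$ converges to the statement $r_u>0$; the analogous computation with $(-\alpha_-,\alpha_+)$ gives $r_s<0$. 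Your plan of directly constructing ``$X$-invariant lifts $E^\pm\subset TM$'' of the dominated splitting and a metric making them uniformly expanded/contracted is morally what the expansion-rate argument accomplishes, but one cannot get the invariant lifts for free from the dominated splitting alone; one needs the hyperbolicity of the quotient splitting first, which is exactly what has to be extracted from the Liouville inequality.
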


Although the relation to contact geometry was observed by Eliashberg-Thurston \cite{confoliations} and more thoroughly by Mitsumatsu \cite{mitsumatsu}, we improve those observations into a full characterization of such flows. More precisely, Mitsumatsu \cite{mitsumatsu} proves the existence of Liouville pairs in the case of {\em smooth volume preserving} Anosov flows, in order to introduce a large family of {\em non-Stein Liouville domains} (generalizing a previous work of McDuff \cite{stein}). Nevertheless, our goal in this paper is to give a complete characterization of Anosov flows in full generality, which is necessary for developing a contact and symplectic geometric theory of such flows. In order to do so, we use natural geometric quantities, namely {\em expansion rates} (see Section~\ref{3}), to achieve a refinement of Mitsumatsu's observation, which helps us generalize his result to an arbitrary (possibly non-volume preserving) $C^1$ Anosov flow, as well as prove the converse. Note that Brunella \cite{brunella} has shown the abundance of Anosov flows with no invariant volume forms. However,  the main technical difficulty is that for smooth volume preserving Anosov flows, the weak stable and unstable bundles are known to be at least $C^1$ \cite{hruder}, which significantly simplifies the geometry of an Anosov flow (see \cite{mitsumatsu} or Section~5.2 of \cite{hoz3} for this simplified setting). In the absence of such regularity condition in the general setting of Theorem~\ref{main1}, we introduce approximation techniques (see Section~\ref{4}), tailored for the setting of Anosov flows, which facilitate translating the information of a given $C^1$ Anosov flow (with possibly $C^0$ invariant foliations) to the corresponding contact structures (which are at least $C^1$). We believe that these approximation techniques are independently interesting and can be used regarding other questions in Anosov dynamics (for instance, see \cite{hoz3} for application in Anosov surgeries). See Remark~\ref{reg}.


By Theorem~\ref{main1}, the vector field which generates an Anosov flow in particular lies in the intersection of a pair of positive and negative contact structures, i.e. a {\em bi-contact structure}. Eliashberg-Thurston \cite{confoliations} and Mitsumatsu \cite{mitsumatsu} show that this condition in fact has dynamical interpretation and defines a large class of flows, named {\em projectively Anosov flows}. These are flows, which induce, via $\pi:TM\rightarrow TM/\langle X\rangle$, a flow with {\em dominated splitting} on $TM/\langle X\rangle$ (see Section~\ref{3}). 
From this viewpoint, Theorem~\ref{main1} should be seen as a significant extension of the dictionary between Anosov dynamical and symplectic geometric concepts, based on the equivalence between projectively Anosov flows and bi-contact structures, as observed by Eliashberg-Thurston and Mitsumatsu.

We remark that projectively Anosov flows are previously studied in various contexts, under different names. In the geometry and topology literature, beside projectively Anosov flows, they are referred to as {\em conformally Anosov flows} and are studied from the perspectives of foliation theory~\cite{confoliations,noda,asaoka,colin}, Riemannian geometry of contact structures \cite{blperr,perr,hoz2} and Reeb dynamics \cite{hoz}. This is while, in the dynamical systems literature, the term conformally Anosov is preserved for another dynamical concept (for instance see \cite{kanai,sullivan,llave}) and the dynamical aspects of projectively Anosov flows are studied under the titles {\em flows with dominated splitting} (see \cite{beyond,pujals,impa,survdom,clinic}) or {\em eventually relatively pseudo hyperbolic flows} \cite{hps}.

Although, it is not immediately clear if the class of projectively Anosov flows is larger than Anosov flows, first examples of such flows on $\mathbb{T}^3$ and Nil manifolds \cite{mitsumatsu,confoliations}, which do not admit any Anosov flows \cite{fun}, as well as more recent examples of projectively Anosov flows on atoroidal manifolds, which cannot be deformed to Anosov flows \cite{bbp}, proved the properness of the inclusion. In fact, we now know that unlike Anosov flows, projectively Anosov flows are abundant. For instance, there are infinitely many distinct projectively Anosov flows on $\mathbb{S}^3$ and no Anosov flows \cite{otbi}. Therefore, Theorem~\ref{main1} can be seen as a host of geometric and topological rigidity conditions on a projectively Anosov flow. In particular, this enables us to use various contact and symplectic geometric and topological tools in the study of Anosov dynamics. For instance, there are many questions about the knot theory of the periodic orbits of Anosov flows. Thanks to Theorem~\ref{main1}, such periodic orbits are now {\em Legendrian knots} for both underlying contact structures and moreover, correspond to {\em exact Lagrangians} in the constructed Liouville pairs. These are standard and well studied objects in contact and symplectic topology and now, the same techniques can be employed for understanding the periodic orbits of such flows (see Remark~\ref{per}).

In contact topology, thanks to {\em Darboux theorem}, there are no local invariants and {\em Gray's theorem} implies that homotopy through contact structures can be done by an isotopy of the ambient manifold. Therefore, the local structure of contact structures does not carry any information and the subtlety of these structures is hidden in their global topological properties. In fact, we have a hierarchy of topological rigidity conditions on a contact manifold (see Section~\ref{2}). Although it is not trivial, it is known that all the inclusions below are proper.

$$\bigg\{\substack{\text{Stein fillable} \\  \text{contact manifolds}} \bigg\} \mbox{\Large$\subset$}
\bigg\{\substack{\text{Exactly symplectically fillable} \\  \text{contact manifolds}} \bigg\} \mbox{\Large$\subset$}
\bigg\{\substack{\text{Strongly symplectically fillable} \\  \text{contact manifolds}} \bigg\}$$ $$ \mbox{\Large$\subset$}
\bigg\{\substack{\text{Weakly symplectically fillable} \\  \text{contact manifolds}} \bigg\} \mbox{\Large$\subset$}
\bigg\{\substack{\text{Tight} \\  \text{contact manifolds}} \bigg\} \mbox{\Large$\subset$}
\bigg\{ \text{contact manifolds} \bigg\}.$$

Now, we can naturally apply the hierarchy of contact topology to bi-contact structures and therefore, achieve a filtration of Anosovity concepts (see Section~\ref{7} for the precise definitions).

$$\bigg\{\substack{\text{Anosov flows}} \bigg\} \mbox{\Large$\subseteq$}
\bigg\{\substack{\text{Exactly symplectically bi-fillable} \\  \text{projectively Anosov flows}} \bigg\} \mbox{\Large$\subseteq$}
\bigg\{\substack{\text{Strongly symplectically bi-fillable} \\  \text{projectively Anosov flows}} \bigg\}$$ $$ \mbox{\Large$\subseteq$}
\bigg\{\substack{\text{Weakly symplectically bi-fillable} \\  \text{projectively Anosov flows}} \bigg\} \mbox{\Large$\subseteq$}
\bigg\{\substack{\text{Tight} \\  \text{projectively Anosov flows}} \bigg\} \mbox{\Large$\subset$}
\bigg\{ \text{projectively Anosov flows} \bigg\}.$$

In the above hierarchy, we first notice that there are no equivalent of {\em Stein fillable} contact manifolds for bi-contact structures (and projectively Anosov flows), since Stein fillings can only have connected boundaries \cite{stein}. 

The above hierarchy provokes a general line of questioning, which can help us understand Anosov dynamics, through the lens of contact and symplectic topology.

\begin{question}\label{qcontact}
What does each bi-contact topological layer imply about the dynamics of the corresponding class of projectively Anosov flows? What bi-contact topological layer is responsible for a given property of Anosov flows?
\end{question}

One important motivation to study the consequences of these contact geometric conditions on the dynamics of a projectively Anosov flow is the classification of Anosov flows, up to {\em orbit equivalence} (that is up to homemorphisms, mapping the orbits of one flow to another). In the light of Theorem~\ref{main1}, this can be split into two problems: topological classification of bi-contact structures (a contact topological problem) and understanding the bifurcations of projectively Anosov flows, under {\em bi-contact homotopy} (a dynamical question). Since many contact topological tools have been successfully developed in the past few decades to address the first problem, understanding the bifurcation problem can lead to important classification results. A very useful perspective is to notice that all the contact topological properties of the above hierarchy are preserved under bi-contact homotopy and therefore, are satisfied for any projectively Anosov flow which is homotopic to some Anosov flow (see Section~\ref{7} for related discussions).

Regarding Question~\ref{qcontact}, \cite{nos3} shows that there are no tight projectively Anosov flows on $\mathbb{S}^3$ (generalizing non-existence of Anosov flows) and \cite{otbi} gives a partial classification of {\em overtwisted projectively Anosov flows}, i.e. when both contact structures, forming the underlying bi-contact structure, are not tight (are {\em overtwisted}). More precisely, they show that overtwisted projectively Anosov flows exist, when there are no algebraic obstruction. Although, this is not a full classification, it is worth comparing this with purely algebraic classification of overtwisted contact structures, by Eliashberg \cite{elover,elmart}, reaffirming the parallels in the two theories. This also implies that the class of tight projectively Anosov flows is (considerably) smaller than general projectively Anosov flows. 

 We will observe the properness of the middle inclusion, by constructing examples on $\mathbb{T}^3$, while the properness of other inclusions remain unknown (Question~\ref{qprop}).

\begin{theorem}\label{proper1}
There are (trivially) weakly symplectically bi-fillable projectively Anosov flows, which are not strongly symplectically bi-fillable.
\end{theorem}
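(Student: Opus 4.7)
The plan is to produce projectively Anosov flows on $\mathbb{T}^3$ whose underlying bi-contact structure contains a tight contact structure known to be weakly but not strongly fillable, and then use a cobordism argument to rule out strong bi-fillability while constructing a weak bi-filling trivially.

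Starting from the Mitsumatsu-type family on $\mathbb{T}^3$, I work with contact forms $\alpha_\pm = \cos\theta_\pm(z)\,dx + \sin\theta_\pm(z)\,dy$ for monotone functions $\theta_\pm$ on $\mathbb{R}/\mathbb{Z}$. By tuning the total variations of $\theta_\pm$, one controls the isotopy classes of the underlying contact structures $\xi_\pm$ on $\mathbb{T}^3$; in particular, I arrange $\xi_-$ to be isotopic to the standard $\xi_1$ and $\xi_+$ to be isotopic to the Giroux--Kanda class $\xi_n$ with $n \geq 2$. Transversality of $\xi_\pm$ together with the dominated splitting on $TM/\langle X\rangle$ required by projective Anosovity follow from a direct computation once $\theta_+ - \theta_-$ is monotone with sufficiently large derivative.

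For the trivial weak bi-filling, I consider $W := \mathbb{T}^3 \times [0,1]$ with a symplectic form $\omega := \sigma + d\eta$, where $\sigma$ is a closed $2$-form on $\mathbb{T}^3$ positive on both $\xi_+$ and $\xi_-$ (such forms exist in abundance because $H^2(\mathbb{T}^3;\mathbb{R}) \neq 0$), and $\eta$ is a one-form on $W$ tuned so that the weak-filling boundary conditions hold at $\mathbb{T}^3 \times \{0\}$ and $\mathbb{T}^3 \times \{1\}$. Nondegeneracy and positivity on $\xi_\pm$ at the boundary reduce to routine checks, giving the promised trivial weak bi-filling.

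Now suppose for contradiction that a strong bi-filling $W'$ exists. It serves as a strong symplectic cobordism between $(\mathbb{T}^3, \xi_-)$ and $(\mathbb{T}^3, \xi_+)$. Since $\xi_-$ is isotopic to $\xi_1$, it admits the standard Liouville filling $D^2 \times \mathbb{T}^2$, which can be glued to the $\xi_-$-end of $W'$ to yield a strong symplectic filling of $(\mathbb{T}^3, \xi_+)$. But $\xi_+$ is isotopic to $\xi_n$ with $n \geq 2$, which is not strongly fillable by Eliashberg's theorem, giving the desired contradiction. The principal obstacle is the example construction itself: one must simultaneously place $\xi_+$ and $\xi_-$ in asymmetric isotopy classes while maintaining the monotonicity of $\theta_+ - \theta_-$ that the dominated splitting demands, and verify that the resulting flow is indeed projectively Anosov; the remaining contact-topological arguments are essentially standard once the example is in hand.
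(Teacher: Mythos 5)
Your proposal contains two genuine gaps: one in the explicit construction, and one in the non-fillability argument.

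\textbf{The construction does not produce a bi-contact structure.} If $\alpha_\pm = \cos\theta_\pm(z)\,dx + \sin\theta_\pm(z)\,dy$ are to be contact forms of opposite sign, one needs $\theta_+' < 0$ and $\theta_-' > 0$ (or vice versa) everywhere, forcing $\theta_+$ and $\theta_-$ to wind at least once around $S^1$ in opposite senses as $z$ traverses $\mathbb{R}/\mathbb{Z}$. Consequently $\theta_+ - \theta_-$ changes by at least $4\pi$ over one period, hence must cross multiples of $\pi$, and at those points $\ker\alpha_+ = \ker\alpha_-$, so transversality fails. Requiring $\theta_+ - \theta_-$ to be ``monotone with sufficiently large derivative'' only makes the winding larger, it does not rescue transversality. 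The paper sidesteps this by first isotoping the $\xi_n$'s to the forms $dz + \epsilon\{\cos(2\pi n z)\,dx - \sin(2\pi n z)\,dy\}$ of Example~\ref{excontact}~3); the nonzero $dz$-component forces any proportionality factor at a tangency to be $1$, which is then ruled out simply by taking $\epsilon \neq \epsilon'$. So in effect the paper realizes the transverse pair near the foliation $\ker dz$, not near the rotating plane fields you write down, and this is essential, not cosmetic.

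\textbf{A strong bi-filling is not a symplectic cobordism.} By definition, a strong bi-filling of $(\xi_-,\xi_+)$ is a strong symplectic filling $(W',\omega)$ of the \emph{disconnected} contact manifold $(M,\xi_+)\sqcup(-M,\xi_-)$: the Liouville vector field points outward along \emph{both} boundary components, i.e.\ both are convex. It is not a symplectic cobordism from $(\mathbb{T}^3,\xi_-)$ to $(\mathbb{T}^3,\xi_+)$, which would require a concave end at $\xi_-$. Hence you cannot glue the convex-boundary Liouville filling $D^2\times\mathbb{T}^2$ of $\xi_-$ onto the also-convex $\xi_-$-end of $W'$; there is no orientation-compatible gluing. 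What actually does the job, and what the paper invokes, is the theorem of Eliashberg and Etnyre that any strong (resp.\ weak) symplectic filling of a disconnected contact 3-manifold yields a strong (resp.\ weak) filling of each boundary component separately. Applied here, strong bi-fillability would force $\xi_+\cong\xi_n$ to be strongly fillable, contradicting Eliashberg's result on $\mathbb{T}^3$ as soon as $n\geq 2$. Note that once you use this theorem, the special choice ``$\xi_-$ isotopic to the standard one'' is unnecessary; one only needs that at least one of the two underlying contact structures fail to be strongly fillable.

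Your weak bi-filling step is essentially the paper's: $\mathbb{T}^3\times[0,1]\cong\mathbb{T}^2\times A$ carries a split symplectic form $\omega_1\oplus\omega_2$ that is positive on any plane field $C^0$-close to $\ker dz$, and after the isotopy both $\xi_\pm$ are of that type. This part is fine, though the general appeal to $H^2(\mathbb{T}^3;\mathbb{R})\neq 0$ is weaker than the concrete observation that $dx\wedge dy$ suffices.
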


As mentioned above, all the above contact topological conditions on projectively Anosov flows are purely topological, that is do not depend on the homotopy of any of the two underlying contact structures, with the exception of the first inclusion, i.e. Anosovity of a flow. It turns out that in the study of bi-contact structures (or equivalently, projectively Anosov flows), the local geometry is more subtle than contact structures, due to lack of theorems equivalent to the Darboux and Gray theorems. Drawing contrast between two notions of {\em bi-contact homotopy vs. isotopy},  we conclude that bi-contact homotopy is the natural notion from dynamical point of view (see Definition~\ref{bilocal} and the subsequent discussion). The relation between Anosovity and geometry of bi-contact structures is not well understood and we bring related discussions and questions in Section~\ref{7}.

\begin{question}
How does the Anosovity of a flow depend on the geometry of the underlying bi-contact structure, under bi-contact homotopy?
\end{question}

We show that at least for a fixed projectively Anosov flow, there is a unique {\em supporting} bi-contact structure, up to bi-contact homotopy.

\begin{theorem}\label{uniq1}
If $(\xi_-,\xi_+)$ and $(\xi_-',\xi_+')$ are two supporting bi-contact structures for a projectively Anosov flow, then they are homotopic through supporting bi-contact structures.
\end{theorem}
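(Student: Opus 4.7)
The plan is to reduce the homotopy problem to obstruction theory for sections of a bundle with contractible fibers. Since $\phi^t$ is projectively Anosov, the linearized flow on the rank-$2$ quotient $Q := TM/\langle X\rangle$ admits a dominated splitting $E^s \oplus E^u$, yielding two disjoint transverse sections of the projectivized bundle $\mathbb{P}(Q)\to M$. Their fiberwise complement consists of two open arcs; with orientations on $E^s, E^u$ fixed, these patch into open subbundles $A_-, A_+ \subset \mathbb{P}(Q)$, each of whose fibers is an open interval.

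The core step identifies supporting bi-contact structures with pairs of sections $(\ell_-, \ell_+) \in \Gamma(A_-) \times \Gamma(A_+)$. In one direction, any supporting bi-contact $(\xi_-,\xi_+)$ contains $X$ and hence descends along $\pi : TM \to Q$ to line fields $\ell_\pm := \xi_\pm / \langle X \rangle$; the refinement of Mitsumatsu's observation developed via expansion rates in Section~\ref{3} (and which underlies Theorem~\ref{main1}) shows that projective Anosovity together with the sign of the contact form forces $\ell_\pm$ to be a section of the prescribed subbundle $A_\pm$. Conversely, any such pair of sections lifts to plane fields $\pi^{-1}(\ell_\pm)$, which automatically contain $X$, intersect transversely (since $\ell_+ \neq \ell_-$ fiberwise), and are of the correct contact sign — the arc occupied by $\ell_\pm$ controls the rotation of $\xi_\pm$ under the linearized flow, and hence the sign of $\alpha_\pm \wedge d\alpha_\pm$.

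Granting this identification, the theorem reduces to path-connectedness of $\Gamma(A_-) \times \Gamma(A_+)$. Each fiber of $A_\pm$ is an open interval, hence contractible, so a continuous fiberwise parameterization by $(0,1)$ permits pointwise linear interpolation between any two sections. Applying this to each of the $\pm$ components of the two given supporting bi-contact structures, then lifting through $\pi^{-1}$, produces a continuous path of supporting bi-contact structures joining them.

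The main obstacle is the identification step: pinning down exactly that ``$\ell_\pm$ is a section of $A_\pm$'' is equivalent to ``$\pi^{-1}(\ell_\pm)$ is $\pm$-contact and gives a supporting bi-contact structure.'' This requires the same mechanism as Theorem~\ref{main1} and must be executed carefully in the general $C^1$ setting, where the invariant bundles $E^s, E^u$ need not be more regular than $C^0$ — this is exactly the scenario for which the approximation techniques of Section~\ref{4} are designed, and they should supply the needed regularity to lift the interpolation back to a bona fide family of $C^1$ contact plane fields. With this identification in hand, the remainder of the argument is soft topology of section spaces.
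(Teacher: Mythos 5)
There is a genuine gap, and it lies precisely in the step you flag as ``the main obstacle'' — but the obstacle is worse than a matter of careful execution: the claimed identification is false. Lying in the correct open arc $A_+$ of $\mathbb{P}(TM/\langle X\rangle)$ (i.e. being ``dynamically positive'' at every point, in the paper's language) is a pointwise condition on the \emph{position} of the plane field $\pi^{-1}(\ell_+)$ relative to $\pi^{-1}(E^s)$ and $\pi^{-1}(E^u)$, while contactness is a condition on the \emph{rate of rotation} of that plane field along $X$: as in Remark~\ref{geomca}, $\xi_+$ is positive contact iff $X\cdot\theta_{\xi_+}<0$ everywhere, where $\theta_{\xi_+}$ measures the angle of $\xi_+\cap\eta$. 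A plane field can sit strictly inside the positive quadrant at every point and yet have $X\cdot\theta=0$ somewhere, and then $\pi^{-1}(\ell_+)$ is not a contact structure — such examples are easy to produce by a compactly supported perturbation of any genuine $\xi_+$ that stays inside the quadrant. So $\Gamma(A_-)\times\Gamma(A_+)$ is strictly larger than the set of supporting bi-contact structures, and a path in the section space need not lift to a path of bi-contact structures. The ``soft topology of section spaces'' therefore does not close the argument.

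The paper's proof faces exactly this issue and resolves it by hard computation rather than soft topology. It first observes (Remark~\ref{geomca}) that one direction of your identification \emph{is} true — any positive contact structure coming from a supporting bi-contact structure is dynamically positive everywhere — and then, in Lemma~\ref{linlem}, it shows that the set of supporting positive contact structures that are dynamically positive everywhere is convex under linear interpolation of contact forms. Writing $\alpha_+=\tfrac12(\alpha_u-\alpha_s)$, $\alpha_+'=\tfrac12(f\alpha_u-\alpha_s)$ with $f>0$ (dynamical positivity), one finds
$$(\alpha_t\wedge d\alpha_t)(e_s,e_u,X)=t^2(fr_u-fr_s+X\cdot f)+(1-t)^2(r_u-r_s)+t(1-t)\bigl(r_u-r_s+fr_u-fr_s+X\cdot f\bigr),$$
which is positive because $r_u-r_s>0$ (projective Anosovity, Proposition~\ref{rateca}), $f>0$, and $fr_u-fr_s+X\cdot f>0$ (contactness of $\alpha_+'$). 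Note this crucially uses that both endpoints are already contact — not merely sections of $A_+$ — so it proves convexity of a subset of the section space rather than path-connectedness of all of it. If you replace your claimed equivalence with this one-directional inclusion and supply the above computation, your linear-interpolation strategy does succeed, and is then the same as the paper's.
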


Furthermore, for an Anosov flow, the construction of Liouville pairs in Theorem~\ref{main1} can be based on an arbitrary choice of bi-contact structure.

\begin{theorem}\label{main11}
Suppose $\phi^t$ is an Anosov flow on a 3-manifold $M$, generated by a $C^1$ vector field $X$ and let $(\xi_-,\xi_+)$ be any supporting bi-contact structure for $X$. Then, for any choice of orientations for $\xi_-$ and $\xi_+$, there exist contact forms $\alpha_-$ and $\alpha_+$, such that $\ker{\alpha_-}=\xi_-$ and $\ker{\alpha_+}=\xi_+$, respecting the orientations, and $(\alpha_-,\alpha_+)$ is a Liouville pair.
\end{theorem}

We also use well known facts in Anosov dynamics, as well as the underlying techniques of Theorem~\ref{uniq1}, to derive a family of uniqueness results for the underlying contact structures, reducing the study of the supporting bi-contact structure to only one of the supporting contact structures.

\begin{theorem}
If $M$ is atoroidal and $(\xi_-,\xi_+)$ a supporting bi-contact structure for the Anosov vector field $X$ on $M$, then for any supporting positive contact structure $\xi$, $\xi$ is isotopic to $\xi_+$, through supporting contact structures.
\end{theorem}

\begin{theorem}
Let $X$ be a skewed $\mathbb{R}$-covered Anosov vector field, supported by the bi-contact structure $(\xi_-,\xi_+)$ on $M$, and let $\xi$ be any supporting positive contact structure. Then $\xi$ is isotopic to $\xi_+$, through supporting contact structures.
\end{theorem}

\begin{theorem}
Let $X$ be the suspension of an Anosov diffeomorphism of torus, supported by the bi-contact structure $(\xi_-,\xi_+)$, and $\xi$ a positive supporting contact structure. Then, $\xi$ is isotopic through supporting bi-contact structures to $\xi_+$, if and only if, $\xi$ is strongly symplectically fillable.
\end{theorem}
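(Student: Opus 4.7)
The plan is to prove each direction of the biconditional separately, combining Theorems~\ref{main1}, \ref{main11}, and \ref{uniq1} with the classification of strongly symplectically fillable contact structures on hyperbolic torus bundles. For the forward direction, I would assume $\xi$ is connected to $\xi_+$ via a family $(\xi_-^t,\xi_+^t)$ of supporting bi-contact structures with $\xi_+^0=\xi_+$ and $\xi_+^1=\xi$. Since $X$ is Anosov, Theorem~\ref{main11} produces, for each $t$, contact forms $\alpha_\pm^t$ such that $(\alpha_-^t,\alpha_+^t)$ is a Liouville pair, so each $\xi_+^t$ is exactly, hence strongly, symplectically fillable. Gray's theorem then supplies a contactomorphism carrying $\xi_+$ to $\xi$, and strong fillability is preserved under contactomorphism, completing this direction.

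For the backward direction, I would use that $M$ is the mapping torus of a hyperbolic $A\in SL(2,\mathbb{Z})$. The classification of tight contact structures on such torus bundles (Giroux, Honda), together with the known fillability obstructions for these manifolds (Ding--Geiges, Ghiggini, Wendl, Van Horn-Morris), implies that up to contact isotopy there is essentially a unique positive strongly symplectically fillable contact structure on $M$. Since $\xi_+$ is itself strongly fillable by Theorem~\ref{main1}, any strongly fillable positive $\xi$ is contact isotopic to $\xi_+$. To upgrade this contact isotopy to one through supporting bi-contact structures, I would first produce a supporting bi-contact structure $(\xi_-',\xi)$ for $X$ by transporting $\xi_-$ along the isotopy from $\xi_+$ to $\xi$, using the approximation techniques of Section~\ref{4} and exploiting the rigid algebraic form of the Anosov splitting for a toral suspension. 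Once $(\xi_-',\xi)$ is in hand, Theorem~\ref{uniq1} furnishes a supporting bi-contact homotopy from $(\xi_-,\xi_+)$ to $(\xi_-',\xi)$, which restricts to the desired isotopy on the positive factor.

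The hardest step will be this last one: promoting the pure contact isotopy $\xi\simeq\xi_+$ to a deformation compatible with the flow, i.e., through contact structures that remain part of a supporting bi-contact structure for $X$. This is delicate because bi-contact isotopy is strictly finer than contact isotopy, and $X$ must stay tangent throughout the deformation. For the algebraic Anosov flows considered here, the homogeneous structure of $M$ and the linearity of the Anosov splitting in the fiber direction should make this transport problem tractable, and the approximation framework of Section~\ref{4}, which is designed precisely to pass between the dynamical (Anosov) and the contact-geometric (bi-contact) pictures, appears to be the right technical tool to carry it out.
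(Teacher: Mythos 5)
Your forward direction is fine and matches the spirit of the paper: a bi-contact homotopy gives, by Gray's theorem, an ambient contact isotopy from $\xi_+$ to $\xi$, and since $\xi_+$ is strongly (even exactly) symplectically fillable by Theorem~\ref{main1}/\ref{main11}, so is $\xi$. (You can also just note that all the $\xi_+^t$ support a bi-contact structure for $X$ and are therefore fillable directly.)

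The backward direction has a genuine gap, and it is precisely the step you flag as ``the hardest.'' Knowing that $\xi$ is \emph{contact isotopic} to $\xi_+$ (via classification of fillable contact structures on the torus bundle) tells you nothing about how $\xi$ sits \emph{relative to the fixed flow} $X$: the ambient isotopy realizing the contact isotopy has no reason to preserve $X$, so ``transporting $\xi_-$ along the isotopy'' does not produce a negative contact structure transverse to $\xi$ that still contains $X$ in the intersection. In fact the obstruction is concrete: a supporting positive contact structure $\xi$ can fail to be dynamically positive everywhere for $X$, and in that case it cannot be part of any supporting bi-contact structure, so no amount of transport or approximation will produce $(\xi_-',\xi)$. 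What must be shown is precisely that strong fillability forces $\xi$ to be dynamically positive for $X$ everywhere. The paper proves this via a dichotomy, not via classification plus upgrade: if $\xi$ is dynamically positive everywhere, Lemma~\ref{linlem} gives the isotopy through supporting contact structures; otherwise, Lemma~\ref{dynneg} produces transverse incompressible tori bounding the dynamically negative locus, and counting the rotation of $\xi$ across these tori (there are at least two of them) exhibits a full Giroux torsion domain inside $(M,\xi)$, so by the Gay/Massot--Niederkr\"uger--Wendl obstruction $\xi$ is not strongly fillable. The contrapositive is the backward implication. Your proposal omits the crucial dynamical/geometric input (Lemma~\ref{dynneg} and the torsion count) and tries to substitute for it an ``upgrade'' step that, as written, does not go through.

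A secondary point: even your appeal to the classification of fillable contact structures on hyperbolic torus bundles is left at the level of names rather than a citable statement, and is not actually needed once the torsion argument is in place — that argument shows directly that a non-dynamically-positive $\xi$ fails strong fillability, without invoking any global classification.
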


On a separate note, we also use the ideas developed in Section~\ref{3} and proof of Theorem~\ref{main1} to give a characterization of Anosovity, based on the {\em Reeb vector fields}, associated to the underlying contact structures. Reeb vector fields play a very important role in contact geometry and Hamiltonian mechanics and since early 90s, their deep relation to the topology of contact manifolds has been explored.

\begin{theorem}\label{reeb1}
Let $X$ be a projectively Anosov vector field on $M$. Then, the followings are equivalent:

(1) $X$ is Anosov;

(2) There exists a supporting bi-contact structure $(\xi_-,\xi_+)$, such that $\xi_+$ admits a Reeb vector field, which is dynamically negative everywhere;

(3) There exists a supporting bi-contact structure $(\xi_-,\xi_+)$, such that $\xi_-$ admits a Reeb vector field, which is dynamically positive everywhere.
\end{theorem}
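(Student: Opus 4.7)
The plan is to prove the cycle of implications $(1)\Rightarrow (2)$, $(2)\Rightarrow (1)$, together with the symmetric $(1)\Leftrightarrow (3)$; the equivalence $(2)\Leftrightarrow (3)$ then follows, and the passage between (2) and (3) reflects exchanging $\xi_+\leftrightarrow\xi_-$ and reversing the time orientation of $X$.

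For $(1)\Rightarrow(2)$, I start from an Anosov vector field $X$ and any supporting bi-contact structure $(\xi_-,\xi_+)$. Theorem~\ref{main11} supplies contact forms $\alpha_-,\alpha_+$ for which both $(\alpha_-,\alpha_+)$ and $(-\alpha_-,\alpha_+)$ are Liouville pairs. I would decompose the Reeb vector field $R_+$ of $\alpha_+$ in a frame adapted to the splitting, writing $\xi_+=\langle X,Y\rangle$ with $Y$ a transverse complement to $\langle X\rangle$ inside $\xi_+$. The contact conditions $\alpha_+(R_+)=1$ and $d\alpha_+(R_+,\cdot)=0$ determine the coefficients of $R_+$ in this frame in terms of $\alpha_+$, $d\alpha_+$, and the frame itself, while the two Liouville pair conditions force specific pointwise sign inequalities on those coefficients. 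Reading these inequalities through the expansion-rate framework of Section~\ref{3} should identify $R_+$ as dynamically negative at every point.

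For $(2)\Rightarrow(1)$, assume $X$ is projectively Anosov and that some supporting $\xi_+$ admits a Reeb vector field $R_+$ dynamically negative everywhere. The projectively Anosov hypothesis gives a dominated splitting on $TM/\langle X\rangle$ with $C^0$ expansion rates. Performing the same frame decomposition as above, the contact defining equations for $\alpha_+$ translate the dynamical negativity of $R_+$ into the same pointwise estimate on these expansion rates that, in Section~\ref{3}, sharpens dominated splitting to hyperbolic splitting. Compactness of $M$ upgrades the pointwise inequality to uniform exponential contraction/expansion, yielding that $X$ is Anosov. The equivalence $(1)\Leftrightarrow(3)$ is handled identically with $\alpha_-,\xi_-$ in place of $\alpha_+,\xi_+$ and opposite signs throughout, or equivalently by applying $(1)\Leftrightarrow(2)$ to the time-reversed flow.

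The principal obstacle is the translation between algebraic, pointwise Reeb data and the dynamical, asymptotic content of Anosovity: the Reeb flow of $\alpha_+$ bears no \emph{a priori} relation to the flow of $X$, so the entire bridge must be built from the algebraic identities of the bi-contact structure together with the expansion-rate machinery of Section~\ref{3}. A secondary technical difficulty is that in the general $C^1$ Anosov setting the invariant bundles are only $C^0$, while the contact forms produced by Theorem~\ref{main1} are at best $C^1$; controlling this regularity mismatch in the $(2)\Rightarrow(1)$ direction will require invoking the approximation techniques developed in Section~\ref{4}.
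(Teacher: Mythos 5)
Your $(2)\Rightarrow(1)$ sketch tracks the paper's argument closely. The paper writes $\alpha_+=\tfrac12(\alpha_u-\alpha_s)$ with $\alpha_u$ annihilating $\pi^{-1}(E^s)$, $\alpha_s$ annihilating $\pi^{-1}(E^u)$, normalizes a metric so that $\alpha_u(e_u)=\alpha_s(e_s)=1$, and the single computation
\begin{equation*}
d\alpha_+\bigl(X,\,-r_se_u-r_ue_s\bigr)=0
\end{equation*}
(using $\mathcal{L}_Xe_u=-r_ue_u+q_u^\eta X$, $\mathcal{L}_Xe_s=-r_se_s+q_s^\eta X$ from Proposition~\ref{cageoformula2}) locates $R_{\alpha_+}$ in $\langle X,\,-r_se_u-r_ue_s\rangle$. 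Dynamical negativity then forces $r_s$ and $r_u$ to have opposite signs, and Proposition~\ref{rateanosov} together with Proposition~1.1 of \cite{hyplift} gives Anosovity. Your observation that the regularity mismatch must be handled via the Section~\ref{4} approximations and openness of both conditions is also how the paper deals with the $C^0$ bundles.

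Your $(1)\Rightarrow(2)$ direction, however, has a genuine gap. You propose to take an \emph{arbitrary} supporting bi-contact structure, apply Theorem~\ref{main11} to obtain Liouville-pair contact forms $\alpha_\pm$, and read off from the Liouville conditions that $R_{\alpha_+}$ is dynamically negative. But the Reeb vector field is not determined by the contact structure $\xi_+$ alone: replacing $\alpha_+$ by $f\alpha_+$ rotates $R_{\alpha_+}$ within the transverse cone, and the term $X\cdot f$ enters the Reeb computation on an equal footing with $r_s,r_u$. Concretely, if you expand $\alpha_+=a\alpha_u+b\alpha_s$ with variable coefficients, then $d\alpha_+(X,e_u)=X\cdot a+r_ua$ and $d\alpha_+(X,e_s)=X\cdot b+r_sb$, so the Reeb direction depends on $X\cdot a$, $X\cdot b$. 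The Liouville pair conditions, as extracted in the converse direction of Theorem~\ref{main}, pin down only the signs of $r_s,r_u$; they do not by themselves control these derivative-of-scaling terms, so dynamical negativity of $R_{\alpha_+}$ for the $\alpha_+$ supplied by Theorem~\ref{main11} is not automatic.

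The paper avoids this entirely by \emph{constructing} the bi-contact structure rather than starting from an arbitrary one — Theorem~\ref{main11} is never invoked. It takes the metric realizing $r_s<0<r_u$, sets $\alpha_u(e_u)=\alpha_s(e_s)=1$, and defines $\alpha_+:=\tfrac12(\alpha_u-\alpha_s)$, the ``bisector'' form. With this choice $\alpha_+(e_u)$ and $\alpha_+(e_s)$ are constant, so the troublesome $X\cdot a,X\cdot b$ terms vanish and the identity $d\alpha_+(X,-r_se_u-r_ue_s)=0$ drops out; since $r_s<0<r_u$, the vector $-r_se_u-r_ue_s$ has $e_u$- and $e_s$-components of opposite sign and is therefore dynamically negative. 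As statement~(2) is an existence statement, producing this one bi-contact structure and this one contact form is all that is required, and it is both shorter and logically lighter than routing through Theorem~\ref{main11}.
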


We use the above characterization to show:

\begin{theorem}
Let $(\xi_-,\xi_+)$ be a supporting bi-contact structure for an Anosov flow. Then, $\xi_-$ and $\xi_+$ are {\em hypertight}. That is, they admit contact forms, whose associated Reeb flows do not have any contractible periodic orbit.
\end{theorem}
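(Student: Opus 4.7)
The plan is to extract hypertight contact forms for $\xi_\pm$ directly from Theorem~\ref{reeb1} and then combine the resulting transversality with Novikov's theorem applied to the weak invariant foliations of the Anosov flow. Since the underlying projectively Anosov flow is in fact Anosov, Theorem~\ref{reeb1} provides a contact form $\alpha_+$ for $\xi_+$ whose Reeb vector field $R_+$ is dynamically negative at every point of $M$, and, symmetrically, a contact form $\alpha_-$ for $\xi_-$ whose Reeb vector field $R_-$ is dynamically positive everywhere. I claim these forms witness hypertightness; by symmetry, it suffices to treat $\alpha_+$ and rule out contractible periodic orbits of $R_+$.

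Next I would translate the dynamical sign condition on $R_+$ into geometric transversality with an invariant foliation. By the discussion of expansion rates developed in Section~\ref{3}, a vector field that is dynamically negative everywhere must have a nonzero component along the strong unstable direction $E^u$ at each point; equivalently, $R_+$ is everywhere transverse to the weak stable bundle $E^{ws}=E^s\oplus\langle X\rangle$, and hence transverse to the weak stable foliation $\mathcal{F}^{ws}$. In particular, any periodic orbit of $R_+$ is a closed transversal to $\mathcal{F}^{ws}$.

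Finally I would invoke Novikov's theorem. The weak stable foliation of an Anosov flow on a closed orientable 3-manifold is a classical example of a Reebless (in fact taut) foliation: a Reeb component in $\mathcal{F}^{ws}$ would force a torus leaf bounding a solid torus, which is incompatible with the Anosov structure, for instance by Plante--Verjovsky type arguments, or directly from the existence of the transverse strong unstable foliation. Since $\mathcal{F}^{ws}$ is Reebless, Novikov's theorem on closed orientable 3-manifolds ensures that every closed transversal to $\mathcal{F}^{ws}$ is essential, so no periodic orbit of $R_+$ can be null-homotopic and $\alpha_+$ is hypertight; the argument for $\alpha_-$ is identical using $\mathcal{F}^{wu}$ in place of $\mathcal{F}^{ws}$. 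The step I expect to be the most delicate is the transversality claim above: one must verify that \emph{dynamically negative everywhere}, as produced by Theorem~\ref{reeb1}, genuinely forces $R_+$ to have a nonzero $E^u$-component at every point, which is where the expansion-rate machinery of Section~\ref{3} enters essentially. Once transversality is established, the remainder is a routine application of Reeblessness of the weak foliations together with Novikov.
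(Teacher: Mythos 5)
Your proposal coincides with the paper's proof in its first two steps: invoke Theorem~\ref{reebchar} to obtain a contact form for $\xi_+$ whose Reeb vector field is dynamically negative everywhere, then observe that such a vector field is transverse to the weak stable and unstable foliations of the Anosov flow. (That second step is immediate rather than delicate, as you worried: by definition a dynamically negative vector lies strictly in the interior of the quadrants cut out by $\pi^{-1}(E^s)$ and $\pi^{-1}(E^u)$ in Figure~1(b), so it automatically misses both weak bundles.) You diverge only at the final step, where one must rule out contractible closed transversals. The paper cites Haefliger's theorem \cite{haef}: a null-homotopic closed transversal to a codimension-one foliation would force a leaf loop whose germinal holonomy is trivial on one side and nontrivial on the other, which is impossible for an Anosov weak (un)stable foliation because the holonomy is contracting (respectively expanding) on both sides of every leaf; this is the standard argument (cf.\ Lemma~3.1 of \cite{topanosov}). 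You instead first claim Reeblessness of the weak foliations and then apply Novikov's theorem. This alternative is also valid and buys you nothing extra here; in fact it requires slightly heavier machinery and introduces an intermediate claim (Reeblessness) that itself needs a citation, whereas the Haefliger route reaches the contradiction directly from the holonomy structure. Whichever route you take, keep in mind that the weak foliations of a $C^1$ Anosov flow are in general only $C^0$ with $C^1$ leaves, so the relevant foliation-theoretic theorem must be applied in its $C^0$ form; that is available in the literature but is worth flagging explicitly.
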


Consequently, the classical results of Hofer, et al in Reeb dynamics \cite{hofermain,hwz,wendl} would imply the followings:

\begin{corollary}\label{correeb}
Let $(\xi_-,\xi_+)$ be a supporting bi-contact structure for an Anosov flow. Then,

(1) $\xi_-$ and $\xi_+$ are universally tight;

(2) $M$ is irreducible;

(3) there are no exact symplectic cobordisms from $(M,\xi_+)$ or $(-M,\xi_-)$ to $(\mathbb{S}^3,\xi_{std})$.
\end{corollary}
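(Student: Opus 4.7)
The plan is to derive each of the three conclusions directly from the hypertightness of $\xi_\pm$ proved in the preceding theorem, feeding this input into the cited results from Reeb dynamics. Throughout, let $\alpha_\pm$ denote hypertight contact forms with $\ker \alpha_\pm = \xi_\pm$. For part (1), I invoke Hofer's theorem \cite{hofermain} that any closed contact 3-manifold carrying a Reeb vector field with no contractible periodic orbits is tight. To upgrade tightness to universal tightness, I pull $\alpha_\pm$ back along any covering $p:\tilde M \to M$. The Reeb field lifts to the Reeb field of $p^*\alpha_\pm$, and any contractible periodic orbit of the lifted flow projects to a contractible periodic orbit of $R_{\alpha_\pm}$ on $M$; hence $p^*\alpha_\pm$ is itself hypertight, so Hofer's theorem applied to $\tilde M$ shows that the lifted contact structure is tight. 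Applying this to the universal cover yields universal tightness of $\xi_\pm$.

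For part (2), I again use Hofer's theorem \cite{hofermain}: if $\pi_2(M) \neq 0$, then every Reeb vector field on $(M, \xi_\pm)$ admits a contractible periodic orbit (produced by bubbling of holomorphic spheres representing a non-trivial class). Hypertightness of $\xi_\pm$ therefore forces $\pi_2(M) = 0$. Combined with the Kneser--Milnor prime decomposition and the fact that the only closed orientable prime 3-manifold with nontrivial $\pi_2$ is $\mathbb{S}^2 \times \mathbb{S}^1$, this rules out any essential embedded 2-sphere and forces $M$ to be irreducible.

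For part (3), I appeal to the SFT-style cobordism obstruction of Wendl \cite{wendl}, used together with the finite energy foliation of $(\mathbb{S}^3, \xi_{std})$ by holomorphic planes of Hofer--Wysocki--Zehnder \cite{hwz}. If an exact symplectic cobordism $(W, \lambda)$ from $(M, \xi_+)$ to $(\mathbb{S}^3, \xi_{std})$ existed, capping the convex end with the standard symplectic ball would produce an exact filling of $(M, \xi_+)$ carrying, after neck-stretching along $\mathbb{S}^3$, a moduli space of finite energy holomorphic curves inherited from the HWZ foliation of the cap. Standard SFT compactness applied to this moduli space, along the lines of \cite{wendl}, forces it to break into a configuration whose bottom level contains a contractible periodic Reeb orbit on $(M, \xi_+)$, contradicting hypertightness. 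The symmetric argument rules out an exact cobordism from $(-M, \xi_-)$ to $(\mathbb{S}^3, \xi_{std})$. I expect this last step to be the main obstacle: setting up the neck-stretching and moduli space compactness carefully enough to guarantee a contractible orbit on the negative end is the nontrivial dynamical content, whereas parts (1) and (2) are essentially direct corollaries of Hofer's theorem.
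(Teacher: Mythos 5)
Your proposal is correct and takes essentially the same route as the paper: derive hypertightness of $\xi_\pm$ from the preceding theorem, then feed it into the cited Reeb-dynamics results of Hofer, Hofer--Wysocki--Zehnder and Wendl. The paper essentially states the corollary as a direct citation to those works (noting in addition that the hypertightness argument lifts to covers since the Anosov flow lifts), so your fleshed-out sketch of parts (1)--(3) is precisely the content that citation suppresses; your use of prime decomposition to pass from $\pi_2(M)=0$ to irreducibility in (2), and the neck-stretching sketch in (3), are the standard ways to unpack those references.
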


We note that (1) in Corollary~\ref{correeb} is also concluded from Theorem~\ref{main1}, as well as the perturbation theory of $C^0$ foliations \cite{bowden,kazez}, and (2) is a classical fact from Anosov dynamics, and we are giving new Reeb dynamical proofs for them. On the other hand, part (3) is a symplectic improvement of the main result of ~\cite{nos3} and non-existence of Anosov flows on $\mathbb{S}^3$.

\vskip0.25cm

In Section~\ref{2}, we review some background from contact and symplectic topology, which provides context for this paper. In Section~\ref{3}, we discuss (projective) Anosovity of flows, with emphasis on the geometry of the expansion in the stable and unstable direction, which lies in the heart of Theorem~\ref{main1}. We prove Theorem~\ref{main1} and \ref{main11} in Section~\ref{4}. Section~\ref{5} is devoted to various uniqueness theorems for the underlying (bi)-contact structures of (projectively) Anosov flows. In Section~\ref{6}, we prove Theorem~\ref{reeb1}, as a consequence of ideas developed in Section~\ref{3} and the proof of Theorem~\ref{main1}, as well as discuss its contact topological consequences. Finally, Section~\ref{7} is devoted to setting up the contact topological framework for the systematic use of contact and symplectic topological methods in Anosov dynamics. The necessary remarks and definitions are discussed there, as well as a handful of related open problems and conjectures. A proof of Theorem~\ref{proper1} is also given in this section.

\vskip0.5cm
\textbf{ACKNOWLEDGEMENT:} I want to thank my advisor, John Etnyre, for continuous support and discussions along the way, as well as suggesting Example~\ref{exbicontact} and helping with the details of the proof of Theorem~\ref{john}. I am grateful to Thomas Massoni for pointing out a mistake in an earlier version of Theorem~\ref{main11}. I am also very thankful to Rafael de la Llave, Gabriel Paternain, Jonathan Bowden, Thomas Barthelmé and Albert Fathi for helpful discussions and suggested revisions. The author was partially supported by the NSF grants DMS-1608684 and DMS-1906414.

\section{Background On Contact And Symplectic Topology}\label{2}

In this section, we review some basic notions from contact and symplectic geometry and topology, which will be useful in the rest of the paper. We refer the reader to \cite{geiges} for more on these topics.

\begin{definition}
We call a $C^1$ 1-form $\alpha$ a {\em contact form} on $M$, if
$\alpha \wedge d\alpha$ is a non-vanishing volume form on $M$.
If $\alpha \wedge d\alpha>0$ (compared to the orientation on $M$), we call $\alpha$ a {\em positive} contact form and otherwise, a {\em negative} one. We call $\xi:=\ker{\alpha}$ a (positive or negative) {\em contact structure} on $M$. Moreover, we call the pair $(M,\xi)$ a {\em contact manifold}. When not mentioned, we assume the contact structures to be positive.
\end{definition}

Recall that by Frobenius theorem, the contact structure $\xi$ in the above definition is a $C^1$ {\em coorientable maximally non-integrable} plane field on $M$.

\begin{example}\label{excontact}
Some examples of contact structures are:

1) The 1-form $\alpha_{std}=dz-y\, dx$ is a positive contact form on $\mathbb{R}^3$. We call $\xi_{std}=ker{\alpha_{std}}$ the {\em standard} positive contact structure on $\mathbb{R}^3$. Similarly, $\ker{(dz+y\, dx)}$ is the standard negative contact structure on $\mathbb{R}^3$.

2) Consider $\mathbb{C}^2$ equipped with $J$, the standard complex structure on $T\mathbb{C}^2$ and let $\mathbb{S}^3$ be the unit sphere in $\mathbb{C}^2$. It can be seen that the plane field $\xi_{std}:=T\mathbb{S}^3 \cap J T\mathbb{S}^3$ is a contact structure on $\mathbb{S}^3$, referred to as {\em standard} contact structure on $\mathbb{S}^3$. Alternatively, $\xi_{std}$ can be defined as the unique complex line tangent to the unit sphere. It is helpful top note that this contact structure is the one point compactification of the standard contact structure on $\mathbb{R}^3$. Similarly, we can construct a negative contact structure on $\mathbb{S}^3$, by considering the conjugate of the complex structure $J$.

3) Consider $\mathbb{T}^3\simeq \mathbb{R}^3/\mathbb{Z}^3$. It can be seem that the plane fields $\xi_n=\ker{\{\cos{2\pi nz}dx-\sin{2\pi nz}dy\}}$ are positive and negative contact structures on $\mathbb{T}^3$, for integers $n>0$ and $n<0$, respectively.
\end{example}
 
{\em Gray's theorem} states that members of any $C^1$-family of contact structures are isotopic as contact structures and according to {\em Darboux theorem}, all contact structures locally look the same. i.e. around each point in a contact manifold $(M,\xi)$, there exists a neighborhood $U$ and a diffeomorphism of $U$ to $\mathbb{R}^3$, mapping $\xi$ to the standard contact structure (positive or negative one, depending on whether $\xi$ is positive or negative) on $\mathbb{R}^3$. While this means that contact structures lack local invariants, it turns out understanding their topological properties is more subtle and interesting. The most important global feature of contact structures is {\em tightness}, introduced by Eliashberg \cite{elover}, and determining whether a given contact is tight, as well as classifying such contact manifolds, are a prominent theme in contact topology.

 \begin{definition}
 The contact manifold $(M,\xi)$ is called {\em overtwisted}, if $M$ contains an embedded disk that is tangent to $\xi$ along its boundary. Otherwise, $(M,\xi)$ is called {\em tight}. Moreover, $\xi$ universally tight, if its lift to the universal cover of $M$ is tight as well.
 \end{definition}
 
 The significance of the above dichotomy is the classification of overtwisted contact structures by Y. Eliashberg \cite{elmart,elover}. He showed overtwisted contact structures, up to isotopy, are in one to one correspondence with plane fields, up to homotopy (in particular, they always exist). This means overtwisted contact structures do not carry more topological information than plane fields. On the other hand, tight contact structures reveal deeper information about their underlying manifold, and are harder to find, understand and classify.

\begin{remark}
It can be shown that all the contact structures in Example~\ref{excontact} are (universally) tight. In fact, they are the only tight contact structures, up to isotopy, on their underlying manifolds. Note, that all those manifolds admit overtwisted contact structures as well.
\end{remark}

It turns out that one can determine tightness of a contact manifold is based on its relation to four dimensional {\em symplectic topology}, the even dimensional sibling of contact topology.

\begin{definition}
Let $X$ be an oriented 4-manifold. We call a 2-form $\omega$ on $X$ a symplectic form, if it is closed and $\omega \wedge \omega>0$. The pair $(X,\omega)$ is called a symplectic manifold.
\end{definition}

The 2-form form $\omega_{std}=d\left( x_1dy_1+x_2dy_2\right)=dx_1\wedge dy_1+dx_2\wedge dy_2$ is a symplectic form defined on $\mathbb{R}^4$ with coordinates $(x_1,y_1,x_2,y_2)$ (known as the {\em standard} symplectic form), and the Darboux theorem in symplectic geometry states that all symplectic structures are locally equivalent, up to {\em symplectic deformation}. Using the theory of $J$-holomorphic curves, Gromov and Eliashberg proved \cite{grfilltight,elfilltight} that a contact structure is tight, when it is {\em symplectically fillable}, even in the weakest sense.

\begin{definition}
Let $(M,\xi)$ be a contact manifold. We call the symplectic manifold $(X,\omega)$ a {\em weak symplectic filling} for $(M,\xi)$, if $\partial X=M$ as oriented manifolds and $\omega |_\xi>0$. We call $(X,\omega)$ a {\em strong symplectic filling}, if moreover, $\omega=d\alpha$ in a neighborhood of $M=\partial X$, for some 1-form $\alpha$, such that $\alpha|_{TM}$ is a contact form for $\xi$. Finally, we call $(X,\omega)$ an {\em exact symplectic filling} for $(M,\xi)$, if such 1-form $\alpha$ can be defined on all of $X$. We call such $(M,\xi)$ {\em (weakly, strongly or exactly) symplectically fillable}.
\end{definition}

\begin{theorem}\label{gromov}(Gromov 85 \cite{grfilltight}, Eliashberg 90 \cite{elfilltight})
If $(M,\xi)$ is (weakly, strongly or exactly) symplectically fillable, then it is tight.
\end{theorem}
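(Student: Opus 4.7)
The plan is to argue by contradiction in the most general case: assume $(X,\omega)$ is a weak symplectic filling of $(M,\xi)$ (which subsumes the strong and exact settings) and that $(M,\xi)$ admits an overtwisted disk $D_0$. The goal is to exhibit a family of $J$-holomorphic disks whose boundary behavior is incompatible with Gromov compactness.

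First, I would normalize $D_0$: after a small isotopy fixing $\partial D_0$, arrange the characteristic foliation $TD_0\cap\xi$ to have a single positive elliptic singularity $e$ at the center, with $\partial D_0$ as the only limit cycle. Call the resulting disk $D$. Next, choose an almost complex structure $J$ on $X$ that is tamed by $\omega$, makes $\xi$ $J$-invariant along $M$, and is standard in a Darboux chart around $e$; in the weak case, first modify $\omega$ in a collar of $M$ so that it becomes exact there, which is possible in dimension four because $H^2(M;\mathbb{R})$ pairs appropriately with $\omega|_{\xi}>0$. With respect to such $J$, the disk $D$ is totally real away from $e$. Bishop's theorem then seeds a one-parameter family $u_s\colon(\mathbb{D}^2,\partial\mathbb{D}^2)\to(X,D)$, $s\in[0,\epsilon)$, of embedded $J$-holomorphic disks shrinking to $e$ as $s\to 0$.

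Let $\mathcal{M}$ be the connected component of the moduli space of $J$-holomorphic disks with boundary on $D$ containing this Bishop family. The relevant disks have Maslov index $2$ and pass once transversely through $e$, so automatic regularity and transversality make $\mathcal{M}$ a smooth $1$-manifold. I would then study the closure $\overline{\mathcal{M}}$ via Gromov compactness. Sphere bubbles are excluded because $\omega$ is cohomologically trivial on the relevant region so closed holomorphic classes carry zero area. Disk bubbles are excluded by Maslov-index bookkeeping: a bubble would have nonnegative Maslov index and positive symplectic area, forcing the principal component to have Maslov index at most zero, contradicting the transverse elliptic intersection. Escape to the boundary $M$ is prevented by the taming condition and the contact/convexity condition along $M$, which provides a maximum principle forbidding an interior point of a holomorphic disk from touching $M$.

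Once compactness without breaking is established, $\overline{\mathcal{M}}$ is a compact $1$-manifold, and its boundary must consist of an even number of points. One endpoint is the degenerate Bishop disk collapsing to $e$. The only other possible endpoint would come from a disk whose boundary approaches $\partial D_0$; but the boundary $\partial D_0$ is a leaf of the characteristic foliation tangent to $\xi$, while the boundary of any disk in $\mathcal{M}$ is transverse to that foliation away from $e$, so no such limit exists. Thus $\overline{\mathcal{M}}$ has exactly one boundary point, an impossibility. The main obstacle is precisely this Gromov-compactness analysis: in the weak-filling case one must first carry out the cohomological modification of $\omega$ so that the maximum principle and exclusion of sphere bubbles remain available, and the Maslov-index accounting for disk bubbles must be made rigorous using the specific geometry of the Bishop family near the elliptic singularity.
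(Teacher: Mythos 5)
The paper does not prove this statement; it cites the original papers of Gromov and Eliashberg and uses the result as a black box. So there is no in-text argument to compare your sketch against. That said, your outline is essentially the classical Gromov--Eliashberg filling-by-holomorphic-disks proof: normalize the overtwisted disk to have a single positive elliptic singularity, seed a Bishop family there with respect to a tamed $J$ adapted to $\xi$ along the boundary, run Gromov compactness, and extract a parity contradiction from the resulting one-dimensional moduli space. In the strong and exact cases this is correct in spirit.

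Your treatment of the weak case, however, contains a genuine gap. You claim to ``modify $\omega$ in a collar of $M$ so that it becomes exact there, which is possible in dimension four because $H^2(M;\mathbb{R})$ pairs appropriately with $\omega|_\xi>0$.'' This is false: weak fillings with $[\omega|_M]\neq 0$ in $H^2(M;\mathbb{R})$ exist, and indeed the present paper's own Example~\ref{exfilling}~2) is one --- the product filling $(\mathbb{T}^2\times D^2,\,\omega_1\oplus\omega_2)$ of $(\mathbb{T}^3,\xi_n)$ restricts on the boundary to the pullback of the $\mathbb{T}^2$-area form, whose cohomology class is nonzero and cannot be killed by any modification supported in a collar. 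Since your exclusion of sphere bubbles rests on cohomological triviality of $\omega$ on ``the relevant region,'' this step of your argument collapses precisely in the weak case, which is the case the paper actually invokes. A correct treatment of the weak case must avoid the exactness modification: one either localizes the entire construction to a small ball around the overtwisted disk where $\omega$ can be taken standard, or establishes a uniform energy bound on the Bishop family (via the totally real boundary condition, the maximum principle at the $J$-convex boundary, and monotonicity near the elliptic point) that sits strictly below the minimal $J$-sphere energy for a suitably chosen $J$, so that sphere bubbling is excluded without any cohomological hypothesis on $\omega$. As written, your sketch does not establish the theorem in the weak generality stated.
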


\begin{remark}\label{regrem}
We note that for a 2-form $\omega$ to be symplectic, it needs to be at least $C^1$, because of the closedness condition. However, when $\omega$ is exact, i.e. $\omega=d\alpha$ for some 1-form $\alpha$, this condition is automatically satisfied, assuming the required regularity. So for most purposes, we don't need to assume, for an exact 2-form $\omega=d\alpha$, any regularity more than $C^0$, and methods of symplectic geometry and topology, in particular, the use of $J$-holomorphic curves and Theorem~\ref{gromov}, can be applied. We can also approximate such $\omega=d\alpha$, by symplectic forms of arbitrary high regularity, using $C^1$-approximations of $\alpha$.  Therefore, we still call such $\omega$ symplectic, especially in Theorem~\ref{main}.
\end{remark}

It is known that not all tight contact structures are weakly symplectically fillable, the set of strongly symplectically fillable contact manifolds is a proper subset of the set of weakly symplectically fillable contact manifolds and the set of exactly symplectically fillable contact manifolds is a proper subset of the set of strongly symplectically fillable contact manifolds. Moreover, if a disconnected contact manifold is strongly or weakly symplectically fillable, each of its components is strongly or weakly symplectically fillable, respectively \cite{elfilling,etfilling}.

\begin{example}\label{exfilling}
1) The unit ball in $(\mathbb{R}^4,\omega_{std})$ is a strong symplectic filling for $(\mathbb{S}^3,\xi_{std})$, considered as the unit sphere in $\mathbb{R}^4$.

2) We want to show that all tight contact structures on $\mathbb{T}^3$, given in Example~\ref{excontact}~3), are weakly symplectically fillable. We can observe that after an isotopy $\xi_n=\ker{dz+\epsilon\{\cos{2\pi nz}dx-\sin{2\pi nz}dy\}}$ for small $\epsilon>0$, i.e. we can isotope $\xi_n$ to be arbitrary close to the horizontal foliation $\ker{dz}$ on $\mathbb{T}^3$. Now consider the symplectic manifold $(X,\omega)=(\mathbb{T}^2\times D^2,\omega_1\oplus \omega_2)$, where $\omega_1$ and $\omega_2$ are area forms for $T^2$ and $D^2$, respectively. Clearly, $\partial X=\mathbb{T}^3$ and if at the boundary, we consider the coordinates $(x,y)$ for $\mathbb{T}^2$ and $z$ for the angular coordinate of $D^2$, we have $\omega|_{\ker{dz}}>0$. Since, for small $\epsilon>0$, $\xi_n$ is a small perturbation of $\ker{dz}$, we also have $\omega|_{\xi_n}>0$. Therefore, all $\xi_n$s are weakly symplectically fillable. It can be seen \cite{eltorus} that except $\xi_1$, none of these contact structures, are strongly symplectically fillable and the canonical symplectic structure on the cotangent bundle $T^*T^2$, provides an exact symplectic filling for $(T^3,\xi_1)$.
\end{example}

\begin{remark}\label{torsion}
The concept of {\em Giroux torsion} was introduced by Emmanuel Giroux \cite{giroux}. A contact manifold $(M,\xi)$ is said to contain Giroux torsion, if it admits a contact embedding of $$\left( [0,2\pi]\times \mathbb{S}^1\times\mathbb{S}^1 \text{ with coordinates }(t,\phi_1,\phi_2),\ \ker{(\cos{t}\,d\phi_1+sin{t}\,d\phi_2}) \right)\rightarrow(M,\xi).$$

Note that all the tight contact structures on $\mathbb{T}^3$, discussed in Example~\ref{excontact}~3) contain Giroux torsion, except for $n=1$. Later in \cite{gay}, it was proven that contact structures containing Giroux torsion do not admit strong symplectic fillings. This notion can be generalized by considering {\em Giroux $\pi$-torsion}. i.e. when the contact manifold contains half of a Giroux torsion: $$\left( [0,\pi]\times \mathbb{S}^1\times\mathbb{S}^1 \text{ with coordinates }(t,\phi_1,\phi_2),\ker{\cos{t}\ d\phi_1+sin{t}\ d\phi_2} \right)\rightarrow(M,\xi).$$

In Example~\ref{excontact}~3), for all $n>1$, the contact manifold $(\mathbb{T}^3,\xi_n)$ contains Giroux torsion, while $(\mathbb{T}^3,\xi_1)$ is constructed by gluing two Giroux $\pi$-torsions along their boundary.

\end{remark}

A special case of exact symplectic fillings was observed by Mitsumatsu, in the presence of smooth volume preserving Anosov flows \cite{mitsumatsu} (see Section~\ref{4} for more discussion and improvement of Mitsumatsu's theorem). Alongside \cite{stein}, these were the first examples of exact symplectic fillings with disconnected boundaries. Explicit examples of such structures can be found in \cite{mitsumatsu} and they are also discussed in \cite{mnw}.

\begin{definition}\label{lpair}
We call a pair $(\alpha_-,\alpha_+)$ a {\em Liouville pair}, if $\alpha_-$ and $\alpha_+$ are negative and positive $C^1$ contact forms, respectively, whose kernels are transverse and $[-1,1]_t\times M$, equipped with the symplectic structure $d\{ (1-t)\alpha_-+(1+t)\alpha_+ \})$ is an exact symplectic filling for $(M,\ker{\alpha_+})\sqcup(-M,\ker{\alpha_-})$, where $-M$ is $M$ with reversed orientation.
\end{definition}

We note that in the above definition, the corresponding Liouville form is always at least $C^1$ since the underlying contact structures $\alpha_-$ and $\alpha_+$ are (see Remark~\ref{regrem}).

\section{Anosovity And The Geometry Of Expansion}\label{3}

In this section, we review the basic facts about Anosovity, emphasizing on the expansion behavior of the flows in stable and unstable directions, from a geometric point of view.

In the following, we assume $X$ is a non-zero $C^1$-vector field on a closed, oriented 3-manifold $M$.

\begin{definition}\label{anosov}
We call the $C^1$ flow $\phi^t$ {\em Anosov}, if there exists a splitting $TM=E^{ss} \oplus E^{uu} \oplus \langle X \rangle$, such that the splitting is continuous and invariant under $\phi_*^t$ and 
$$ ||\phi_*^t (v)  || \geq Ae^{Ct}||v || \text{ for any }v \in E^{uu},$$
$$||\phi_*^t (u) || \leq Ae^{-Ct}||u ||\text{ for any }u \in E^{ss},$$
where $C$ and $A$ are positive constants, and $||.||$ is induced from some Riemannian metric on $TM$. We call $E^{uu}$ ($E^{uu}\oplus \langle X\rangle$) and $E^{ss}$ ($E^{ss}\oplus \langle X\rangle$), the strong (weak) unstable and stable directions (bundles), respectively. Moreover, we call the vector field $X$, the generator of such flow, an {\em Anosov vector field}.
\end{definition}

In this paper, we assume $E^{ss}$ and $E^{uu}$ to be orientable. This can be arranged, possibly after going to a double cover of $M$.

\begin{example}
Classic examples of Anosov flows in dimension 3 include the geodesic flows on the unit tangent space of hyperbolic surfaces and suspension of Anosov diffeomorphisms of torus. By now, we know that there are Anosov flows on hyperbolic manifolds as well \cite{hypanosov}.
\end{example}

Here, we note that by \cite{anosov},  a small perturbation of any Anosov flow is Anosov and moreover, is {\em orbit equivalent} to the original flow, i.e. there exists a homeomorphism mapping the orbits of the perturbed flow to the orbits of the original flow. Therefore, for many practical purposes one can assume higher regularity for the flow. However, for our purposes, it suffices for the generating vector field to be $C^1$.

In \cite{mitsumatsu} and \cite{confoliations}, it is shown that $C^1$ Anosov vector fields span the intersection of a pair of transverse positive and negative contact structures, i.e. a {\em bi-contact structure}. However, it is known that the converse is not true. As a matter of fact, non-zero vector fields in the intersection of a bi-contact structure define a considerably larger class of vector fields, namely {\em projectively Anosov vector fields}. By \cite{mitsumatsu,confoliations}, this is equivalent to the following definition:

\begin{definition}\label{ca}
We call a flow $\phi^t$, generated by the $C^1$ vector field $X$, {\em projectively Anosov}, if its induced flow on $TM/\langle X\rangle$ admits a {\em dominated splitting}. That is, there exists a splitting $TM/\langle X \rangle=E^s \oplus E^u$, such that the splitting is continuous and invariant under $\tilde{\phi}_*^t$ and 
$$ ||\tilde{\phi}_*^t (v)  || / ||\tilde{\phi}_*^t (u) || \geq Ae^{Ct}||v || /||u ||$$
for any $v \in E^u$ ({\em unstable direction}) and $u \in E^s$ ({\em stable direction}), where  $C$ and $A$ are positive constants, $||.||$ is induced from some Riemannian metric on $TM / \langle X\rangle$ and $\tilde{\phi}_*^t$ is the flow induced on $TM/\langle X \rangle$.

Moreover, we call the vector field $X$, a {\em projectively Anosov vector field}.
\end{definition}

Similar to Anosov flows, we assume the orientability of the stable and unstable directions of projectively Anosov flows in this paper.

In \cite{mitsumatsu,confoliations}, it is shown:

\begin{proposition}\label{cabi}
Let $X$ be a $C^1$ vector field on $M$. Then, $X$ is projectively Anosov, if and only if, there exist positive and negative contact structures, $\xi_+$ and $\xi_-$ respectively, which are transverse and $X\subset \xi_+ \cap \xi_-$.
\end{proposition}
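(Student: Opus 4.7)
The plan is to build a dictionary between the contact condition on plane fields containing $X$ and the infinitesimal rotation of the induced line fields in $TM/\langle X\rangle$ under $\tilde{\phi}^t_*$, and then apply this dictionary in both directions. The key observation is the Cartan formula $\mathcal{L}_X\alpha=\iota_X d\alpha$ for any 1-form $\alpha$ with $\alpha(X)=0$, which gives $d\alpha(X,Y)=(\mathcal{L}_X\alpha)(Y)$. Evaluating the 3-form $\alpha\wedge d\alpha$ on a frame adapted to $\xi=\ker\alpha$ (which contains $X$) shows that $\alpha\wedge d\alpha\ne 0$ at a point is equivalent to the line field $L:=\pi(\xi)\subset TM/\langle X\rangle$ having nonzero infinitesimal rotation under $\tilde{\phi}^t_*$ at that point, with the sign of $\alpha\wedge d\alpha$ recording the direction of the rotation relative to the ambient orientation.

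For the reverse direction, given a bi-contact pair $(\xi_-,\xi_+)$ with $X\subset\xi_+\cap\xi_-$, set $L_\pm:=\pi(\xi_\pm)\subset TM/\langle X\rangle$; transversality of $\xi_\pm$ ensures $L_+\ne L_-$ pointwise, giving a continuous splitting. By the dictionary above, the positive contact condition on $\xi_+$ and the negative contact condition on $\xi_-$ force $L_+$ and $L_-$ to rotate in opposite directions under $\tilde{\phi}^t_*$ at every point, and compactness of $M$ yields a uniform positive lower bound on these rotation rates. The closed projective sector swept from $L_-$ to $L_+$ is thereby strictly mapped into itself by $\tilde{\phi}^t_*$ for $t>0$; its nested intersection produces a continuous $\tilde{\phi}^t$-invariant line field $E^u$, and backward iteration on the complementary sector gives $E^s$. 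Standard cone-field arguments convert the uniform rotation rate into the exponential domination inequality in Definition~\ref{ca}.

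For the forward direction, begin with the dominated splitting $TM/\langle X\rangle = E^s\oplus E^u$, and let $F^s:=E^s\oplus\langle X\rangle$ and $F^u:=E^u\oplus\langle X\rangle$ be their continuous lifts to plane fields in $TM$ (both $\phi^t$-invariant). Choose 1-forms $\alpha_s$ with $\ker\alpha_s=F^s$ and $\alpha_u$ with $\ker\alpha_u=F^u$, both annihilating $X$, and set $\alpha_\pm:=\alpha_s\pm\alpha_u$, in the spirit of Mitsumatsu. The associated line fields $\pi(\ker\alpha_\pm)$ lie on opposite sides of $E^s$ (equivalently, opposite sides of $E^u$) in $\mathbb{P}(TM/\langle X\rangle)$, and the pointwise domination of the infinitesimal expansion on $E^u$ over that on $E^s$, obtained after passing to an adapted Lyapunov-type metric, forces these two line fields to have infinitesimal rotations of opposite signs at every point. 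By the dictionary, $\alpha_\pm\wedge d\alpha_\pm$ then have opposite signs, giving transverse contact structures of opposite sign both containing $X$.

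The main obstacle I anticipate is regularity: the dominated splitting and hence $F^s,F^u$ are only continuous in general, so the 1-forms $\alpha_s,\alpha_u$ constructed above need not be $C^1$, while the contact condition involves $d\alpha_\pm$ and therefore requires one derivative of the 1-forms. The remedy is to approximate $\alpha_s,\alpha_u$ by smooth 1-forms that are $C^0$-close to the originals; the uniform rotation estimate derived from the dominated splitting survives sufficiently small $C^0$ perturbations of the underlying plane fields, so the opposite-sign contact conditions persist under the approximation. This is exactly the regularity tension that, in the sharper setting of Theorem~\ref{main1}, forces the development of the dedicated approximation techniques of Section~\ref{4}.
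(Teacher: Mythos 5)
The paper states this proposition as a result of Mitsumatsu and Eliashberg--Thurston and does not reprove it, so there is no in-paper argument to compare against line by line. Your dictionary --- for $X\subset\ker\alpha$ with $\iota_X\alpha=0$, the sign of $\alpha\wedge d\alpha$ records the direction of infinitesimal rotation of $\pi(\ker\alpha)$ under the linear Poincar\'e flow, via $d\alpha(X,\cdot)=(\mathcal{L}_X\alpha)(\cdot)$ --- is exactly the content of Remark~\ref{geomca}, and your cone-field route from bi-contact to projective Anosovity is the standard one; that direction is sound.

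The gap is in the regularity remedy for the forward direction. You claim that replacing $\alpha_s,\alpha_u$ (with kernels $E^s\oplus\langle X\rangle$, $E^u\oplus\langle X\rangle$) by smooth 1-forms $C^0$-close to them preserves the contact condition on $\alpha_\pm=\alpha_s\pm\alpha_u$, because ``the uniform rotation estimate survives sufficiently small $C^0$ perturbations.'' This is false as stated: the contact condition is not a $C^0$-open condition on the 1-form, since $\alpha\wedge d\alpha$ involves a derivative. By your own dictionary, the contact condition for $\alpha_\pm$ compares the dynamical rotation rate (bounded below in terms of $r_u-r_s>0$ coming from the domination) against the $X$-derivative of the perturbed plane field along the flow; the latter is exactly what a generic $C^0$-small smooth approximation fails to control and can be made arbitrarily large, swamping the dynamical rotation and destroying contactness. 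What is needed is a $C^1$ approximation that is $C^0$-close to the original \emph{and} whose $X$-derivative is close to the original's --- which exists precisely because $E^s,E^u$ are $X$-differentiable by flow-invariance after time-averaging the metric, and producing such an approximation is exactly the content of Lemma~\ref{app}. You correctly identify the regularity tension and gesture at Section~\ref{4}, but the assertion that plain $C^0$-smoothing suffices is the step that breaks; the fix is the $X$-derivative-respecting approximation, not generic smoothing.
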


 \begin{figure}

 \begin{subfigure}[b]{0.4\textwidth}

  \center \begin{overpic}[width=4cm]{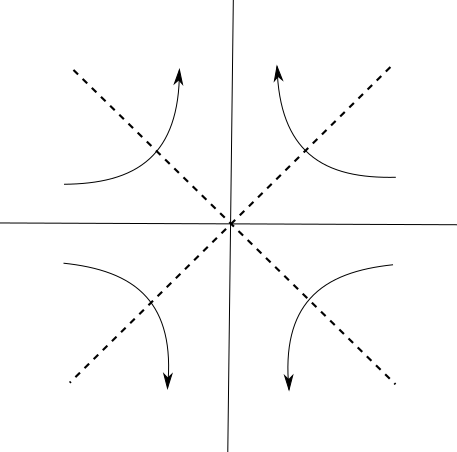}

         \put(10,105){$\xi_-$}
         \put(100,105){$\xi_+$}
         \put(57,120){$E^u$}
         \put(120,55){$E^s$}
       \end{overpic}
    \caption{Anosov flows}
    \label{fig:1}
  \end{subfigure}
  \hspace{2cm}
  \begin{subfigure}[b]{0.4\textwidth}
  \center \begin{overpic}[width=4cm]{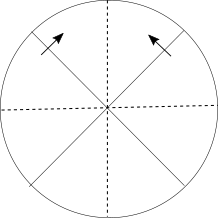}
  \put(57,120){$E^u$}
         \put(120,55){$E^s$}
                  \put(10,105){$\xi_-$}
         \put(100,105){$\xi_+$}

  \end{overpic}
    \vspace{0.3cm}
    \caption{Projectively Anosov flows}
    \label{fig:2}
  \end{subfigure}
  
 \caption{The local behavior of (projectively) Anosov flows}
\end{figure}

This motivates the following definitions:

\begin{definition}
We call the pair $(\xi_-,\xi_+)$ a {\em bi-contact structure} on $M$, if $\xi_+$ and $\xi_-$ are positive and negative contact structures on $M$, respectively, and $\xi_-\pitchfork\xi_+$.
\end{definition}

\begin{definition}
Let $X$ be a projectively Anosov vector field on $M$. We call a bi-contact structure $(\xi_-,\xi_+)$ a {\em supporting} bi-contact structure for $X$, or the generated projectively Anosov flow, if $X\subset \xi_- \cap \xi_+$. We call a positive (negative) contact structure or more generally, any plane field $\xi$, a supporting positive (negative) contact structure or plane field, respectively, for $X$ or the generated flow, if $X \subset \xi$.
\end{definition}

See Example~\ref{exbicontact} for explicit examples of projectively Anosov flows (which are not Anosov). Similar examples can be constructed on Nil manifolds as well \cite{mitsumatsu}. Also, see \cite{bbp} for using the idea of {\em hyperbolic plugs} \cite{building} to construct of projectively Anosov flows (which cannot be deformed, through projectively Anosov flows, into Anosov flows) on atoridal manifolds.

Consider the vector bundle $\pi:TM \rightarrow TM / \langle X \rangle$ and notice that for any plane field $\eta$ which is transverse to the flow, there exists a natural vector bundle isomorphism $TM / \langle X \rangle \simeq \eta$, induced by projection onto $\eta$ and along $X$. Therefore, $\pi$ can be interpreted as such projection as well.

We also notice that in Definition~\ref{ca}, the line fields $E^u,E^s \subset TM/\langle X \rangle$ do not necessarily lift to invariant line fields $E^{uu},E^{ss}\subset TM$, respectively (see \cite{noda} for the examples of when they do not). However, it is a classical fact from dynamical systems that when the induced flow on $TM/\langle X \rangle$ (usually called {\em Linear Poincaré Flow}) admits an invariant continuous {\em hyperbolic splitting} $E^s\oplus E^u$  (uniformly contracting along $E^s$ and expanding along $E^u$), such lift does exist and we we will have an invariant splitting as in Definition~\ref{anosov} (see \cite{hyplift}, Proposition~1.1).

\begin{definition}\label{balanced}
We call a projectively Anosov flow (vector field) {\em balanced}, if it preserves a transverse plane field $\eta$.
\end{definition}

\begin{proposition}\label{balanced2}
The flow $\phi^t$ is a balanced projectively Anosov, if and only if, there exists a splitting $TM=E^{ss} \oplus E^{uu} \oplus \langle X \rangle$, such that the splitting is continuous and invariant under $\phi_*^t$ and 
$$ ||\phi_*^t (v)  || / ||\phi_*^t (u) || \geq Ae^{Ct}||v || /||u ||$$
for any $v \in E^{uu}$ ({\em strong unstable direction}) and $u \in E^{ss}$ ({\em strong stable direction}), where  $C$ and $A$ are positive constants, and $X$ is the $C^1$ generator of the flow.
\end{proposition}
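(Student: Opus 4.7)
The key observation underlying both directions is that for any plane field $\eta$ transverse to $X$, the projection $\pi|_\eta \colon \eta \to TM/\langle X \rangle$ is a pointwise bundle isomorphism, and when $\eta$ is $\phi_*^t$-invariant this isomorphism conjugates $\phi_*^t|_\eta$ with the linear Poincar\'e flow $\tilde{\phi}_*^t$, i.e.\ $\pi|_\eta \circ \phi_*^t|_\eta = \tilde{\phi}_*^t \circ \pi|_\eta$. The proposition then amounts to transporting the dominated splitting back and forth between $TM/\langle X\rangle$ and such an invariant $\eta$.

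For the forward direction, assume $X$ is balanced projectively Anosov with invariant transverse plane field $\eta$. Definition~\ref{ca} supplies a continuous, $\tilde{\phi}_*^t$-invariant dominated splitting $TM/\langle X\rangle = E^s \oplus E^u$. I would pull this splitting back under $\pi|_\eta$ to obtain continuous line fields $E^{ss}, E^{uu} \subset \eta \subset TM$. Invariance of $\eta$ ensures that $E^{ss}$ and $E^{uu}$ are $\phi_*^t$-invariant, and the conjugacy above lets the domination inequality transport verbatim once the norms on $\eta$ and $TM/\langle X\rangle$ are chosen compatibly (for instance by taking the metric on $TM/\langle X\rangle$ to be the pushforward under $\pi|_\eta$ of a chosen metric on $\eta$). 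Adjoining $\langle X \rangle$ then yields the required splitting of $TM$.

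For the reverse direction, given a continuous, invariant splitting $TM = E^{ss} \oplus E^{uu} \oplus \langle X \rangle$ satisfying the domination estimate, I set $\eta := E^{ss} \oplus E^{uu}$. Then $\eta$ is a continuous plane field transverse to $X$ and $\phi_*^t$-invariant since each summand is, witnessing the balanced condition. Applying $\pi|_\eta$ pushes the splitting and the domination inequality forward to a dominated splitting of the linear Poincar\'e flow on $TM/\langle X \rangle$, establishing projective Anosovity via Definition~\ref{ca}. The argument is essentially formal once the above conjugation is observed; the only mild nuisance is keeping track of the two Riemannian norms, but any discrepancy between two continuous metrics on the compact manifold $M$ differs by bounded multiplicative factors that can be absorbed into the constant $A$, so I do not anticipate any deeper obstacle.
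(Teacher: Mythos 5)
Your proposal is correct and follows essentially the same route as the paper: the paper's one-line proof simply observes $E^{ss}=\eta\cap\pi^{-1}(E^s)$ and $E^{uu}=\eta\cap\pi^{-1}(E^u)$, which is precisely your pullback of the dominated splitting under $\pi|_\eta$, together with the (implicit) converse $\eta=E^{ss}\oplus E^{uu}$. Your write-up merely spells out the conjugacy between $\phi^t_*|_\eta$ and the linear Poincar\'e flow and the bookkeeping on metrics that the paper leaves to the reader.
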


\begin{proof}
We can easily observe $E^{ss}=\eta\cap \pi^{-1}(E^s)$ and $E^{uu}=\eta\cap \pi^{-1}(E^u)$.
\end{proof}

\begin{remark}
 It can be seen that given a projectively Anosov flows, the plane fields $\pi^{-1}(E^s)$ and $\pi^{-1}(E^u)$ are $C^0$ (possibly non-uniquely) integrable plane fields, tangent to (branching) foliations we call {\em stable and unstable foliations}, respectively. In the Anosov case, thanks to the classical regularity theory of Anosov flows, higher regularity of these foliations can be assumed and that is the basis for the use of foliation theory to study Anosov dynamics. Therefore, such tools are not all well transferred to projectively Anosov dynamics in general. However, assuming more regularity for the associated foliations of a projectively Anosov flow, some rigidity results are known \cite{noda,asaoka}.
\end{remark}

It is worth to pause and make few observations about the geometry of projectively Anosov flows (the remark is discussed more in depth in \cite{confoliations}).

\begin{remark}\label{geomca}
If $X$ is some vector field on $M$, which is tangent to some plane field $\xi$, we can measure the contactness of $\xi$ from the rotation of the flow, with respect to $\xi$, in the following way. Choose some transverse plane field $\eta$, which is differentiable in the direction of $X$ (for instance, if $X$ is a balanced projectively Anosov vector field, $E^{ss}\oplus E^{uu}$ can be chosen) and orient it such that $X$ and $\eta$ induce the chosen orientation of $M$. Let $\lambda=\xi \cap \eta$ and $\lambda_p^t=\phi_*^{-t}(\xi_{\phi^t(p)})\cap \eta$ for $x\in M$ and $t\in \mathbb{R}$. Finally, let $\theta_p^t$ be the angle between $\lambda^0_p$ and $\lambda_p(t)$, for some Riemannian metric, which is differentiable in the direction of $X$. Then, $\xi$ is a positive or negative contact structure, if and only if,

$$X\cdot \theta_p(t)<0 \hspace{0.2cm} \text{ or }\hspace{0.2cm}  X\cdot \theta_p(t)>0,$$
respectively, for all $p$ and $t$.

Now, if $X$ is a projectively Anosov vector field, and $(\xi_-,\xi_+)$ a bi-contact structure such that $X\subset \xi_-\cap \xi_+$, let $\eta$ be any transverse plane field, and  $\lambda_+=\xi_+\cap \eta$ and $\lambda_-=\xi_- \cap \eta$. Similar to above, we can define $\lambda_{+,p}^t$ and $\lambda_{-,p}^t$ and observe
$$\lim_{t \rightarrow +\infty}{\lambda_{+,p}^t}=\lim_{t \rightarrow +\infty}{\lambda_{-,p}^t}=\pi^{-1}(E^s)\cap \eta$$
and
$$\lim_{t \rightarrow -\infty}{\lambda_{-,p}^t}=\lim_{t \rightarrow -\infty}{\lambda_{+,p}^t}=\pi^{-1}(E^u)\cap \eta,$$

Equivalently,

$$\lim_{t\rightarrow +\infty} \phi^t_*(\xi_+)=\lim_{t\rightarrow +\infty} \phi^t_*(\xi_-)=\pi^{-1}(E^u)$$
and
$$\lim_{t\rightarrow -\infty} \phi^t_*(\xi_+)=\lim_{t\rightarrow -\infty} \phi^t_*(\xi_-)=\pi^{-1}(E^s).$$
\end{remark}

It turns out that we can characterize Anosovity of a projectively Anosov vector field by the {\em expansion of its stable and unstable directions}.

We first note that the norm used in the definition of a (projectively) Anosov flow $X$ is in general induced from some $C^0$ Riemannian structure $g$. However, if we replace $g$ with $\frac{1}{T}\int_0^T \phi^{t*}gdt$, where $\phi^t$ is the flow of $X$, the resulting Riemannian metric will be differentiable in $X$-direction, i.e. $\mathcal{L}_Xg^T$ would exist. Moreover, by considering large enough $T$, with respect to such metric, we can assume $A=1$ in the above definitions, meaning that the expansion or contraction in unstable and stable directions, respectively, for an Anosov flow, or the relative expansion for a projectively Anosov flow, start immediately. Assuming such conditions, we can compute the infinitesimal rate of expansion for vectors in the stable and unstable directions. Remember that any transverse plane field $\eta$ induces a vector bundle isomorphism $TM/\langle X \rangle$. Using such isomorphism, the restriction $g |_\eta$ of any Riemannian metric $g$ on $TM$, defines a metric on $TM/\langle X \rangle$, and conversely, given any Riemannian metric on $TM/\langle X \rangle$, we can define a metric on $TM$, whose restriction on $\eta$ is induced from such metric.

Let $\tilde{e}_u\in E^u \subset TM/\langle X \rangle$ be the unit vector field (with respect to some Riemannian metric) defined in the neighborhood of a point. Noticing that the linear flow on $TM/\langle X \rangle$ preserves the direction of $\tilde{e}_u$, we compute:
$$\mathcal{L}_X \tilde{e}_u=\frac{\partial}{\partial t} \tilde{\phi}_*^{-t} (\tilde{e}_u)\bigg|_{t=0}=\frac{\partial}{\partial t} \frac{\tilde{\phi}_*^{-t} \left(\tilde{\phi}_*^t (\tilde{e}_u) \right)}{||\tilde{\phi}_*^t (\tilde{e}_u)||}\bigg|_{t=0}=\left(\frac{\partial}{\partial t} \frac{1}{||\tilde{\phi}_*^t (\tilde{e}_u)||} \right)\bigg|_{t=0}\tilde{e}_u$$
$$=-\left(\frac{\partial}{\partial t} ||\tilde{\phi}_*^t (\tilde{e}_u)|| \right)\bigg|_{t=0}\tilde{e}_u=-\left(\frac{\partial}{\partial t} \ln{||\tilde{\phi}_*^t (\tilde{e}_u)||} \right)\bigg|_{t=0}\tilde{e}_u.$$

We can do similar computation for the (locally defined) unit vector field $\tilde{e}_s\in E^s$.

\begin{definition}
Using the above notation, we define {\em the expansion rate of the (un)stable direction} as
$$r_s:=\frac{\partial}{\partial t} \ln{||\tilde{\phi}_*^t (\tilde{e}_s)||}\bigg|_{t=0} \text{   } \left ( r_u:=\frac{\partial}{\partial t} \ln{||\tilde{\phi}_*^t (\tilde{e}_u)||}\bigg|_{t=0} \right).$$
\end{definition}

We note that similar notions have been previously used and studied in the literature. For instance, see \cite{regular,simic}. Naturally, the positive and negative expansion rate correspond to the expanding and contracting behaviors of the flow in a certain direction, respectively.

\begin{proposition}\label{cageoformula}
The above computation shows:
$$\mathcal{L}_X \tilde{e}_s=-r_s \tilde{e}_s \text{   }\left( \mathcal{L}_X \tilde{e}_u=-r_u \tilde{e}_u  \right),$$
and
$$\tilde{\phi}^T_*(\tilde{e}_s)=e^{\int_0^T r_s(t)dt}\tilde{e}_s\text{   }\left( \tilde{\phi}^T_*(\tilde{e}_u)=e^{\int_0^T r_u(t)dt}\tilde{e}_u  \right).$$
\end{proposition}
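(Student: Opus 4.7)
The plan is to read off the first identity directly from the displayed calculation that immediately precedes the definition of $r_s$ and $r_u$. That computation, carried out for $\tilde{e}_u$, shows $\mathcal{L}_X \tilde{e}_u = -\bigl(\partial_t \ln\|\tilde{\phi}^t_*(\tilde{e}_u)\|\bigr)\big|_{t=0}\,\tilde{e}_u$, and the crucial inputs are that $E^u$ is a one-dimensional subbundle of $TM/\langle X\rangle$ invariant under the linear Poincaré flow and that $\tilde{e}_u$ is the positive unit vector therein. Repeating the same chain of equalities verbatim with $\tilde{e}_s$ in place of $\tilde{e}_u$ yields $\mathcal{L}_X \tilde{e}_s = -r_s \tilde{e}_s$ by definition of $r_s$, so the first identity requires no new work.

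For the integrated formula, I would set $N(T,p) := \ln\|\tilde{\phi}^T_*(\tilde{e}_s(p))\|$, noting that invariance of $E^s$ together with positivity of $\tilde{e}_s$ gives $\tilde{\phi}^T_*(\tilde{e}_s(p)) = e^{N(T,p)}\,\tilde{e}_s(\phi^T p)$. The cocycle relation $\tilde{\phi}^{T+s}_* = \tilde{\phi}^s_* \circ \tilde{\phi}^T_*$ together with this scaling produces the additive identity $N(T+s,p) = N(T,p) + N(s,\phi^T p)$. Differentiating in $s$ at $s=0$ and comparing with the definition of $r_s$ applied at the point $\phi^T p$ yields the ODE $\partial_T N(T,p) = r_s(\phi^T p)$ with initial condition $N(0,p) = 0$. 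Integrating from $0$ to $T$ gives $N(T,p) = \int_0^T r_s(\phi^t p)\,dt$, which, on exponentiating and suppressing basepoints as in the statement, is exactly the claim. The unstable case is identical.

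The main point to watch—rather than a genuine obstacle—is regularity: for a generic $C^1$ projectively Anosov flow the line fields $E^s$ and $E^u$ are only continuous, so a priori $\tilde{e}_s$ and $\tilde{e}_u$ need not be $C^1$ on $M$. This is precisely what the paragraph preceding the definition of $r_s$ addresses by replacing the ambient metric with its $T$-time average along $\phi^t$, making it differentiable in the $X$-direction; the Lie derivatives above are then interpreted along orbits, and the integrand $r_s\circ\phi^t$ is continuous in $t$, so the integral is well-defined pointwise. With this setup in place, the proposition is really a packaging of the cocycle identity of the linear Poincaré flow in the language of Lie derivatives.
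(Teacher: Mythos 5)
Your proposal is correct and follows the paper's own reasoning: the paper simply declares that the displayed chain of equalities preceding the definition of $r_s, r_u$ yields both formulas, and you have faithfully read off the Lie-derivative identity from that computation and then unpacked the integrated version via the standard cocycle relation $\ln\|\tilde\phi^{T+s}_*\tilde e_s\| = \ln\|\tilde\phi^{T}_*\tilde e_s\| + \ln\|\tilde\phi^{s}_*\tilde e_s(\phi^T p)\|$. Your closing regularity remark about the $T$-time-averaged metric and $X$-directional differentiability is exactly the point the paper addresses just before introducing these quantities, so nothing is missing.
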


Now consider a transverse plane field $\eta\simeq TM/\langle X \rangle$, equipped with a Riemannian metric $\tilde{g}$ on $TM/\langle X \rangle$, defining stable and unstable expansion rate of $r_s,r_u$. From $\tilde{g}$, a Riemannian metric is induced on $\eta$, which can be extended to a Riemannian metric $g$ on $TM$, such that $\mathcal{L}_X g$ exists, assuming that $\mathcal{L}_X \hat{g}$ and $\mathcal{L}_X \eta$ exist. Let $e_s,e_u\in\eta$ be chosen such that $\pi(e_s)=\tilde{e}_s$ and $\pi(e_u)=\tilde{e}_u$, and notice that $||e_s||=||e_u||=1$. Let $\pi_\eta$ be the projection onto $\eta$ and along $X$ and compute
$$\mathcal{L}_Xe_u=\frac{\partial}{\partial t} \phi_*^{-t} (e_u)\bigg|_{t=0}=\frac{\partial}{\partial t} \pi_\eta\big( \phi_*^{-t} (e_u)\big) \bigg|_{t=0}+q^\eta_u X,$$
for some function $q^\eta_u:M\rightarrow \mathbb{R}$. Since the metric on $\eta$ is induced from $\hat{g}$, this implies
$$\mathcal{L}_Xe_u=-r_ue_u+q^\eta_uX.$$

Similarly, $$\mathcal{L}_Xe_s=-r_se_s+q^\eta_sX,$$
for some function $q^\eta_s:M\rightarrow \mathbb{R}$. We have proved:

\begin{proposition}\label{cageoformula2}
Let $X$ be a projectively Anosov vector field with $r_s,r_u$ being its expansion rate of stable and unstable directions (with respect to some metric on $TM/\langle X \rangle$), respectively. Then, for any transverse plane field $\eta$, there exists a metric on $TM$ such that for unit vector fields $e_u\in\eta \cap \pi^{-1}(E^u)$ and $e_s\in\eta \cap \pi^{-1}(E^s)$, we have
$$\mathcal{L}_Xe_u=-r_ue_u+q^\eta_uX,$$
and $$\mathcal{L}_Xe_s=-r_ue_s+q^\eta_sX,$$
for appropriate real functions $q^\eta_u,q^\eta_s:M\rightarrow \mathbb{R}$. 
\end{proposition}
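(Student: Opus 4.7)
The plan is essentially to follow the computation already sketched in the excerpt, but organize it carefully as a proof of the proposition. The main idea is to choose a metric on $TM$ compatible with the splitting $TM=\eta\oplus\langle X\rangle$ so that the decomposition of $\mathcal{L}_X e_u$ and $\mathcal{L}_X e_s$ along this splitting is forced to match, in the $\eta$-component, the corresponding Lie derivatives already computed on $TM/\langle X\rangle$ in Proposition~\ref{cageoformula}.

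First I would construct the metric $g$ on $TM$ from the data. Using the vector bundle isomorphism $\pi|_\eta\colon \eta\to TM/\langle X\rangle$, pull back the chosen metric $\tilde g$ on $TM/\langle X\rangle$ to obtain a metric on $\eta$. Then extend to all of $TM$ by declaring $X\perp\eta$ and fixing $\|X\|$ to be some positive function (for definiteness, say $\|X\|=1$), so that the restriction of $g$ to $\eta$ agrees with the pullback of $\tilde g$ and so that $\mathcal{L}_Xg$ exists given that $\mathcal{L}_X\eta$ and $\mathcal{L}_X\tilde g$ do. By construction, the unit lifts $e_u,e_s\in\eta$ of the unit vectors $\tilde e_u,\tilde e_s$ on $TM/\langle X\rangle$ are again unit vectors in $(TM,g)$.

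Next I would decompose the Lie derivative. Since $TM=\eta\oplus\langle X\rangle$, with $\pi_\eta$ denoting projection onto $\eta$ along $X$, I can write
$$\mathcal{L}_Xe_u=\pi_\eta(\mathcal{L}_Xe_u)+q^\eta_u X,\qquad \mathcal{L}_Xe_s=\pi_\eta(\mathcal{L}_Xe_s)+q^\eta_s X,$$
uniquely defining the functions $q^\eta_u,q^\eta_s\colon M\to\mathbb{R}$. The key observation is that the quotient map $\pi\colon TM\to TM/\langle X\rangle$ intertwines $\mathcal{L}_X$ with the induced Lie derivative on $TM/\langle X\rangle$, because $\mathcal{L}_X X=0$ implies $\pi$ descends the $X$-flow. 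Hence
$$\pi(\mathcal{L}_Xe_u)=\mathcal{L}_X\tilde e_u=-r_u\tilde e_u,\qquad \pi(\mathcal{L}_Xe_s)=\mathcal{L}_X\tilde e_s=-r_s\tilde e_s,$$
using Proposition~\ref{cageoformula}. Since $\pi|_\eta$ is the bundle isomorphism used to define $g|_\eta$ from $\tilde g$, and since $\pi\circ\pi_\eta=\pi$, the identities above lift to $\pi_\eta(\mathcal{L}_Xe_u)=-r_ue_u$ and $\pi_\eta(\mathcal{L}_Xe_s)=-r_se_s$, which substituted back yields the two claimed formulas.

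The computation itself is routine once the splitting and the metric are fixed, so the only real obstacle is the regularity setup: I must ensure that $\mathcal{L}_X e_u$ and $\mathcal{L}_X e_s$ make pointwise sense, even though in the non-Anosov projectively Anosov setting the splitting $E^s\oplus E^u$ of $TM/\langle X\rangle$ is only $C^0$ a priori. This is handled exactly as in the discussion preceding Proposition~\ref{cageoformula}: one first time-averages the ambient Riemannian metric to arrange differentiability along $X$, and relies on the classical fact that the invariant splitting of the linear Poincaré flow is $C^1$ along orbits, so that $\tilde e_u,\tilde e_s$, and thus their lifts $e_u,e_s$ to the chosen $\mathcal{L}_X$-differentiable transverse plane field $\eta$, admit Lie derivatives in the $X$-direction. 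Once this regularity is granted, the algebraic decomposition above completes the proof.
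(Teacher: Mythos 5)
Your proof is correct and takes essentially the same route as the paper: identify $\eta$ with $TM/\langle X\rangle$ via $\pi$, pull back the metric, split $\mathcal{L}_X e_\bullet$ into its $\eta$- and $X$-components via $\pi_\eta$, and evaluate the $\eta$-component by invoking Proposition~\ref{cageoformula} through the intertwining of the two Lie derivatives under $\pi$. You have also silently corrected a typo in the statement as printed: the second formula should read $\mathcal{L}_X e_s=-r_s e_s+q^\eta_s X$, which is what both your argument and the paper's own computation produce.
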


We also observe the following fact, which we will use in the proof of Theorem~\ref{main}:

\begin{proposition}\label{balancedform}
Let $X$ be a projectively Anosov vector field. When $X$ is balanced (in particular, when $X$ is Anosov), there exists a transverse plane field $\eta$ as in Proposition~\ref{cageoformula2}, for which $q_u^\eta=q_s^\eta=0$ everywhere. In this case,
$$\mathcal{L}_X e_s=-r_s e_s \text{   }\left( \mathcal{L}_X e_u=-r_u e_u  \right),$$
and
$$\phi^T_*(e_s)=e^{\int_0^T r_s(t)dt}e_s\text{   }\left( \phi^T_*(e_u)=e^{\int_0^T r_u(t)dt}e_u  \right).$$
\end{proposition}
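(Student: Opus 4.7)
The key observation is that the $X$-components $q_u^\eta$ and $q_s^\eta$ appearing in Proposition~\ref{cageoformula2} vanish precisely when $\eta$ itself is $\phi^t$-invariant. The plan is therefore to take $\eta$ to be a preserved transverse plane field, which exists by the definition of balanced (and may be taken equal to $E^{ss}\oplus E^{uu}$ in the Anosov case), and then to trace through the computation of Proposition~\ref{cageoformula2} in this setting.

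More concretely, once $\eta$ is preserved by $\phi^t$, the curve $t\mapsto \phi_*^{-t}(e_u)$ stays inside $\eta$, so its derivative at $t=0$, namely $\mathcal{L}_X e_u$, also lies in $\eta$. Comparing with the decomposition $\mathcal{L}_X e_u = -r_u e_u + q_u^\eta X$ from Proposition~\ref{cageoformula2} forces $q_u^\eta \equiv 0$, and the symmetric argument gives $q_s^\eta \equiv 0$. This proves the first pair of displayed formulas.

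For the finite-time formulas I would run a standard cocycle argument along orbits. Using that $E^{uu}$ is $\phi^t$-invariant by Proposition~\ref{balanced2}, I can write $\phi_*^T(e_u(p)) = \mu_u(T,p)\, e_u(\phi^T(p))$ for some positive scalar satisfying the cocycle identity $\mu_u(T+s,p) = \mu_u(s,\phi^T(p))\,\mu_u(T,p)$. The infinitesimal formula $\mathcal{L}_X e_u = -r_u e_u$, together with the convention $\mathcal{L}_X V = \frac{d}{dt}\big|_{t=0}\phi_*^{-t} V$, gives $\partial_T \mu_u(T,p)\big|_{T=0} = r_u(p)$; differentiating the cocycle identity then promotes this to the linear ODE $\partial_T \mu_u(T,p) = r_u(\phi^T(p))\,\mu_u(T,p)$, whose unique solution with $\mu_u(0,p)=1$ is the claimed exponential. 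The argument for $e_s$ is identical.

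The only mildly technical point is regularity, since the Lie-derivative manipulations presuppose that $\eta$ and the vector fields $e_u, e_s$ are differentiable along $X$; however the $\phi^t$-invariance of $\eta$ provides exactly this weak regularity, and the metric was already arranged so that $\mathcal{L}_X g$ exists by the averaging discussion preceding Proposition~\ref{cageoformula2}. Thus no genuine obstacle arises, and the argument goes through in the stated $C^1$ setting.
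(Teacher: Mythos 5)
Your proof is correct: the paper states Proposition~\ref{balancedform} without a written proof, and your argument --- take $\eta$ to be the $\phi^t$-invariant transverse plane field supplied by the definition of balanced (so that $E^{uu}=\eta\cap\pi^{-1}(E^u)$ and $E^{ss}=\eta\cap\pi^{-1}(E^s)$ are invariant line bundles), observe that this keeps the curves $t\mapsto\phi^{-t}_*(e_u),\ \phi^{-t}_*(e_s)$ inside $\eta$ and hence forces $\mathcal{L}_Xe_u,\mathcal{L}_Xe_s\in\eta$, killing $q_u^\eta$ and $q_s^\eta$ --- is exactly what is intended. The cocycle argument for the finite-time exponential is also sound; equivalently, one could note that for invariant $\eta$ the projection $\pi|_\eta\colon\eta\to TM/\langle X\rangle$ is a flow-equivariant isomorphism, so the exponential formulas pull back verbatim from Proposition~\ref{cageoformula}.
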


The definition of projectively Anosov vector fields implies:

\begin{proposition}\label{rateca}
Let $X$ be a projectively Anosov vector field and $r_s$ and $r_u$, the expansion rates of stable and unstable directions, respectively, with respect to any Riemannian metric, satisfying the metric condition of Definition~\ref{ca} with $A=1$, which is differentiable in $X$-direction, then 
$$r_u-r_s>0.$$
\end{proposition}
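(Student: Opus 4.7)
The plan is to read the projective Anosov inequality directly through the exponential formula in Proposition~\ref{cageoformula}. Since we normalize with $A=1$, applying the definition to the locally defined unit vector fields $\tilde e_u \in E^u$ and $\tilde e_s \in E^s$ (each of norm $1$) gives, for every $p \in M$ and every $t > 0$,
\[
\frac{\|\tilde\phi_*^t(\tilde e_u(p))\|}{\|\tilde\phi_*^t(\tilde e_s(p))\|} \geq e^{Ct}.
\]

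Next I would rewrite the left-hand side using Proposition~\ref{cageoformula}: since $\tilde\phi_*^t$ preserves the directions $E^u$ and $E^s$ and $\tilde e_u,\tilde e_s$ are unit, we have $\tilde\phi_*^t(\tilde e_u(p)) = \exp\!\bigl(\int_0^t r_u(\phi^s(p))\,ds\bigr)\,\tilde e_u(\phi^t(p))$ and similarly for $\tilde e_s$, so each norm becomes exactly the corresponding exponential. The quotient inequality therefore reduces to
\[
\exp\!\left(\int_0^t \bigl(r_u - r_s\bigr)(\phi^s(p))\,ds\right) \geq e^{Ct},
\]
equivalently $\int_0^t (r_u - r_s)(\phi^s(p))\,ds \geq Ct$ for all $p \in M$ and all $t > 0$.

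Finally I would pass from this integral inequality to the pointwise one. The functions $r_s$ and $r_u$ are continuous on $M$, because the splitting $E^s \oplus E^u$ is continuous and the Riemannian metric is, by hypothesis, differentiable in the $X$-direction (so the quantities defining $r_s, r_u$ vary continuously along orbits and transversally). Dividing by $t$ and letting $t \to 0^+$ in the inequality above, the fundamental theorem of calculus yields $(r_u - r_s)(p) \geq C > 0$ at every $p \in M$, which is the desired conclusion.

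There is essentially no serious obstacle; the only point requiring a touch of care is ensuring that the exponential identities of Proposition~\ref{cageoformula} are being applied along the correct orbit (i.e.\ that the $r_u, r_s$ inside the exponentials are evaluated at $\phi^s(p)$, not at $p$), and that continuity of $r_u - r_s$ justifies passing from the integral average to the pointwise value in the limit $t \to 0^+$.
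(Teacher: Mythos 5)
Your proof is correct and follows essentially the same route as the paper: start from the projective Anosov inequality with $A=1$, take logarithms, and recover the pointwise bound at $t=0$. The only cosmetic difference is that you route through the exponential identity of Proposition~\ref{cageoformula} and then divide the resulting integral inequality by $t$ before letting $t\to 0^+$, whereas the paper differentiates $\ln\|\tilde\phi_*^t(\tilde e_u)\| - \ln\|\tilde\phi_*^t(\tilde e_s)\| \geq Ct$ directly at $t=0$; these are the same computation, and both yield the (slightly stronger) conclusion $r_u - r_s \geq C > 0$.
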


\begin{proof}
Since $X$ is projectively Anosov, the exists a Riemannian metric $g$, such that $\mathcal{L}_X g$ exists and

$$ ||\tilde{\phi}_*^t (\tilde{e}_u)  || / ||\tilde{\phi}_*^t (\tilde{e}_s) || \geq e^{Ct}||\tilde{e}_u || /||\tilde{e}_s ||$$
where $\tilde{\phi}^t$ is the flow of $X$, $||.||$ is the norm on $TM/\langle X \rangle$, induced from $g$, $\tilde{e}_u\in E^u$ and $\tilde{e}_s\in E^s$ are unit vectors, and $C$ is a positive constants. Therefore,
$$\ln{||\tilde{\phi}_*^t (\tilde{e}_u)  ||} -\ln{ ||\tilde{\phi}_*^t (\tilde{e}_s) ||} \geq Ct$$
and
$$r_u-r_s=\frac{\partial}{\partial t}\ln{||\tilde{\phi}_*^t (\tilde{e}_u)  ||} \bigg|_{t=0} -\frac{\partial}{\partial t}\ln{||\tilde{\phi}_*^t (\tilde{e}_s)  ||}\bigg|_{t=0}\geq C>0.$$
\end{proof}

\begin{remark}
In proof of Theorem~\ref{main}, we will also see that converse of the above proposition also holds, in the sense that given a $C^1$ projectively Anosov vector field, for any Riemannian metric with $r_u-r_s>0$, the plane fields $\langle X, \pi^{-1}(\frac{e_u +e_s}{2})\rangle$ and $\langle X, \pi^{-1}(\frac{e_u -e_s}{2})\rangle$ define positive and negative contact structures, respectively, possibly after a perturbation to make the plane fields $C^1$.
\end{remark}

Similar computation, using the definition of Anosov flows and the fact that hyperbolicity of $TM/\langle X \rangle$ implies Anosovity of the flow (\cite{hyplift}, Proposition~1.1), yield:
\begin{proposition}\label{rateanosov}
Let $X$ be a projectively Anosov vector field and $r_s$ and $r_u$. Then $X$ is Anosov, if and only if, with respect to some Riemannian metric, we have $$r_u>0>r_s.$$
\end{proposition}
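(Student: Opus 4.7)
The plan is to follow the two-direction template with the linear Poincar\'e flow on $TM/\langle X\rangle$ as the bridge between Anosovity on $TM$ and hyperbolicity on the quotient bundle.

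For the forward direction, I would assume $X$ is Anosov and first apply the averaging trick $g \mapsto \frac{1}{T}\int_0^T \phi^{t*}g\,dt$, with $T$ chosen large enough to normalize the constant $A$ in Definition~\ref{anosov} to $1$, then take the induced quotient metric on $TM/\langle X\rangle$. The Anosov estimate $\|\phi^t_*(v)\| \geq e^{Ct}\|v\|$ on $E^{uu}$ descends under $\pi$ to $\|\tilde\phi^t_*(\tilde e_u)\| \geq e^{Ct}$ for the unit vector field $\tilde e_u \in E^u$. Taking the logarithm and differentiating at $t=0$ gives $r_u \geq C > 0$ pointwise, and the symmetric argument on $E^{ss}$ yields $r_s \leq -C < 0$.

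For the reverse direction, I would suppose $r_u(p) > 0 > r_s(p)$ for every $p \in M$ with respect to some Riemannian metric differentiable along $X$. Since $M$ is closed and $r_u, r_s$ are continuous (being built from the continuous invariant line fields $E^u, E^s$), compactness upgrades the pointwise signs to uniform bounds $r_u \geq C_1 > 0$ and $r_s \leq -C_2 < 0$. Plugging these into the integrated formula of Proposition~\ref{cageoformula}, namely $\tilde\phi^T_*(\tilde e_u) = e^{\int_0^T r_u(\phi^s(p))ds}\tilde e_u$ and the analogous identity for $\tilde e_s$, produces $\|\tilde\phi^T_*(\tilde e_u)\| \geq e^{C_1 T}$ and $\|\tilde\phi^T_*(\tilde e_s)\| \leq e^{-C_2 T}$. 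This is exactly the hyperbolicity of the linear Poincar\'e flow on $TM/\langle X\rangle$ with invariant splitting $E^u \oplus E^s$ inherited from the projective Anosov structure. Invoking \cite{hyplift}, Proposition~1.1, such hyperbolicity of the linear Poincar\'e flow lifts to an invariant hyperbolic splitting $E^{uu} \oplus E^{ss} \oplus \langle X\rangle$ of $TM$, i.e.\ $X$ is Anosov.

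The step that requires the most care is ensuring the two Riemannian structures --- the one on $TM$ used in the Anosov condition and the one on $TM/\langle X\rangle$ used to define $r_u, r_s$ --- are chosen compatibly so that the exponential estimates and the Lie-derivative formulas transfer between bundles. In the forward direction this compatibility is built in by passing to the induced quotient metric after averaging. In the reverse direction, the genuine lifting work is not done by hand: Proposition~1.1 of \cite{hyplift} is precisely the input that upgrades hyperbolicity of the linear Poincar\'e flow to Anosovity of $X$, so the entire argument reduces to verifying the hypotheses of that theorem from the uniform sign conditions on $r_u$ and $r_s$.
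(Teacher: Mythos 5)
Your proof is correct and follows essentially the same route as the paper: the forward direction mirrors the computation in the proof of Proposition~\ref{rateca} (averaging the metric, then differentiating the logarithm of the norm to read off the signs of $r_u$ and $r_s$), and the reverse direction integrates the rates to recover hyperbolicity of the linear Poincar\'e flow and then invokes Proposition~1.1 of \cite{hyplift} to lift this to Anosovity of $X$, which is exactly the input the paper cites. Your explicit use of compactness to upgrade pointwise sign conditions to uniform bounds is a detail the paper leaves implicit, but it is the right observation and does not change the argument.
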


\begin{remark}

The above computation also shows that both Anosovity and projective Anosovity are preserved under reparametrizations of the flow. More precisely, let $X$ is projectively Anosov vector field with expansion rates of $r_s$ and $r_u$, in the stable and unstable directions, respectively (with respect to some metric). Then, for any positive function $f:M\rightarrow \mathbb{R}^{>0}$, the vector field $fX$ has expansion rates of $fr_s$ and $fr_u$, in the stable and unstable directions, respectively (with respect to the same metric). Therefore, the conditions of both Proposition~\ref{rateca} and Proposition~\ref{rateanosov}, are preserved under such transformations.

\end{remark}


\section{Contact and Symplectic Geometric Characterization of Anosov Flows}\label{4}

The goal of this section is to give a purely contact and symplectic geometric characterization of Anosov flows

\begin{remark}\label{reg}
In \cite{mitsumatsu}, Mitsumatsu shows that the generator vector field of any smooth volume preserving Anosov flow lies in the intersection of a pair of transverse negative and positive contact structures, admitting contact forms $\alpha_-$ and $\alpha_+$, respectively, such that $(\alpha_-,\alpha_+)$ is a Liouville pair (see Definition~\ref{lpair}). Beside the symmetry induced by the existence of an invariant volume form (see \cite{hoz3}), the crucial ingredient is the fact that the weak stable and unstable bundles are known to be at least $C^1$ \cite{hruder}. Therefore the Anosovity of the flow can be translated easily to the differential geometry, and in particular, the contact geometry of the underlying manifold (note that contact structures are at least $C^1$). We remark that although it is known now that these invariant bundles are $C^1$ for any smooth Anosov flow in dimension 3 \cite{regular}, such plane fields are only Hölder continuous, if we want to generalize the result to Anosov flows of lower regularity.
In the following, we improve Mitsumatsu's results by showing that the same holds without any regularity assumption on the weak bundles, using careful approximations of these plane fields. We note that to prove the converse of this statement, these approximations are necessary even for smooth flows, since Anosovity of the flow is not assumed (and the weak bundles of smooth projectively Anosov flows are not necessarily $C^1$). Moreover, we believe that the applications of these approximation techniques can be furthered to other questions about Anosov flows, even when the invariant bundles are $C^1$, since in that case, they facilitate controlling the second variations of these plane fields (and therefore, the associated Reeb vector fields for the underlying contact structures) along the flow. See \cite{hoz3} for such application in the surgery theory of Anosov flows.
\end{remark}

\begin{theorem}\label{main}

Let $\phi^t$ be a flow on the 3-manifold $M$, generated by the $C^1$ vector field $X$. Then $\phi^t$ is Anosov, if and only if, $\langle X \rangle= \xi_+ \cap \xi_-$, where $\xi_+$ and $\xi_-$ are transverse positive and negative contact structures, respectively, and  there exist contact forms $\alpha_+$ and $\alpha_-$ for $\xi_+$ and $\xi_-$, respectively, such that $(\alpha_-,\alpha_+)$ and $(-\alpha_-,\alpha_+)$ are Liouville pairs.

\end{theorem}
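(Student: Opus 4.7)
The plan is to prove both directions by explicitly translating between dynamical data (the expansion rates $r_u, r_s$) and symplectic data (the Liouville pair conditions), extending Mitsumatsu's volume-preserving construction to the general $C^1$ setting.

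For the forward direction, suppose $\phi^t$ is Anosov, so by Proposition~\ref{rateanosov} I may fix a Riemannian metric, differentiable in the $X$-direction, for which $r_u > 0 > r_s$. Since Anosov flows are balanced with flow-invariant $\eta := E^{ss} \oplus E^{uu}$, Proposition~\ref{balancedform} provides unit vector fields $e_u \in E^{uu}$ and $e_s \in E^{ss}$ satisfying the clean relations $\mathcal{L}_X e_u = -r_u e_u$ and $\mathcal{L}_X e_s = -r_s e_s$. After orienting $e_s$ so that $\{X, e_s, e_u\}$ is positively oriented, I define
\[
\alpha_+ := e_u^* - e_s^*, \qquad \alpha_- := e_u^* + e_s^*
\]
in the dual coframe. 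A direct Cartan-formula computation using the Lie derivative relations yields $\alpha_+ \wedge d\alpha_+ = (r_u - r_s)\,\mathrm{vol} > 0$ and $\alpha_- \wedge d\alpha_- = -(r_u - r_s)\,\mathrm{vol} < 0$ by Proposition~\ref{rateca}, so $\alpha_\pm$ are positive and negative contact forms with transverse kernels $\xi_\pm = \ker \alpha_\pm$.

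The crux is verifying the Liouville-pair conditions. For $\omega := d[(1-t)\alpha_- + (1+t)\alpha_+]$ on $M \times [-1,1]$, the 4-forms $(d\alpha_\pm)^2$ and $d\alpha_- \wedge d\alpha_+$ all vanish as they are top-degree on the 3-manifold $M$, leaving
\[
\omega^2 = 2\, dt \wedge (\alpha_+ - \alpha_-) \wedge \big[(1-t)\,d\alpha_- + (1+t)\,d\alpha_+\big].
\]
Substituting $\alpha_+ - \alpha_- = -2\,e_s^*$ and the explicit expressions for $d\alpha_\pm$ obtained from the Lie derivative relations, heavy cancellation yields $\omega^2 = 8 r_u\, dt \wedge \mathrm{vol}_M$, a positive volume form precisely because $r_u > 0$. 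The parallel computation for $(-\alpha_-, \alpha_+)$ uses $\alpha_+ + \alpha_- = 2\,e_u^*$ and produces $\omega^2 = -8 r_s\, dt \wedge \mathrm{vol}_M > 0$, controlled by $r_s < 0$. The backward direction then runs this calculation in reverse: a bi-contact structure supporting $X$ forces projective Anosovity by Proposition~\ref{cabi}, and by working in a frame adapted to the rotational asymptotics of $\xi_\pm$ described in Remark~\ref{geomca} (so that in the limit $\xi_\pm \cap \eta$ approaches $\pi^{-1}(E^{u/s})\cap \eta$ and the forms $\alpha_\pm$ take the normalized shape above), the two Liouville conditions decouple and separately pin down $r_u > 0$ and $r_s < 0$, giving Anosovity by Proposition~\ref{rateanosov}.

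The principal obstacle — the one that separates this result from Mitsumatsu's and necessitates the approximation techniques of Remark~\ref{reg} — is regularity. For a general $C^1$ Anosov flow, $E^{uu}$ and $E^{ss}$ are only Hölder continuous, so $e_u, e_s$ are merely $C^0$ and the naively defined $\alpha_\pm$ fail to be $C^1$, let alone valid contact forms. To handle this I would replace $e_u, e_s$ by $C^\infty$ vector fields $e_u^\epsilon, e_s^\epsilon$ that $C^0$-approximate them and are \emph{nearly} flow-invariant, in the sense that the residuals $\mathcal{L}_X e_u^\epsilon + r_u e_u^\epsilon$ and $\mathcal{L}_X e_s^\epsilon + r_s e_s^\epsilon$ are $C^0$-small. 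The resulting smooth $\alpha_\pm^\epsilon$ then satisfy the Liouville-pair identities with error terms of order $\epsilon$ that are dominated, for sufficiently small $\epsilon$, by the strict inequalities $r_u > 0 > r_s$. The same approximation is relevant in the backward direction, where the abstract $\alpha_\pm$ need only be $C^1$ to begin with. I expect the design of these nearly-flow-invariant smoothings — compatible with the dynamical asymptotics and respecting the strict positivity/negativity of the expansion rates — to be the heart of the proof.
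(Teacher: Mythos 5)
Your idealized forward computation is correct and matches what happens in the smooth/volume-preserving setting: with the balanced frame from Proposition~\ref{balancedform}, the forms $\alpha_\pm = e_u^* \mp e_s^*$ really do satisfy $\omega^2 \propto r_u\, dt\wedge\mathrm{vol}$ and $\tilde\omega^2 \propto -r_s\, dt\wedge\mathrm{vol}$, so the conclusion follows cleanly from $r_u>0>r_s$ (Proposition~\ref{rateanosov}). You also correctly identify that the entire content of the theorem beyond Mitsumatsu lies in the regularity of $E^{ss}, E^{uu}$ when the flow is only $C^1$. The gap is that you state the needed approximation as an expectation rather than a construction, and the construction you sketch --- ``smooth $C^0$-approximations $e_u^\epsilon, e_s^\epsilon$ with the residual $\mathcal{L}_X e_u^\epsilon + r_u e_u^\epsilon$ forced $C^0$-small'' --- is not obviously achievable: a generic $C^0$-approximation of a merely H\"older vector field gives no control whatsoever on its $X$-derivative, and it is exactly this loss of control that makes the problem hard. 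The paper does not build such near-flow-invariant smoothings directly. Instead it (a) $C^1$-approximates the line bundles in $TM/\langle X\rangle$ to get ``pre'' 1-forms, (b) proves a flow-adapted partition-of-unity lemma (Lemma~\ref{app}) that lets it control the $X$-derivative of scalar correction factors without touching transverse derivatives, and then (c) crucially, defines $\alpha^T_u := I^T_u\,\phi^{T*}\alpha^0_u$ and $\alpha^T_s := I^{-T}_s\,\phi^{-T*}\alpha^0_s$ --- flowing the approximations for a long time $T$ and rescaling by the exponential of the integrated expansion rate. It is the push-forward-and-rescale, not the $C^0$-closeness of the approximant, that aligns the kernels with $\pi^{-1}(E^s)$ and $\pi^{-1}(E^u)$ while keeping the relevant $C^1$ quantities (the mixed contact pairings $\alpha^T_u\wedge d\alpha^T_s$, etc.) bounded away from zero; see Claims~\ref{claim1}--\ref{claim3} and Lemmas~\ref{l1}--\ref{l3}. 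Without a device of this kind your approach stalls exactly where you say it will.

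The same regularity issue reappears --- and you do not address it --- in the backward direction: the function $\tau_u$ recording the interpolation time at which $\ker\alpha_t = \pi^{-1}(E^s)$ is a priori only continuous, so evaluating $\omega\wedge\omega$ at $t = \tau_u$ and extracting $r_u>0$ requires approximating $\tau_u$ by $C^1$ functions $\tau_u^T$ and tracking all the resulting error terms in the limit $T\to\infty$, which is again nontrivial. So the overall architecture of your argument is the same as the paper's, but the technical core --- the flow-and-rescale approximation scheme and Lemma~\ref{app} --- is missing, and that core is where essentially all the work in this theorem lives.
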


\begin{proof}

We begin by assuming $\phi^t$ is Anosov.

Let $g$ be the $C^0$ Riemannian metric for which the condition of Anosovity is satisfied and $g(X,E^{ss})=g(X,E^{uu})=g(E^{ss},E^{uu})=0$. After replacing $g$ with $\frac{1}{T}\int_0^T \phi^{t*}g(t)dt$ for large $T$, we can assume the same orthogonality conditions hold, $\mathcal{L}_X g$ exists everywhere and the expansion and contraction of $E^{u}u$ and $E^{ss}$ start immediately (i.e. can assume $A=1$ in the Definition~\ref{anosov}). This means that if $e_s\in E^{ss}$ and $e_u\in E^{uu}$ are unit vector fields, the stable and unstable expansion rates, $r_s$ and $r_u$ are defined and are negative and positive, respectively (Proposition~\ref{rateanosov}).
Moreover, choose such $e_s$ and $e_u$ so that $(e_s,e_u,X)$ is an oriented basis for $M$ as in Figure~(2).

\begin{figure}\label{fig1}
  \center \begin{overpic}[width=4cm]{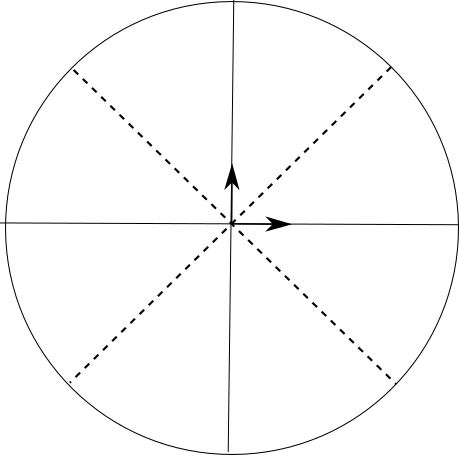}
  \put(8,100){$\xi_-$}
  \put(100,100){$\xi_+$}
    \put(54,117){$E^u$}
      \put(115,54){$E^s$}
       \put(71,60){$e_s$}
        \put(59,71){$e_u$}

  \end{overpic}
\caption{$TM/\langle X \rangle \simeq E^s\oplus E^u$}
\end{figure}

Let $\alpha^{pre}_u$ and $\alpha^{pre}_s$ be approximations of ${\hat e}_u$ and ${\hat e}_s$, the $g$-duals of $e_u$ and $e_s$, respectively, by $C^1$ 1-forms, such that $\alpha^{pre}_u(X)=\alpha^{pre}_s(X)=0$. Note that we are not expecting {\em good} approximations at this point -these are only $C^0$-approximations-, as we only require to have $\alpha^{pre}_u(e_u)>0$ and $\alpha^{pre}_s(e_s)>0$. To have this, we only need to appropriately approximate $E^s$ or $E^u$ as a line bundle in $TM/\langle X \rangle$. The sum of such line bundles with $\langle X \rangle$ yields the desired $C^1$ plane fields. Eventually, we will obtain our {\em good} approximations by flowing $\alpha^{pre}_u$ and $\alpha^{pre}_s$ along the flow, as we will later see.


There exist continuous functions $f_u$ and $f_s$ which are differentiable in direction of $X$ and $f_u\alpha^{pre}_u (e_u)=f_s\alpha^{pre}_s(e_s)=1$. We can $C^0$-approximate $f_u$ and $f_s$ with smooth functions $\tilde{f}_u$ and $\tilde{f}_s$, such that $|X \cdot f_u -X\cdot \tilde{f}_u|$ and $|X\cdot f_s -X\cdot \tilde{f}_s|$ are arbitrary small. This can be achieved using a delicate partition of unity, which respects the differentiation in the direction of the flow (as we will shortly see, in fact we will need a parametric version of such approximation).

\begin{lemma}\label{app}
Let $\phi^t$ be any non-singular $C^{1}$ flow on a closed manifold $M$ (of arbitrary dimension) and $f$ be any $X$-differentiable continuous function. For any $\epsilon>0$, there exists a $C^1$ function $\tilde{f}$, such that we have
$$|f-\tilde{f}|<\epsilon\ \ \ \ \ \text{and}\ \ \ \ \ |X\cdot f-X\cdot \tilde{f}|<\epsilon$$
everywhere.
\end{lemma}

\begin{proof}
We can find such function using local solutions and a partition of unity. However, since we want to control the derivative of such function in the direction of the flow, we need to control the parameters of our partition of unity carefully.

Fix $\epsilon>0$. Consider the collection $\{ (U_i,V_i,\Sigma_i,\tau_i)\}_{1\leq i \leq N}$, where $\Sigma_i$s are $C^1$ local sections of the flow $\phi^t$ (which are open disks) and for some $\epsilon_i>0$, we can define the open flowboxes $U_i:=\{\phi^t(x)\ \text{s.t.}\ x\in \Sigma_i, -\tau_i<t<\tau_i \}$ and $V_i:=\{\phi^t(x)\ \text{s.t.}\ x\in \Sigma_i, -\frac{\tau_i}{2}<t<\frac{\tau_i }{2}\}\subset U_i$, such that $\{ V_i\}_{1\leq i\leq N}$ is a covering for $M$. Notice, that we can find such covering, since for any $x\in M$, we can find such $(U_x,V_x,\Sigma_x,\tau_x)$, where $x\in \Sigma_x$ and $\epsilon_x$ is sufficiently small. Compactness of $M$ implies that finitely many of $V_x$ s cover $M$. Therefore, we get the desired collection.

Let $\{ \psi_i \}_{1\leq i\leq N}$ be a partition of unity with respect to such covering. In particular, we have $supp(\psi_i)\subset V_i$. Note that there exists a compact subset $\tilde{\Sigma}_i\subset \Sigma_i$, such that $supp(\psi_i)\subset \{\phi^t(x)\ \text{s.t.}\ x\in \tilde{\Sigma}_i, -\frac{\tau_i}{2}<t<\frac{\tau_i }{2}\}$. Define $h_i:\mathbb{R}\rightarrow \mathbb{R}$ to be a bump function such that $h_i([\frac{\tau_i}{2},\frac{\tau_i}{2}])=1$, $h_i((-\infty,{\tau_i}]\cup[\tau_i,+\infty))=0$ and $h_i$ is monotone elsewhere. Finally, let $C_i:=\sup{|\frac{dh_i}{dt}|}$ and choose some positive real number $\delta_i\in \mathbb{R}$, such that $\max{\{\delta_i,C_i\delta_i\}}<\frac{\epsilon}{N}$.

We can write $f=\sum_{1\leq i\leq N}\psi_i f$. Let $g_i$ be a $C^1$ function defined on $\Sigma_i$, such that $|g_i-\psi_i f|<\delta_i\big|_{\Sigma_i}$ and $g_i=0$ on $\Sigma_i/ \tilde{\Sigma}_i$. Now we can extend $g_i$ to $U_i$ by solving the differential equation $X\cdot g_i=X\cdot (\psi_i f)$ on $\Sigma_i$. Note that we have $|g_i-\psi_i f|<\delta_i$, everywhere on $U_i$. In particular, $|g_i|<\delta_i$ on $U_i \backslash V_i$.

We can then define $\tilde{g}_i$ on $M$, by letting $\tilde{g}_i(\phi^t(x))=h_i(t)g_i(\phi^t(x))$ for any $x\in\Sigma_i$ and $\tilde{g}_i=0$ on $M\backslash U_i$. Note that $|\tilde{g}_i -\psi_if|<\delta_i$. Moreover, since $X\cdot\tilde{g}_i=(X\cdot h_i)g_i+h_i(X\cdot g_i)$, we have $|X\cdot\tilde{g}_i-X\cdot(\psi_i f)|=|X\cdot g_i-X\cdot(\psi_i f)|=0$ on $\tilde{V}_i$, $|X\cdot\tilde{g}_i-X\cdot(\psi_i f)|=|(X\cdot h_i) g_i|<C_i\delta_i$ on $U_i\backslash \tilde{V}_i$ and $X\cdot \tilde{g}_i=X\cdot (\psi_i f)=0$ elsewhere. Therefore, we have $|X\cdot\tilde{g}_i-X\cdot(\psi_i f)|<C_i\delta_i$ everywhere. Now, we can see that $\tilde{f}:=\sum_{1 \leq i \leq N}\tilde{g}_i$ is the desired function, since it is $C^1$ by construction and we have
$$|\tilde{f}-f|<\sum_{1\leq i \leq N} |\tilde{g}_i-\psi_if|<\sum_{1 \leq i \leq N}\delta_i<\epsilon$$
and
$$|X\cdot\tilde{f}-X\cdot f|<\sum_{1\leq i \leq N} |X\cdot\tilde{g}_i-X\cdot(\psi_if)|<\sum_{1 \leq i \leq N}C_i\delta_i<\epsilon.$$

\end{proof}

In the following, when there is no confusion, for any point $x\in M$, we refer to $r_s(x)$ by $r_s$ or $r_s(0)$ and to $r_s(\phi^t(x))$ by $r_s(t)$. Similarly, for other functions in this proof.

Using the above lemma and fixing $\epsilon>0$, for any $T\geq 0$ we can find functions $\tilde{f}^T_u$ and $\tilde{f}^T_s$ which are differentiable along $X$, satisfy
$$\begin{cases}
|\tilde{f}^T_u-f_u|<\frac{\epsilon}{T} \ \ \text{and} \ \
|X\cdot \tilde{f}^T_u-X\cdot f_u|<\frac{\epsilon}{T} \\
|\tilde{f}^T_s-f_s|<\frac{\epsilon}{T} \ \ \text{and} \ \
|X\cdot \tilde{f}^T_s-X\cdot f_s|<\frac{\epsilon}{T}
\end{cases}
$$
everywhere
and furthermore, the 1-forms defined as
$$\begin{cases}
\alpha^T_u:=I^T_u\phi^{T*}(\tilde{f}^T_u\alpha_u^{pre}) \ \ \ \text{where} \ \ \ I^T_u:=e^{-\int_0^T r_u (t)dt} \\
\alpha^T_s:=I^{-T}_s\phi^{-T*}(\tilde{f}^{T}_s\alpha_s^{pre}) \ \ \ \text{where} \ \ \ I^T_s:=e^{-\int_0^T r_s (t)dt} \\
\end{cases},$$
are $C^1$. More specifically, we can naturally identify the conformal class of the 1-form $\phi^{T*}(\alpha_u^{pre})$ with the space of functions on $M$ and then apply Lemma~\ref{app} to get an approximation -in the above sense- of $I^T_u\phi^{T*}(f_u\alpha_u^{pre})$ by a $C^1$ 1-form which we denote by $\alpha_u^T$. Then, there exists a function $\tilde{f}_u^T$ using which we can write $\alpha_u^T=I^T_u\phi^{T*}(\tilde{f}^T_u\alpha_u^{pre})$, since $\ker{\alpha_u^T}=\ker{\phi^{T*}(\alpha_u^{pre})}$ (note that we fixed the conformal class of a 1-form with $C^1$ kernel while approximating). Similarly, we can construct $\alpha_s^T$. It is helpful to think of the approximating 1-forms $\alpha_u^T$ and $\alpha_s^T$ in terms of their decomposition into $I_u^T$ and $I_s^T$, which contains the expansion information, and the functions $\tilde{f}_u^T$ and $\tilde{f}_s^T$, whose role is to adjust the regularity of the resulting $\alpha_u^T,\alpha_s^T$ to be at least $C^1$. All these terms are a priori not $C^1$, while differentiable along the flow, since $I_u^T$ is in general only $C^0$ and differentiable along the flow. Note that $\tilde{f}_u^T$ is exactly as regular as $I^T_u$ and in particular, its regularity class depends on $T$. It turns out, as we will see, that the only derivatives of these 1-forms appearing in our computations are along the flow. Therefore, it is convenient to use such formulation which allows to keep track of the expansion information.

This implies that the quantities $|\tilde{f}_u^T\alpha_u^{pre}(e_u)-1|$, $|X\cdot (\tilde{f}_u^T\alpha_u^{pre}(e_u)|$ and the like for the stable bundle can be taken to be arbitrarily uniformly small. We record this fact by stating that for any fixed $T$, we have the following uniform convergences
\begin{equation}\label{cond1}
\lim_{T\rightarrow \infty} [\tilde{f}_u^T\alpha_u^{pre}(e_u)-1]=
\lim_{ T\rightarrow \infty} [X\cdot (\tilde{f}_u^T\alpha_u^{pre}(e_u))]=0
\end{equation}
and
\begin{equation}\label{cond2}
\lim_{T\rightarrow \infty} [\tilde{f}_s^T\alpha_s^{pre}(e_s)-1]=
\lim_{T\rightarrow \infty} [X\cdot (\tilde{f}_s^T\alpha_s^{pre}(e_s))]=0,
\end{equation}
by which we mean that there exists a family of functions $\tilde{f}_u^T$ and $\tilde{f}_s^T$ for $T>0$ approximating $f_u$ and $f_s$ in the above sense, where the uniform convergence, when $T\rightarrow \infty$, to $f_u$ and $f_s$ can be described as \ref{cond1} and \ref{cond2}.





\begin{claim}\label{claim1}
We have
$$\begin{cases}
\alpha^T_u(e_u(0))=\tilde{f}_u^T\alpha^{pre}_u(e_u(T)) \\
\alpha^T_s(e_s(0))=\tilde{f}_s^T\alpha^{pre}_s(e_s(T)).
\end{cases}
$$
\end{claim}
\begin{proof}
$$\alpha^T_u(e_u(0))=I^T_u \tilde{f}_u^T\alpha^{pre}_u(\phi^T_*(e_u(0)))=I^T_u \tilde{f}_u^T\alpha^{pre}_u(\frac{1}{I^T_u}e_u(T))=\tilde{f}_u^T\alpha^{pre}_u(e_u(T)),$$
where the middle equality is implied by Proposition~\ref{balancedform}. Other implication follows similarly.
\end{proof}

Consequently, by Equations~\ref{cond1} and \ref{cond2}, we have the uniform convergences
$$\begin{cases}
\lim_{T\rightarrow \infty} [\alpha_u^T(e_u)-1]=
\lim_{ T\rightarrow \infty}[ X\cdot (\alpha_u^{T}(e_u)]=0 \\
\lim_{T\rightarrow \infty} [\alpha_s^{T}(e_s)-1]=
\lim_{T\rightarrow \infty} [X\cdot (\alpha_s^{T}(e_s)]=0
\end{cases}.$$
%


%

We then prove the following elementary facts about the functions $I_u^T$ and $I_s^T$.

\begin{claim}\label{claim2}
$$\lim_{T\rightarrow +\infty}\frac{I^T_u}{I^T_s}=\lim_{T\rightarrow +\infty}\frac{I^{-T}_s}{I^{-T}_u}=0.$$
\end{claim}

\begin{proof}
$$\lim_{T\rightarrow +\infty}\frac{I^T_u}{I^T_s}=\lim_{T\rightarrow +\infty} e^{\int_0^T r_s (t)-r_u(t)dt}=0.$$
The last equality follows from projective Anosovity of $X$ (Proposition~\ref{rateca}), implying $r_u-r_s>0$.

Similarly, $$\lim_{T\rightarrow +\infty}\frac{I^{-T}_s}{I^{-T}_u}=\lim_{T\rightarrow +\infty} e^{\int_{-T}^0 r_s (t)-r_u(t)dt}=0.$$.
\end{proof}

\begin{claim}\label{claim3}
$$X\cdot I_u^T =[ r_u(0)-r_u(T)   ] I_u^T;$$
$$X\cdot I_s^T = [ r_s(0)-r_s(T)  ] I_s^T.$$
\end{claim}

\begin{proof}
$$X\cdot I_u^T =\frac{\partial}{\partial h} e^{-\int_0^T r_u (t+h)dt} \bigg|_{h=0}$$
$$=[-\int_0^T r'_u (t)dt]I^T_u=[ r_u(0)-r_u(T)   ] I_u^T.$$
The other implication follows similarly.

\end{proof}

Now, using the above calculations, we can show that $\ker{\alpha^T_u}$ and $\ker{\alpha^T_s}$, $C^0$-converge to $\pi^{-1}(E^s)=E^{ss} \oplus \langle X \rangle$ and $\pi^{-1}(E^u)=E^{uu} \oplus \langle X \rangle$, respecting certain $C^1$-quantities.

\begin{lemma}\label{l1}
We have $$\lim_{T\rightarrow +\infty} \ker{\alpha^T_u}=\pi^{-1}(E^s)$$ and $$\lim_{T\rightarrow +\infty} \ker{\alpha^T_s}=\pi^{-1}(E^u).$$
\end{lemma}

\begin{proof}
First compute
$$\lim_{T\rightarrow +\infty} \alpha^T_u(e_s(0))=\lim_{T\rightarrow +\infty} I^T_u \tilde{f}_u^T\alpha^{pre}_u (\phi^T_* e_s(0))=\lim_{T\rightarrow +\infty} \frac{I^T_u}{I^T_s} \tilde{f}_u^T\alpha^{pre}_u (e_s(T))=0.$$

The last equality follows from Claim~\ref{claim2} and the fact that $\tilde{f}_u^T\alpha^{pre}_u (e_s)$ is bounded. Similarly, 
$$\lim_{T\rightarrow +\infty} \alpha^T_s(e_u(0))=0.$$
Claim~\ref{claim1} and the fact that $\alpha^T_u(X)=\alpha^T_s(X)=0$ finish the proof.
\end{proof}

Now, we see that certain $C^1$-variations behave nicely under such limiting procedure.

\begin{lemma}\label{l2}
We have the uniform convergence
$$\lim_{T\rightarrow +\infty} \alpha^T_u \wedge d\alpha^T_u=\lim_{T\rightarrow +\infty} \alpha^T_s \wedge d\alpha^T_s=0.$$
\end{lemma}

\begin{proof}
Using Claim~\ref{claim1}, Claim~\ref{claim2} and Claim~\ref{claim3}, compute
$$(\alpha^T_u \wedge d\alpha^T_u)(e_s,e_u,X)=\alpha^T_u(e_s) \left[-X\cdot (\alpha_u^T(e_u))+\alpha_u^T(\mathcal{L}_Xe_u)\right] - \alpha_u^T(e_u)\left[X\cdot (\alpha_u^T(e_s))-\alpha_u^T(\mathcal{L}_X e_s )\right]$$
$$=\tilde{f}_u^T\alpha^{pre}_u(e_s(T))\left[ -X\cdot(\tilde{f}_u^T\alpha^{pre}_u(e_u(T)))-r_u(0) \tilde{f}_u^T\alpha^{pre}_u (e_u(T)) \right] \frac{I^T_u}{I^T_s} $$
$$-\tilde{f}_u^T\alpha^{pre}_u(e_u(T))\bigg[\left(r_u(0)-r_u(T)-r_s(0)+r_s(T)\right)\tilde{f}_u^T\alpha^{pre}_u(e_s(T))$$
$$+X\cdot (\tilde{f}_u^T\alpha^{pre}_u(e_s(T)))+r_s(0) \tilde{f}_u^T\alpha^{pre}_u(e_s(T))\bigg]\frac{I^T_u}{I^T_s}=A(T,x)\frac{I^T_u}{I^T_s},$$
where $A(T,x)$ is a bounded function of $T$ and $x$, thanks to Lemma~\ref{l2} and the uniform convergence of \ref{cond1} and \ref{cond2}.

Claim~\ref{claim2} concludes the implication and similar computation for $\alpha^T_s \wedge d\alpha^T_s$ finishes the proof.
\end{proof}

\begin{remark}
Using Proposition~\ref{cageoformula2}, one can easily check that Claim \ref{claim1}, \ref{claim2}, \ref{claim3} and Lemma~\ref{l1}, \ref{l2} also hold for similar approximations, when the flow is merely projectively Anosov.
\end{remark}

\begin{lemma}\label{l3}
We have the uniform convergence
$$\begin{cases}
 \lim_{T\rightarrow \infty}\alpha^T_u \wedge d\alpha^T_s=r_s\ \hat{e}_s\wedge \hat{e}_u \wedge \hat{X} \\
 \lim_{T\rightarrow \infty} \alpha^T_s \wedge d\alpha^T_u=-r_u\ \hat{e}_s\wedge \hat{e}_u \wedge \hat{X}
\end{cases}.$$.
\end{lemma}

\begin{proof}
$$(\alpha^T_u \wedge d\alpha^T_s)(e_s,e_u,X)=\alpha_u^T (e_s)[-X\cdot (\alpha^T_s(e_u))-\alpha^T_s(-\mathcal{L}_X e_u)]+\alpha^T_u (e_u)[X\cdot (\alpha^T_s(e_s))-\alpha^T_s(\mathcal{L}_X e_s)]$$
$$=\frac{I^T_u}{I^T_s}\frac{I^{-T}_s}{I^{-T}_u} B(T,x)+ \tilde{f}_u^T\alpha^{pre}_u (e_u(T))[X\cdot ( \tilde{f}_s^T\alpha^{pre}_s(e_s(T)))+r_s(0)  \tilde{f}_s^T\alpha^{pre}_s(e_s(T))],$$
where $B(T,x)$ is a bounded function of $T$ and $x$, thanks to the uniform convergence of \ref{cond1} and \ref{cond2}. Using Claim~\ref{claim2}, the first term vanishes in the limit and we will have the uniform convergence
$$\alpha^T_u \wedge d\alpha^T_s\rightarrow r_s\ \hat{e}_s\wedge \hat{e}_u \wedge \hat{X}.$$

Similar computation proves the other limit.
\end{proof}

The upshot of the previous lemmas is that, even though the invariant bundles have low regularity (here only $C^0$ or Hölder continuous), they can be {\em appropriately} approximated by plane fields as regular as the flow (here $C^1$), where {\em appropriate} means controlling variations of the metric along the flow, which is sufficient to control certain geometric quantities. For instance, note that in the computation of $\alpha^T_u\wedge d\alpha_s^T$ in Lemma~\ref{l3}, the only partial derivatives we need to control is the term $\iota_Xd\alpha^T_s=\mathcal{L}_X \alpha^T_s$. Also, observe that $\tilde{f}_s^T\alpha_s^{pre}(e_s(T))$ and $X\cdot(\tilde{f}_s^T\alpha_s^{pre}(e_s(T)))$ can be taken to be uniformly close to $1$ and $0$ respectively, as a consequence of Lemma~\ref{app} (see the discussion before Claim~\ref{claim1}). For the sake of future reference, we codify the approximation result of the previous lemmas in the following:

\begin{lemma}(Approximation of the invariant plane fields)\label{cod}
Choosing any norm on $E^u$ which is differentiable along the flow, inducing the dual 1-form $\hat{e}_u$ with expansion rate $r_u$, i.e. $\mathcal{L}_X\hat{e}_u=r_u\hat{e}_u$, and any $\epsilon>0$, there exists $C^1$ 1-form $\alpha_u$ such that $||\alpha_u-\hat{e}_u||<\epsilon$ and $||\mathcal{L}_X \alpha_u-\mathcal{L}_X\hat{e}_u||=||\mathcal{L}_X \alpha_u-r_u\hat{e}_u||<\epsilon$.
\end{lemma}


Now we have all the ingredients to finish the proof.

Let $\alpha_+^T:=\frac{1}{2} (\alpha^T_u-\alpha^T_s)$ and $\alpha_-^T:=\frac{1}{2} (\alpha^T_u + \alpha^T_s)$. The goal is to show that $(\alpha_-^T,\alpha_+^T)$ and $(-\alpha_-^T,\alpha_+^T)$ are Liouville pairs, for large $T$. By Lemma~\ref{l2} and \ref{l3}, for large $T$:

$$\alpha_+^T \wedge d\alpha_+^T= \frac{1}{4}\left( \alpha_u^T \wedge d\alpha^T_u -\alpha^T_u \wedge d\alpha_s^T -\alpha^T_s \wedge d\alpha_u^T +\alpha^T_s \wedge d\alpha_s^T\right)>0.$$

Therefore, $\alpha_+^T$ is a positive contact form for large $T$. Similar computation shows that $\alpha_-^T$ is a negative contact form for large $T$.

To show that $(\alpha_-^T,\alpha_+^T)$ is a Liouville pair, we need to show that $\omega^T:=d\alpha^T$ is a symplectic form on $M\times [-1,1]$, where $\alpha^T:=\{\alpha_t^T \}_{t\in [-1,1]}$ and $\alpha^T_t:=(1-t)\alpha_-^T +(1+t)\alpha_+^T=\alpha_u^T -t\alpha^T_s$.

Compute for large $T$:
$$\omega^T \wedge \omega^T= (d\alpha_u^T -td\alpha^T_s -dt\wedge \alpha^T_s)\wedge (d\alpha_u^T -td\alpha^T_s -dt\wedge \alpha^T_s)=$$
$$=dt\wedge \{ -2 \alpha^T_s \wedge d\alpha^T_u +2t \alpha_s^T \wedge d\alpha_s^T \}>0.$$

Then, Lemma~\ref{l2} and \ref{l3} 
imply that $\omega^T$ is symplectic for large $T$. More accurately, by Lemma~\ref{cod}, choices can be made such that $\omega^T\wedge \omega^T$ is arbitrarily close to $2r_u dt\wedge \hat{e}_s\wedge \hat{e}_u \wedge \hat{X}$.

To show that $(-\alpha_-^T,\alpha_+^T)$ is a Liouville pair, let $\tilde{\omega}^T:=d\tilde{\alpha}^T$, where $\tilde{\alpha}^T:=\{\tilde{\alpha}_t^T \}_{t\in [-1,1]}$ and $\tilde{\alpha}^T_t:=-(1-t)\alpha_-^T +(1+t)\alpha_+^T=\alpha_s^T -t\alpha^T_u$. Similar computation shows:
$$ \tilde{\omega}^T \wedge \tilde{\omega}^T=dt \wedge \{-2 \alpha^T_u \wedge d\alpha^T_s +2t \alpha_u^T \wedge d\alpha_u^T \} >0,$$
implying that $\tilde{\omega}^T$ is symplectic for large $T$. In fact, by Lemma~\ref{cod}, choices can be made such that $\tilde{\omega}^T\wedge \tilde{\omega}^T$ is arbitrarily close to $-2r_s dt\wedge \hat{e}_s\wedge \hat{e}_u \wedge \hat{X}$. This finishes the proof of one implication.

\vskip1cm

We now consider the other implication.

Note that by Proposition~\ref{cabi}, such flow is projectively Anosov and therefore, we have the splitting $TM/\langle X \rangle\simeq E^s\oplus E^u $. Without loss of generality, assume $\alpha_+$ and $\alpha_-$ induce the same orientation on $\pi^{-1}(E^u)$ and opposite orientations on $\pi^{-1}(E^s)$ (recall that $\pi$ is the fiberwise projection $TM\rightarrow TM/\langle X \rangle$). The goal is to show that for any point in $M$, when constructing the Liouville form by linearly interpolating $\alpha_+$ and $\alpha_-$ (or $-\alpha_-$), the symplectic condition at the {\em time}, when the kernel of the interpolation is $\pi^{-1}(E^s)$ (or $\pi^{-1}(E^u)$), implies $r_u>0$ (or $r_s<0$). Of course, such {\em time} is a continuous function on the manifold. But it turns out that, thanks to the openness of the symplectic condition, suitable approximation by a $C^1$ function suffices.

Orient $\pi^{-1}(E^u)$ such that $\alpha_+(\pi^{-1}(E^u))>0$ and $\alpha_-(\pi^{-1}(E^u))>0$. Also orient $\pi^{-1}(E^s)$ such that $\alpha_+(\pi^{-1}(E^s))<0<\alpha_-(\pi^{-1}(E^s))$.

Putting such regularity concerns aside, the idea is to let $t=\tau_u$ be the time in the interpolation when $\ker{\alpha}=\ker{\{(1-t)\alpha_-+(1+t)\alpha_+\}}$ coincides with $\pi^{-1}(E^s)$. We use $\alpha|_{\{ t=\tau_u\}}$ to define a norm on $E^u$ and show that $d\alpha \wedge d\alpha|_{\{ t=\tau_u\}}=r_u \Omega$, where $r_u$ is the expansion rate with respect to such norm and $\Omega$ is some appropriate positive volume form. Therefore, the symplectic condition for the Liouville pair $(\alpha_-,\alpha_+)$ implies the absolute expansion in the $E^u$-direction. Similar argument about the Liouville pair $(-\alpha_-,\alpha_+)$ implies $r_s<0$, i.e. the absolute contraction in the $E^s$-direction. In practice, dealing with regularity subtleties is more delicate. 

Let $\tau_u(x)$ be the (continuous) function such that
$$\ker{\{ (1-\tau_u)\alpha_- +(1+\tau_u) \alpha_+\}}=\pi^{-1}(E^s),$$
and set
$$\alpha_u:=(1-\tau_u)\alpha_- +(1+\tau_u)\alpha_+.$$

Note that $\tau_u$ and $\alpha_u$ are continuously differentiable along the flow, as $\alpha_-$ and $\alpha_+$ are $C^1$.
Consider a transverse plane field $\eta$ and define $||.|| \big|_{\pi^{-1}(E^u)}$ such that for a unit $e_u$ orienting $\pi^{-1}(E^u)\cap\eta$, we have $\alpha_u(e_u)=1$, noting that $e_u$ is also continuously differentiable along the flow and therefore, the associated expansion rate $r_u$ is defined. Now,
we can rewrite $\alpha_t:=(1-t)\alpha_- +(1+t)\alpha_+$ as
$$\alpha_t =\alpha_u - (t-\tau_u)\beta_s;$$
where $\beta_s =\frac{-\alpha_+ +\alpha_-}{2}$ is a $C^1$ 1-form and $\beta_s(\pi^{-1}(E^s))>0$.

Similarly, considering the Liouville pair $(-\alpha_-,\alpha_+)$, define $||.|| \big|_{\pi^{-1}(E^s)}$ and let $e_s$ be the unit vector orienting $\pi^{-1}(E^s)\cap\eta$. Note that $(e_s,e_u,X)$ is an oriented basis for $M$ (see Figure~2). Using the vector bundle isomorphism $TM/\langle X \rangle\simeq \eta$, we can extend such norm to a Riemannian metric $\hat{g}$ on $TM/\langle X\rangle$ with $\hat{g}(E^s,E^u)=0$.

Let $\alpha^T_u$ and $\alpha^T_s$ be the  $C^0$-approximations of $\alpha_u$ and $\alpha_s$, which are $C^1$, in the same fashion as above
and define $\tau_u^T$, such that
$$\ker{\alpha_u^T}=\ker{\{ (1-\tau^T_u)\alpha_- +(1+\tau^T_u) \alpha_+\}}.$$

Note that $\tau^T_u$ is $C^1$ and we can rewrite
$$\alpha_t=f_u^T \alpha_u^T-(t-\tau_u^T)\beta_s$$
for continuous function $f_u^T=\alpha_t (e_u) \big |_{t=\tau^T_u}$ (but $f_u^T \alpha_u^T$ is $C^1$, since every other term in the above equation is $C^1$).

Observe that $$\lim_{T\rightarrow +\infty} \tau_u^T=\tau_u,$$
since $\lim_{T\rightarrow +\infty}\ker{\alpha^T_u}=\ker{\alpha_u}$ (Lemma~\ref{l1}). Plug in $e_u$ into $\alpha_t$ at $t=\tau^T_u$ to get
$$f_u^T \alpha^T_u(e_u)=1+(\tau_u -\tau_u^T)\beta_s(e_u)$$
and in particular, \begin{equation}\label{eq1}\lim_{T\rightarrow +\infty}f_u^T\alpha^T_u(e_u)=1. \end{equation}

Similarly, plug in $e_s$ into $\alpha_t$ at $t=\tau^T_u$ to get
$$\alpha^T_u (e_s)=\frac{(\tau_u -\tau_u^T)\beta_s(e_s)}{f^T_u}.$$

Compute
$$X\cdot (\alpha_u^T(e_s))=$$
$$\frac{[X\cdot (\tau_u-\tau_u^T)\beta_s(e_s)+(\tau_u-\tau_u^T)X\cdot (\beta_s(e_s))]f_u^T\alpha^T_u(e_u)}{(f_u^T\alpha^T_u(e_u))^2}$$
$$-\frac{[X\cdot (\tau_u-\tau_u^T)\beta_s(e_u)+(\tau_u-\tau_u^T)X\cdot (\beta_s(e_u))](\tau_u -\tau_u^T)\beta_s(e_s)\alpha_u^T(e_u)}{(f_u^T\alpha^T_u(e_u))^2}$$
$$=\frac{A(x)(\tau_u -\tau_u^T)+B(x) X\cdot (\tau_u -\tau_u^T)}{(f_u^T\alpha^T_u(e_u))^2}$$
for bounded functions $A$ and $B=f_u^T \beta_s(e_s)\alpha^T_u(e_u)+\beta_s(e_u)\beta_s(e_s)\alpha^T_u(e_u)(\tau_u -\tau_u^T)$.

Since $\lim_{T\rightarrow +\infty}X\cdot (\alpha_u^T(e_s))=0$ and $B$ is non-zero for large $T$, we have
$$\lim_{T\rightarrow +\infty} X\cdot (\tau_u -\tau_u^T)=0,$$
implying
\begin{equation}\label{eq2}\lim_{T\rightarrow +\infty}X\cdot [f_u^T\alpha^T_u(e_u)]=\lim_{T\rightarrow +\infty} \{ X\cdot (\tau_u -\tau_u^T)\beta_s(e_u) + (\tau_u -\tau_u^T)X\cdot (\beta_s(e_u))\}=0.\end{equation}

Also note that
\begin{equation}\label{eq3}\lim_{T\rightarrow +\infty}X\cdot [f_u^T\alpha^T_u(e_s)]=\lim_{T\rightarrow +\infty}\{ X\cdot (\tau_u -\tau_u^T)\beta_s(e_s) + (\tau_u -\tau_u^T)X\cdot (\beta_s(e_s))\}=0.\end{equation}

Now if $\alpha:=\{\alpha_t\}_{t\in[-1,1]}$ and $\omega:=d\alpha$, compute

$$\omega=d(f_u^T \alpha_u^T) -[dt-d\tau^T_u]\beta_s -(t-\tau^T_u)d\beta_s;$$
$$0<\omega \wedge \omega \big|_{t=\tau_u^T}=dt \wedge 2\{ -\beta_s \wedge d(f^T_u\alpha^T_u)+(t-\tau_u^T)\beta_s\wedge d\beta_s\}\big|_{t=\tau_u^T}=dt \wedge \{ -2\beta_s \wedge d(f_u^T \alpha^T_u)\}.$$

Compute
$$[\beta_s \wedge d(f^T_u\alpha^T_u)](e_s,e_u,X)=$$
$$=\beta_s(e_s)[-X.(f_u^T\alpha^T_u(e_u))-f^T_u\alpha^T_u(-\mathcal{L}_Xe_u)]-\beta_s(e_u)[X.(f^T_u\alpha_u^T(e_s))-f^T_u\alpha^T_u(-\mathcal{L}_X e_s)]$$

Now by \eqref{eq1}, \eqref{eq2}, \eqref{eq3} and Proposition~\ref{cageoformula2}:
$$0<\omega \wedge \omega \big|_{t=\tau_u}=\lim_{T\rightarrow +\infty} \omega \wedge \omega \big|_{t=\tau_u^T}= $$
$$=\lim_{T\rightarrow +\infty}-dt\wedge\beta_s \wedge d(f^T_u\alpha^T_u)=\beta_s(e_s)r_u \ dt\wedge {\hat e}_s \wedge {\hat e}_u \wedge \hat{X}.$$

Therefore, $r_u>0$.

Similarly, it can be shown that $r_s<0$. This shows hyperbolicity of the splitting $TM/\langle X \rangle=E^s\oplus E^u$. By Proposition~1.1 of \cite{hyplift}, this is equivalent to the flow being Anosov.
\end{proof}

\begin{remark}\label{topprop}
By Theorem~\ref{main}, if $(\xi_-,\xi_+)$ is a supporting bi-contact structure for an Anosov flow, then $\xi_-$ and $\xi_+$ are tight (Theorem~\ref{gromov}), strongly symplectically fillable \cite{elfilling,etfilling} and contain no Giroux torsion\cite{gay}. Furthermore, although in general universal tightness is not achieved from symplectic fillability, since any lift of an Anosov flow to any cover, is also Anosov, $\xi_-$ and $\xi_+$ are universally tight in this case.
\end{remark}

\begin{remark}\label{per}
We note that Theorem~\ref{main} provides new geometric tools for understanding the periodic orbits of Anosov flows, in particular regarding the knot theory of such periodic orbits, which there are many unanswered questions about \cite{knotanosov}. More precisely, if $\gamma$ is a periodic orbit of an Anosov flow with supporting bi-contact structure $(\xi_-,\xi_+)$, then $\gamma$ is a {\em Legendrian knot} for both $\xi_-$ and $\xi_+$. Furthermore, $\gamma\times I$ is an {\em exact Lagrangian} in both Liouville pairs, constructed on $M\times I$. These are standard and well-studied objects in contact and symplectic topology and now, those methods can be transferred to the study of such periodic orbits.
\end{remark}

The following theorem shows that the construction of Liouville pairs for Anosov flows in Theorem~\ref{main} can be based on an arbitrary choice of supporting bi-contact structure.

\begin{theorem}
Suppose $\phi^t$ is an Anosov flow on a 3-manifold $M$, generated by a $C^1$ vector field $X$ and let $(\xi_-,\xi_+)$ be any supporting bi-contact structure for $X$. Then, for any choice of orientations for $\xi_-$ and $\xi_+$, there exist contact forms $\alpha_-$ and $\alpha_+$, such that $\ker{\alpha_-}=\xi_-$ and $\ker{\alpha_+}=\xi_+$, respecting the orientations, and $(\alpha_-,\alpha_+)$ is a Liouville pair.
\end{theorem}

\begin{proof}
Without loss of generality, suppose that for the chosen orientations, the kernel of the linear interpolation between the two contact forms passes through $\pi^{-1}(E^s)$ (the other case is similar). Take $\alpha_u$ with $\alpha_u(\pi^{-1}(E^s))=0$ and $|\alpha_u(e_u)|=1$ for unit vector $e_u$ with $r_u>0$ (assuming the norm involved is differentiable along the flow). There exists $\alpha_s$ and $\tilde{\alpha}_s$ such that $\ker{\alpha^{pre}_+}:=\ker{(\alpha_u-\alpha_s})=\xi_+$ and $\ker{\alpha^{pre}_-}:=\ker({\alpha_u+\tilde{\alpha}_s})=\xi_-$ for $C^0$ 1-forms $\alpha^{pre}_+$ and $\alpha^{pre}_-$ (which are differentiable along the flow) and similar to the proof of Theorem~\ref{main}, the contactness of $\xi_+$ and $\xi_-$ implies that the stable expansion rates corresponding to $\alpha_s$ and $\tilde{\alpha}_s$ satisfy $r_u>r_s$ and $r_u>\tilde{r}_s$, respectively. 

As in Theorem~\ref{main} (see Lemma~\ref{cod}), for any $\epsilon>0$, we can find $C^1$ 1-forms $\alpha_+$ and $\alpha_-$, such that $\ker{\alpha_+}=\xi_+$ and $\ker{\alpha_-}=\xi_-$, $||\alpha_+-\alpha_+^{pre}||<\epsilon$, $||\alpha_--\alpha_-^{pre}||<\epsilon$ and $||\mathcal{L}_X\alpha_+-\mathcal{L}_X\alpha_+^{pre}||<\epsilon$, $||\mathcal{L}_X\alpha_--\mathcal{L}_X\alpha_-^{pre}||<\epsilon$. We claim that sufficiently small $\epsilon$ yields the desired Liouville pair $(\alpha_-,\alpha_+)$. Let $\alpha=(1-t)\alpha_-+(1+t)\alpha_+$ and compute
$$d\alpha\wedge d\alpha=2dt\wedge(\alpha_+-\alpha_-)\wedge [(1-t)d\alpha_-+(1+t)d\alpha_+]$$
$$\approx 2dt\wedge (-\alpha_s-\tilde{\alpha}_s)\wedge [(1-t)\hat{X}\wedge(r_u\alpha_u+\tilde{r}_s\tilde{\alpha}_s)+(1+t)\hat{X}\wedge(r_u\alpha_u-r_s\alpha_s)]$$
$$4r_u(1+\tilde{\alpha}_s(e_s))dt\wedge \hat{e}_s \wedge \hat{e}_u \wedge \hat{X},$$
where $e_s\in E^{ss}$ and $\alpha_s(e_s)=1$.

Since $r_u(1+\tilde{\alpha}_s(e_s))>0$, for sufficiently small $\epsilon$, the resulting $(\alpha_-,\alpha_+)$ has the desired properties.

\end{proof}



\section{Uniqueness Of The Underlying (Bi-)Contact Structures}\label{5}

In this section, we want to establish various uniqueness theorems, about the (bi)-contact structures underlying a given (projectively) Anosov flow. Let $X$ be the $C^1$ vector field generating such flow and  $\xi$ be any oriented plane field such that $X\subset \xi$. In particular, we want to establish the uniqueness, up to bi-contact homotopy (see Definition~\ref{bilocal}), of the supporting bi-contact structure, as well as explore the conditions under which, we can retrieve the information of such bi-contact structure, from only one of the contact structures. First, we need a definition.

\begin{definition}
We call a vector $v\in T_pM$ {\em dynamically positive (negative)}, if the plane $\langle v \rangle \oplus \langle X \rangle$ can be extended to a positive (negative) contact structure $\xi$, such that for some $\xi_-$ ($\xi_+$), there exists a supporting bi-contact structure $(\xi_-,\xi)$ ($(\xi,\xi_+)$) for $X$. We call a vector field $v$ {\em dynamically positive (negative)} on the set $U \subset M$, if it is dynamically positive (negative) at every $p \in U$. Finally, we call a plane field $\xi$ {\em dynamically positive (negative)} on the set $U \subset M$, if $\xi=\langle v \rangle \oplus \langle X \rangle$ for some dynamically positive (negative) vector field $v$ on $U$.
\end{definition}

This is basically a bi-contact geometric way of saying that a vector (or vector field or a plane field) is dynamically positive (or negative) at a point, if it lies in the interior of the first or third quadrant (the second or forth quadrant) of Figure~1~(b). In the above definition, we avoid referring to the invariant bundles and give a purely bi-contact geometric definition, in which extensions of the vector is needed. However, the defined conditions are in fact point wise, as one can codify the long term behavior in terms of the invariant bundles and being dynamically negative or positive can then be interpreted as being in the corresponding quadrants with respect to the splitting into invariant bundles. 

Note that if $\xi_+$ is a positive contact structure coming from a supporting bi-contact structure $(\xi_-,\xi_+)$, by Remark~\ref{geomca}, $\xi_+$ is dynamically positive everywhere. But this is not true in general. That is, a general supporting positive contact structure can be dynamically negative on a subset of the manifold. However, as we will shortly discuss, the behavior of the contact structure can be easily understood in such regions.

In particular, note that if $(\xi_-,\xi_+)$ is a supporting bi-contact structure for $X$, then $\xi_+$ ($\xi_-$) is dynamically positive (negative) on $M$.

Next, we see that when a supporting positive (negative) contact structure is dynamically positive (negative) everywhere on $M$, it is in fact isotopic, through supporting positive (negative) contact structures, to a positive (negative) contact structure, coming from any given supporting bi-contact structure. In particular, such contact structure is part of a supporting bi-contact structure.

 \begin{lemma}\label{linlem}
 Let $(\xi_-,\xi_+)$ be a supporting bi-contact structure for the projectively Anosov flow, generated by $X$, and $\xi$ any supporting positive contact structure, which is dynamically positive everywhere. Then, $\xi$ is isotopic to $\xi_+$, through supporting positive contact structures, which are dynamically positive everywhere. 
 \end{lemma}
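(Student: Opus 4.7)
The plan is to construct the isotopy by linearly interpolating in the angular parameterization of the dynamically positive cone at each point, exploiting the concavity of $\sin$ on $[0,\pi]$ to verify that the interpolation remains a path of positive contact structures.

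Setup and the isotopy. Fix a $C^\infty$ transverse plane field $\eta$ with a smooth Riemannian metric, and unit sections $e_s, e_u \in \eta$ of $\pi^{-1}(E^s)\cap\eta$ and $\pi^{-1}(E^u)\cap\eta$, chosen so that $(e_s,e_u,X)$ is an oriented frame as in Figure~\ref{fig1}; let $r_s,r_u$ be the associated expansion rates, so $r_u-r_s>0$ by Proposition~\ref{rateca}. Every line in the open dynamically positive cone of $\eta_p$ is uniquely $\langle \cos\theta\cdot e_s+\sin\theta\cdot e_u\rangle$ for a unique $\theta\in(0,\pi/2)$, giving angle functions $\theta_{\xi_+},\theta_\xi:M\to(0,\pi/2)$ for $\xi_+$ and $\xi$. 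Although $e_s,e_u$ may only be $C^0$ (they are $X$-differentiable, being sections of flow-invariant line fields), the difference $\phi:=\theta_\xi-\theta_{\xi_+}$ is $C^1$, since it can be computed directly from the $C^1$ plane fields $\xi_+,\xi$ as the signed angle between them within $\eta$. For $t\in[0,1]$, define $\xi_t$ as the rotation of $\xi_+$ within $\eta$ by the angle $t\phi$; equivalently, $\xi_t$ corresponds to $\theta_t:=\theta_{\xi_+}+t\phi=(1-t)\theta_{\xi_+}+t\theta_\xi$. Then $\xi_t$ is a $C^1$ supporting plane field, dynamically positive everywhere since $\theta_t(p)\in(0,\pi/2)$ as a convex combination.

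Contact condition via concavity of sine. Mimicking the derivations of Section~\ref{3} (in particular Proposition~\ref{cageoformula2}), one shows that for any $C^1$ 1-form $\alpha_t$ with $\ker\alpha_t=\xi_t$ and $\alpha_t(X)=0$, the quantity $(\alpha_t\wedge d\alpha_t)(e_s,e_u,X)$ has the same sign as
\begin{equation*}
-X\theta_t + (r_u-r_s)\sin\theta_t\cos\theta_t,
\end{equation*}
the identity holding pointwise as $C^0$ functions (the LHS is well-defined since $\alpha_t$ is $C^1$, and on the RHS $X\theta_t$ exists by the $X$-differentiability of $\theta_{\xi_+}$ and the $C^1$ regularity of $\phi$). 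The positive contact hypothesis on $\xi_+,\xi$ reads $X\theta_{\xi_+}<(r_u-r_s)\sin\theta_{\xi_+}\cos\theta_{\xi_+}$ and $X\theta_\xi<(r_u-r_s)\sin\theta_\xi\cos\theta_\xi$, and since $X\theta_t=(1-t)X\theta_{\xi_+}+tX\theta_\xi$, we conclude
\begin{equation*}
X\theta_t < \tfrac{r_u-r_s}{2}\bigl[(1-t)\sin(2\theta_{\xi_+})+t\sin(2\theta_\xi)\bigr] \leq \tfrac{r_u-r_s}{2}\sin(2\theta_t) = (r_u-r_s)\sin\theta_t\cos\theta_t,
\end{equation*}
where the middle inequality is the concavity of $\sin$ on $[0,\pi]$ applied to $2\theta_{\xi_+},2\theta_\xi\in(0,\pi)$. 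Hence $\xi_t$ is a positive contact structure, and the family $\{\xi_t\}_{t\in[0,1]}$ is the desired isotopy.

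Main obstacle. The technical subtlety is arranging the contact-condition formula at the appropriate regularity: $e_s,e_u$ are typically only $C^0$, so the absolute angles $\theta_{\xi_+},\theta_\xi$ are likewise only $C^0$, yet the plane field $\xi_t$ itself must be $C^1$ for contactness to make sense. The resolution is to define $\xi_t$ as the rotation of the $C^1$ plane field $\xi_+$ by the $C^1$ angle $t\phi$, and to observe that the Lie-derivative identities in Proposition~\ref{cageoformula2} only require $X$-differentiability of $e_s,e_u$, which is guaranteed by the flow-invariance of $E^s$ and $E^u$. Once the formula is in place, the concavity of $\sin$ on $[0,\pi]$ closes the argument without further work.
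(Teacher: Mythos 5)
The proposal is correct but takes a genuinely different route from the paper's proof. Both arguments reduce the problem to a path of plane fields through the dynamically positive cone and express the contact condition in terms of the expansion rates $r_s, r_u$. But the paths differ: the paper interpolates the contact forms $\alpha_t = (1-t)\alpha_+ + t\alpha_+'$ with $\alpha_+ = \tfrac{1}{2}(\alpha_u - \alpha_s)$ and $\alpha_+' = \tfrac{1}{2}(f\alpha_u - \alpha_s)$, which angularly is linear interpolation of $\cot\theta$; you instead linearly interpolate the angle itself, $\theta_t = (1-t)\theta_{\xi_+} + t\theta_\xi$. To verify contactness, the paper expands $\alpha_t\wedge d\alpha_t$ into the four cross-terms $\alpha_i\wedge d\alpha_j$ and checks each coefficient separately, while you derive the pointwise formula $(\alpha_t\wedge d\alpha_t)(e_s,e_u,X) = g^2\bigl[-X\theta_t + (r_u-r_s)\sin\theta_t\cos\theta_t\bigr]$ and close the inequality in one stroke via concavity of $\sin$ on $[0,\pi]$. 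Your formula is correct (and your observation that the identity only needs $X$-differentiability of $e_s,e_u$, plus $C^1$ regularity of $\phi=\theta_\xi-\theta_{\xi_+}$ and of $\alpha_t$, is a real simplification over the paper's ``assume $C^1$ weak bundles, then approximate''). The concavity argument is a cleaner, more symmetric way to finish that sidesteps the term-by-term bookkeeping.

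Two points to tighten. First, the inference ``$r_u-r_s>0$ by Proposition~\ref{rateca}'' does not hold for an arbitrary smooth metric; that proposition requires the metric to satisfy the $A=1$ adapted condition of Definition~\ref{ca} and be $X$-differentiable. With an unadapted metric $r_u-r_s$ can be negative on a subset of $M$, and there the concavity inequality points the wrong way, breaking the chain $X\theta_t<\tfrac{r_u-r_s}{2}[(1-t)\sin 2\theta_{\xi_+}+t\sin 2\theta_\xi]\le\tfrac{r_u-r_s}{2}\sin 2\theta_t$. The cleanest fix is the paper's: choose the metric on $\eta$ so that $\theta_{\xi_+}\equiv\pi/4$ (equivalently, scale $\alpha_s,\alpha_u$ so that $\alpha_+=\tfrac{1}{2}(\alpha_u-\alpha_s)$); then your formula applied at $t=0$ reads $0<(r_u-r_s)\cdot\tfrac12$, so $r_u-r_s>0$ is exactly the contact condition for $\xi_+$, no separate citation needed. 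Second, the family $\{\xi_t\}$ as constructed is a homotopy through contact structures; to get the isotopy asserted in the lemma you should invoke Gray's theorem, as the paper does.
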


\begin{proof}
It suffices to show that linear interpolation of $\xi$ and $\xi_+$ is through positive contact structures and Gray's theorem guarantees the existence of isotopy. For simplicity, we assume $\pi^{-1}(E^s)$ and $\pi^{-1}(E^u)$ are $C^1$ plane fields. Otherwise, we can use the approximations used in the proof of Theorem~\ref{main} and the fact that both projective Anosovity and contactness are open conditions.

Choose $C^1$ 1-forms $\alpha_s$ and $\alpha_u$ such that $\ker{\alpha_s}=\pi^{-1}(E^u)$, $\ker{\alpha_u}=\pi^{-1}(E^s)$ and $\xi_+=\ker{\alpha_+}$, where $$\alpha_+:=\frac{\alpha_u-\alpha_s}{2}.$$

Then, there exists $C^1$ function $f$, such that $$\xi=\ker{\frac{f\alpha_u-\alpha_s}{2}}.$$

Letting $\alpha_+':=\frac{f\alpha_u-\alpha_S}{2}$, we show that for all $t\in [0,1]$,
$$\alpha_t:=(1-t)\alpha_++t\alpha_+'$$
is a positive contact structure.

In the following, we show that $\alpha_t$ s are positive contact forms for all $t\in [0,1]$ and appealing to Gray's theorem provides the desired isotopy, which can also be explicitly computed to be of the form $x\mapsto \phi^{\tau(x)}(x)$, where $\tau{(x)}$ depends on the point $x$ and the expansion rates $r_u$ and $r_s$.

Choose some transverse plane field $\eta$ and assume $e_s\in \pi^{-1}(E^s)\cap\eta$ and $e_u\in \pi^{-1}(E^u)\cap\eta$ are the vector fields defined by $\alpha_s(e_s)=\alpha_u(e_u)=1$, and $r_s$ and $r_u$ are the corresponding expansion rates of stable and unstable directions, respectively, i.e.
$$-\mathcal{L}_Xe_s=r_se_s -q_s^\eta \ X\ \text{ and }-\mathcal{L}_Xe_u=r_ue_u -q_s^\eta \ X,$$
for some real functions $q_s^\eta,q_u^\eta$ (see Proposition~\ref{cageoformula2}).

We can easily compute (as in proof of Theorem~\ref{main} and using Proposition~\ref{cageoformula2}):
$$4(\alpha_0\wedge d\alpha_0)(e_s,e_u,X)=\big( \alpha_u\wedge d\alpha_u -\alpha_u \wedge d\alpha_s-\alpha_s\wedge d\alpha_u +\alpha_s\wedge d\alpha_s\big)(e_s,e_u,X)$$
$$=-\alpha_u(e_u)\alpha_s([e_s,X])+\alpha_s(e_s)\alpha_u([e_u,X])=r_u-r_s>0;$$
$$4(\alpha_1\wedge d\alpha_1)(e_s,e_u,X)=\big(f \alpha_u\wedge d(f\alpha_u) -f\alpha_u \wedge d\alpha_s-\alpha_s\wedge d(f\alpha_u) +\alpha_s\wedge d\alpha_s\big)(e_s,e_u,X)$$
$$=-f\alpha_u(e_u)\alpha_s([e_s,X])+\alpha_s(e_s)\left[X\cdot (f\alpha_u(e_u))+f\alpha_u([e_u,X])\right]=fr_u-fr_s+X\cdot f>0;$$
$$4(\alpha_0\wedge d\alpha_1)(e_s,e_u,X)=\big( \alpha_u\wedge d(f\alpha_u) -\alpha_u \wedge d\alpha_s-\alpha_s\wedge d(f\alpha_u) +\alpha_s\wedge d\alpha_s\big)(e_s,e_u,X)$$
$$=-\alpha_u(e_u)\alpha_s([e_s,X])+\alpha_s(e_s)\left[X\cdot (f\alpha_u(e_u))+f\alpha_u([e_u,X])\right]=fr_u-r_s+X\cdot f;$$
$$4(\alpha_1\wedge d\alpha_0)(e_s,e_u,X)=\big( f\alpha_u\wedge d\alpha_u -f\alpha_u \wedge d\alpha_s-\alpha_s\wedge d\alpha_u +\alpha_s\wedge d\alpha_s\big)(e_s,e_u,X)$$
$$=-f\alpha_u(e_u)\alpha_s([e_s,X])+\alpha_s(e_s)\alpha_u([e_u,X])=r_u-fr_s.$$

It yields
$$(\alpha_t\wedge d\alpha_t)(e_s,e_u,X)=t^2(fr_u-fr_s+X\cdot f)+(1-t)^2(r_u-r_s)+t(1-t)(r_u-r_s+fr_u-fr_s+X\cdot f)>0,$$
completing the proof.
\end{proof}

This, in particular, implies that the supporting bi-contact structure for any projectively Anosov flow is unique, up to homotopy through supporting bi-contact structures.

\begin{theorem}\label{uniquebi}
If $(\xi_-,\xi_+)$ and $(\xi_-',\xi_+')$ are two supporting bi-contact structures for a projectively Anosov flow, generated by $X$, then they are homotopic through supporting bi-contact structures.
\end{theorem}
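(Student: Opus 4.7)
The plan is to produce the homotopy by interpolating the positive and negative components independently, appealing to Lemma~\ref{linlem} and its symmetric negative counterpart, and then observing that transversality comes for free from the dynamical positivity/negativity of the interpolants.

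First, I would note that any positive contact structure arising from a supporting bi-contact structure for $X$ is, by definition, dynamically positive everywhere; similarly for the negative component. Hence both $\xi_+$ and $\xi_+'$ are dynamically positive everywhere, and both $\xi_-$ and $\xi_-'$ are dynamically negative everywhere. This is precisely the hypothesis required to invoke Lemma~\ref{linlem}.

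Applying Lemma~\ref{linlem} then produces a path $\{\xi_+^t\}_{t\in[0,1]}$ of supporting positive contact structures, each dynamically positive everywhere, joining $\xi_+'$ to $\xi_+$; concretely, one may take the linear interpolation of suitably chosen defining contact forms. The mirror argument --- obtained either by repeating the proof of Lemma~\ref{linlem} with the roles of the positive and negative contact forms interchanged, or by applying Lemma~\ref{linlem} to the projectively Anosov vector field $-X$ (after swapping the stable and unstable directions) --- yields an analogous path $\{\xi_-^t\}_{t\in[0,1]}$ of supporting negative contact structures, each dynamically negative everywhere, joining $\xi_-'$ to $\xi_-$. The only structural input underlying both interpolations is Proposition~\ref{rateca}, i.e.\ $r_u - r_s > 0$, which is manifestly symmetric with respect to the positive/negative distinction.

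The crucial final observation is that, throughout these paths, $\xi_+^t$ and $\xi_-^t$ are automatically transverse. Both contain $\langle X\rangle$, so their projections $\pi(\xi_+^t), \pi(\xi_-^t)\subset TM/\langle X\rangle$ are line fields. Dynamical positivity forces $\pi(\xi_+^t)$ to lie in the interior of the ``positive'' region of Figure~1~(b), while dynamical negativity forces $\pi(\xi_-^t)$ into the interior of the ``negative'' region; these two regions are disjoint open subsets of the projectivization of $TM/\langle X \rangle$. Therefore $\pi(\xi_+^t)\neq\pi(\xi_-^t)$ at every point, so $\xi_+^t\pitchfork\xi_-^t$, and $(\xi_-^t,\xi_+^t)$ is a supporting bi-contact structure for every $t\in[0,1]$, providing the desired homotopy. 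I do not anticipate any serious obstacle beyond verifying the symmetric analog of Lemma~\ref{linlem}, which should be routine since the proof of that lemma isolates the relevant asymmetry into a handful of explicit sign computations that can be reversed uniformly.
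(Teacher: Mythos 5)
Your proposal is correct and follows essentially the same strategy as the paper: note that $\xi_\pm$ and $\xi_\pm'$ are all dynamically positive/negative everywhere, then apply Lemma~\ref{linlem} (together with its mirror for the negative contact structures) to interpolate each component linearly through dynamically positive/negative supporting contact structures. Your closing paragraph spelling out why $\xi_+^t$ and $\xi_-^t$ remain transverse throughout the interpolation --- because dynamically positive and dynamically negative line fields occupy disjoint open regions of the projectivization of $TM/\langle X\rangle$ --- makes explicit a point the paper leaves implicit in its citation of the proof of Lemma~\ref{linlem}.
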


\begin{proof}
By Remark~\ref{geomca}, $\xi_+'$ and $\xi_-'$ are dynamically positive and negative everywhere, respectively. The proof of Lemma~\ref{linlem} finishes the proof (See Figure~3).
\end{proof}

\begin{figure}\label{uniq}
  \center \begin{overpic}[width=4cm]{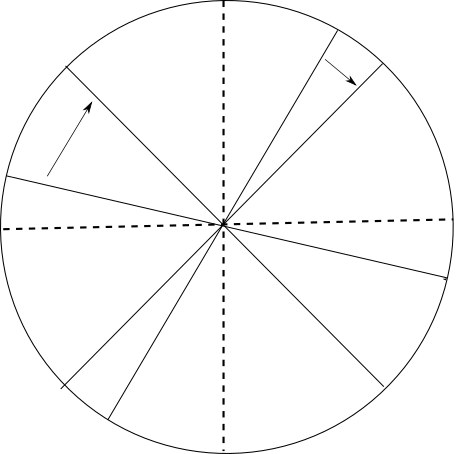}
  \put(8,100){$\xi_-$}
  \put(100,100){$\xi_+$}
    \put(-10,72){$\xi_-'$}
  \put(85. ,110){$\xi_+'$}
    \put(54,117){$E^u$}
      \put(115,54){$E^s$}

  \end{overpic}
\caption{Uniqueness of the supporting bi-contact structure}
\end{figure}

It is important to understand how a positive contact structure $\xi$ with projectively Anosov vector field $X\subset \xi$ behaves in a region, where it is dynamically negative (similarly, we can describe the behavior of a negative contact structure in a region, where it is dynamically positive). Consider a transverse plane field $\eta$, which is $C^1$ in $X$-direction, and using any Riemannian metric as described above, define the function
$$\theta_\xi :M\rightarrow [0,2\pi),$$
which measures the angle between $\xi\cap \eta$ and the bi-sector of $\eta\cap\pi^{-1}(E^s)$ and $\eta\cap\pi^{-1}(E^u)$ in the positive region. Note that this function is continuous and differentiable with respect to $X$, where $\xi$ is dynamically negative, $\xi=\pi^{-1}(E^s)$ or $\xi=\pi^{-1}(E^u)$. Remark~\ref{geomca} guarantees that at such points $$X\cdot\theta_\xi<0,$$
since at those points, the flow rotates $\xi$ clockwise in those regions (see Figure~1~(b)) and by Frobenius theorem, $\xi$ needs rotate faster in a clockwise fashion, to stay a positive contact structure.

Now consider the family of plane fields 
$$\eta_\theta:=\langle X \rangle \oplus l_\theta,$$
for $\theta \in I_-:=[\frac{\pi}{4},\frac{3\pi}{4}]\cup [\frac{5\pi}{4},\frac{7\pi}{4}]$, where $l_\theta\subset\eta$ is the oriented line field which has angle $\theta$ with the dynamically positive bi-sector of $\eta\cap\pi^{-1}(E^s)$ and $\eta\cap\pi^{-1}(E^u)$. Note that such $l_\theta$ is either dynamically negative, or the same as $E^s$ or $E^u$ (ignoring the orientation). After a generic smooth perturbation of $\xi$, we can assume the set $\Sigma_\theta:=\{x\in M \text{ s.t. } \xi=\eta_\theta \}$ is a differentiable manifold, which is transverse to $X$, since $X\cdot\theta_\xi<0$ (and using {\em the implicit function theorem}). Hence, such solution set is a union of tori, since the splitting $TM/\langle x\rangle \simeq E^s\oplus E^u$ would trivialize the tangent space of such surface. Therefore, if $N\subset M$ is the set on which $\xi$ is dynamically negative, then
$$\bar{N}=\Sigma:=\bigcup\limits_{\theta\in I_-} \Sigma_\theta \simeq \bigcup\limits_{1\leq i \leq k}T_i\times [0,1],$$
for some integer $k$, where $T_i$ s are tori and for each $1\leq i \leq k$ and $\tau \in [0,1]$, $X$ is transverse to $T_i\times \{\tau\}$.

From the above observations and what we know about Anosov flows, we can derive a host of uniqueness theorems about $\xi$.

\begin{lemma}\label{dynneg}
Using the above notations, let $X$ be an Anosov flows and $N\subset M$, the subset of $M$ on which the positive contact structure $\xi$ is dynamically negative. 

a) $\bar{N} \simeq \bigcup\limits_{1\leq i \leq k}T_i\times [0,1]$, where $T_i$ s are incompressible tori;

b) $\partial\bar{N}=\{x\in M \text{ s.t. }\xi=\pi^{-1}(E^s) \} \cup \{x\in M \text{ s.t. }\xi=\pi^{-1}(E^u) \}$;

c) If $T_1$, $T_2$, ..., $T_j$ of part (a) are parallel through transverse tori, then there exists a map $$(\mathbb{S}^1\times \mathbb{S}^1\times [0,(j-1)\pi]\text{ with coordinates }(s,t,\theta), \ker{\{ \cos{\theta}\ dt+\sin{\theta}\ ds \}} ) \rightarrow (M,\xi),$$
which is a contact embedding on $(\mathbb{S}^1\times \mathbb{S}^1\times [0,(j-1)\pi))$.

d) If we assume $X$ to be only projectively Anosov (not necessarily Anosov), we can conclude all the above, except $T_i$ might not be incompressible.

\end{lemma}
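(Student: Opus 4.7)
The plan is to build on the angular analysis of $\xi$ along $X$ developed just before the lemma, combined with a classical incompressibility theorem for surfaces transverse to Anosov flows. I would begin with part (b), since concretely identifying $\partial \bar{N}$ both anchors the product structure in (a) and sets up the parametrization in (c).

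For part (b), a point $x \in \partial \bar{N}$ is by definition a limit of points where $\xi \cap \eta$ lies strictly inside the cone $\{l_\theta : \theta \in I_-\}$, so $\xi \cap \eta_x$ must sit on the boundary of this cone, i.e.\ at one of the angles $\pi/4,\ 3\pi/4,\ 5\pi/4,\ 7\pi/4$ measured from the dynamically positive bisector. By construction these four oriented lines are precisely $\pm E^s$ and $\pm E^u$, and therefore $\xi = \langle X \rangle \oplus (\xi \cap \eta)$ coincides with $\pi^{-1}(E^s)$ or $\pi^{-1}(E^u)$ at $x$. For the product structure in part (a), the paragraph preceding the lemma already establishes $\bar{N} \simeq \bigcup_{i=1}^{k} T_i \times [0,1]$ with each slice transverse to $X$ (using $X \cdot \theta_\xi < 0$ together with the implicit function theorem, and the fact that $\Sigma_\theta$ inherits a trivialized tangent bundle from $E^s \oplus E^u$, forcing its components to be tori). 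The only remaining point under the Anosov hypothesis is incompressibility, for which I would invoke the classical theorem that closed surfaces transverse to a $3$-dimensional Anosov flow are incompressible, proved by a foliation-theoretic argument using the weak invariant foliations of an Anosov flow (a compressing disk intersected with the weak stable foliation would produce a Reeb-type component or a trapping region incompatible with the non-wandering dynamics).

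For part (c), suppose $T_1, \ldots, T_j$ are parallel through transverse tori; order them so that consecutive pairs $(T_i, T_{i+1})$ cobound slabs $R_i \cong \mathbb{T}^2 \times [0,1]$ transverse to $X$. Part (b) says each $T_i$ lies in $\partial\bar{N}$, so $\xi$ aligns with $\pi^{-1}(E^{s})$ or $\pi^{-1}(E^{u})$ along $T_i$. Along the flow of $X$, the function $\theta_\xi$ is strictly decreasing throughout $R_i$: on $R_i \cap \bar{N}$ by the inequality $X \cdot \theta_\xi < 0$ recorded just before the lemma, and on $R_i \setminus \bar{N}$ by the same sign imposed by the positive contact condition on $\xi$ in the dynamically positive sector (Remark~\ref{geomca}). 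By the parallel hypothesis, the total variation of $\theta_\xi$ across $R_i$ equals $\pi$. Fixing a diffeomorphism of each $T_i$ with $\mathbb{T}^2$ whose coordinates $(s,t)$ align with the Giroux model, and reparametrizing the flow in $R_i$ so that $\theta = \theta_\xi$ becomes a linear coordinate, the line field $\xi \cap \eta$ becomes $\sin\theta\,\partial_t - \cos\theta\,\partial_s$, identifying $\xi$ with $\ker\{\cos\theta\,dt + \sin\theta\,ds\}$; stacking the $j-1$ slabs yields the desired contact embedding of the $(j-1)\pi$-torsion model, injective on the interior.

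Part (d) is then immediate: only the incompressibility statement in (a) used full Anosovity, whereas the product structure in (a), all of (b), and all of (c) depend only on the projectively Anosov splitting $TM/\langle X\rangle = E^s \oplus E^u$ and the sign of $X \cdot \theta_\xi$. The main technical obstacle I expect is the coordinate bookkeeping in (c): arranging the reparametrization so that $\theta_\xi$ becomes the linear coordinate $\theta$ smoothly across the critical tori $T_i$ (where one must check that strict monotonicity survives the transition between $\bar{N}$ and its complement), and choosing the trivializations of the successive $T_i$'s consistently so that the local Giroux models glue into a single global embedding. The generic small perturbation of $\xi$ invoked in the paragraph preceding the lemma should supply enough transversality to execute this construction.
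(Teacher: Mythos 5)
Your treatment of parts (a), (b), and (d) is sound and matches the paper: (b) from the boundary analysis of the cone $I_-$, (a) from the preceding discussion (implicit function theorem on $\Sigma_\theta$, triviality of the tangent bundle from the splitting, incompressibility of transverse tori for Anosov flows), and (d) by noting that only the incompressibility step used Anosovity.

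However, there is a genuine gap in your argument for part (c). You assert that $\theta_\xi$ is strictly decreasing on all of $R_i$, citing Remark~\ref{geomca} for the dynamically positive sector $R_i \setminus \bar{N}$. This is exactly the step the paper is careful to \emph{not} make: the text preceding the lemma only records $X \cdot \theta_\xi < 0$ \emph{where $\xi$ is dynamically negative or coincides with $\pi^{-1}(E^s)$ or $\pi^{-1}(E^u)$}. The reason is that $\theta_\xi$ measures the angle of $\xi$ against the moving reference frame $(E^s, E^u)$, and the derivative $X \cdot \theta_\xi$ decomposes into the contact rotation (always clockwise, by Remark~\ref{geomca}) \emph{plus} the rotation induced on directions by the hyperbolic splitting (which attracts lines toward $E^u$). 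In the dynamically negative cone these two contributions have the same sign, giving $X \cdot \theta_\xi < 0$; in the dynamically positive cone the attraction toward $E^u$ opposes the contact rotation, so $X \cdot \theta_\xi$ can be positive. Indeed, if $\xi$ were, say, a supporting contact structure $\xi_+$ (dynamically positive everywhere) on a transitive Anosov flow, $\theta_\xi$ would be a bounded function on a compact manifold whose derivative along a dense orbit could not remain one-signed, so monotonicity must fail. Your plan then reparametrizes so that ``$\theta = \theta_\xi$ becomes a linear coordinate,'' which presupposes precisely the monotonicity that fails. The paper's proof avoids this by using $\theta_\xi$ only in $\bar{N}$ and a reparametrization by flowlines in the dynamically positive region between adjacent slabs, where the twisting of $\xi$ is tracked relative to a flow-adapted frame rather than the $(E^s,E^u)$ frame; the total twist across such a region is still forced to be $\pi/2$ by the boundary conditions $\xi = \pi^{-1}(E^s)$, $\xi = \pi^{-1}(E^u)$ and the constraint that $\xi$ stays in the open positive cone, even though $\theta_\xi$ itself may oscillate.
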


\begin{proof}
Part (a) and (b) follow from the above discussion and the fact that any surface which is transverse to an Anosov flow is an incompressible torus \cite{brunella,fenley2,mosher}. For Part (c), notice that if we consider two immediate tori, we have a half-twist of the flow (a Giroux $\pi$-torsion) in between (see Remark~\ref{torsion}). More precisely, we can reparametrize the angle $\theta$ of the above discussion, by the flowlines, when in the region between any two adjacent tori, where the flow is dynamically positive (and where the flow is dynamically negative, we automatically have $X\cdot\theta_\xi<0$). Therefore, we get a contact embedding of $$\left( [0,\pi]\times \mathbb{S}^1\times\mathbb{S}^1 \text{ with coordinates }(t,\phi_1,\phi_2),\ker{\{\cos{t}\ d\phi_1+sin{t}\ d\phi_2} \right\})\rightarrow(M,\xi).$$
\end{proof}

\begin{theorem}
If $M$ is atoroidal and $(\xi_-,\xi_+)$ a supporting bi-contact structure for the Anosov vector field $X$ on $M$, then for any supporting positive contact structure $\xi$, $\xi$ is isotopic to $\xi_+$, through supporting contact structures.
\end{theorem}

\begin{proof}
By Lemma~\ref{dynneg} $\xi$ is dynamically positive everywhere, and Lemma~\ref{linlem} finishes the proof.
\end{proof}

An Anosov flow is called {\em $\mathbb{R}$-covered}, if both lifts of its stable and unstable foliations to the universal cover are equivalent to the the trivial foliation of $\mathbb{R}^3$ by planes . This is an important class of Anosov flows and is studied in depth, in the works of Fenley, Bartbot, as well as many others. In particular, it is known \cite{barbot,fenley1} that when an {\em $\mathbb{R}$-covered} Anosov flow is {\em skewed}, i.e. it is not orbit equivalent to a suspension Anosov flow, then, it does not admit any transverse embedded surface (see Theorem~3.23 of \cite{topanosov}). Hence,

\begin{theorem}
Let $X$ be a skewed $\mathbb{R}$-covered Anosov vector field, supported by the bi-contact structure $(\xi_-,\xi_+)$ on $M$, and let $\xi$ be any supporting positive contact structure. Then, $\xi$ is isotopic to $\xi_+$, through supporting contact structures.
\end{theorem}

In the case of $M$ being a torus bundle, the underlying contact structures can be characterized by having the minimum torsion (see Remark~\ref{torsion}). Although, similar phenomena can be observed in the case of projective Anosov flows, we state the theorem for Anosov flows, for which the relation of torsion and symplectic fillability is established in \cite{gay,mnw}. The proof relies on the classification of contact structures on torus bundles and $\mathbb{T}^2\times I$ by Ko Honda and one should consult \cite{honda1} and \cite{honda2} for more details and precise definitions.

\begin{theorem}\label{john}
Let $X$ be the suspension of an Anosov diffeomorphism of torus, supported by the bi-contact structure $(\xi_-,\xi_+)$, and $\xi$ a positive supporting contact structure. Then, $\xi$ is isotopic through supporting bi-contact structures to $\xi_+$, if and only if, $\xi$ is strongly symplectically fillable.
\end{theorem}

\begin{proof}


If $\xi$ is dynamically positive everywhere, Lemma~\ref{linlem} yields the isotopy. Otherwise, for any incompressible torus $T_i$ in Lemma~\ref{dynneg}, the flow is a suspension flow for an appropriate Anosov diffeomorphism of $T_i$. Notice that there is at least two of such $T_i$, since $\xi$ is coorientable. The idea is that, in this case, $\xi$ rotates at least $2\pi$ more than $\xi_+$, as we move in $\mathbb{S}^1$-direction (see Lemma~\ref{dynneg}) and since $\xi_+$ rotates some itself, that means that $\xi$ rotates more than $2\pi$. Therefore, $\xi$ contains Giroux torsion and is not symplectically fillable.

Let $\tilde{M}:=\overline{M\backslash T_1}\simeq \mathbb{T}^2\times I$, where we have compactified $M\backslash T_1$, by gluing two copies of $T_1$ along the boundary. i.e. $T^1_1$ and $T_1^2$, such that $\partial \tilde{M}=-T^1_1 \sqcup T_1^2$ (abusing notation, we call the induced contact structures, $\xi_+$ and $\xi$). After a choice of basis for $\mathbb{T}^2$, let $s_+^i$ ($s^i$), $i=1,2$, be the slope of the {\em characteristic foliation} of $\xi_+$ ($\xi$) on $T_1^i$, respectively. That is the foliation of $T_1^i$ by $TT_1^i\cap \xi_+$ ($TT_1^i\cap \xi$).

Note that since $\xi_+$ is universally tight, by \cite{honda2}, $\xi_+$ has {\em nonnegative twisting} as it goes from $T_1^1$ to $T_1^2$. Furthermore, since $\xi_+$ does not contain Giroux torsion, such twisting is less than $2\pi$. We claim that $s_+^1\neq s_+^2$ and $s^1\neq s^2$. That is because an Anosov diffeomorphism of torus preserves exactly two slopes of the torus and those are the intersections of $\pi^{-1}(E^s)$ and $\pi^{-1}(E^u)$ with the boundary.

Now by Lemma~\ref{dynneg}~c), there exist at least a $\pi$-twisting between $T_1^1$ and $T_2$, as well as between $T_2$ and $T_1^2$. That is a total of at least $2\pi$-twisting. i.e. a contact embedding

$$\left( [0,2\pi]\times \mathbb{S}^1\times\mathbb{S}^1 \text{ with coordinates }(t,\phi_1,\phi_2),\ker{\{ \cos{t}\ d\phi_1+sin{t}\ d\phi_2\}} \right)\rightarrow(\tilde{M},\xi),$$
with $\{0\} \times \mathbb{S}^1\times\mathbb{S}^1\rightarrow T_1^1$. Since $Im(\{1\} \times \mathbb{S}^1\times\mathbb{S}^1)$ has the same slope $s_1$ as $T_1^1$ (after a $2\pi$-twist), this implies $Im(\{1\} \times \mathbb{S}^1\times\mathbb{S}^1) \cap T^2_1=\emptyset$ and therefore, we will achieve an embedding
$$\left( [0,2\pi]\times \mathbb{S}^1\times\mathbb{S}^1 \text{ with coordinates }(t,\phi_1,\phi_2),\ker{\{\cos{t}\ d\phi_1+sin{t}\ d\phi_2}\} \right)\rightarrow(M,\xi),$$
meaning that $\xi$ contains Giroux torsion.
\end{proof}


\section{A Characterization of Anosovity Based on Reeb Flows and\\ Consequences}\label{6}

In this section, we use ideas developed in Section~\ref{3} and the proof of Theorem~\ref{main} to give a characterization of Anosovity, based on the {\em Reeb flows}, associated to the underlying contact structures of a projectively Anosov, as well as discuss its contact topological implications in the Anosov case. First, recall:

\begin{definition}
Given a contact manifold $(M,\xi)$, for any choice of contact form $\alpha$ for $\xi$, there exists a unique vector field $R_\alpha$, named the {\em Reeb vector field} with the following properties:
$$1)\text{ } \alpha(R_\alpha)=1,$$
$$2) \text{ }d\alpha(R_\alpha,.)=0.$$
\end{definition}

\begin{example}
1) For the standard tight contact structure on $\mathbb{S}^3$, described in Example~\ref{excontact}~2), the Reeb vector field is tangent to Hopf fibration, for an appropriate choice of contact form.

2) For all the contact structures of Example~\ref{excontact}~3) on $\mathbb{T}^3$, the unit orthogonal vector field (considering the flat metric on $\mathbb{T}\simeq\mathbb{R}^3$), is the Reeb vector field.

3) The geodesic flow on the unit tangent space of a hyperbolic surface is the Reeb vector field for the tautological 1-form (which is a contact form). In this case the Reeb vector field itself is Anosov.
\end{example}

\begin{theorem}\label{reebchar}
Let $X$ be a projectively Anosov vector field on $M$. Then, the followings are equivalent:

(1) $X$ is Anosov;

(2) There exists a supporting bi-contact structure $(\xi_-,\xi_+)$, such that $\xi_+$ admits a Reeb vector field, which is dynamically negative everywhere;

(3) There exists a supporting bi-contact structure $(\xi_-,\xi_+)$, such that $\xi_-$ admits a Reeb vector field, which is dynamically positive everywhere.
\end{theorem}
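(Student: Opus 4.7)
The plan is to prove $(1)\Leftrightarrow(2)$ directly, and to deduce $(1)\Leftrightarrow(3)$ by applying the same argument to $-X$, which is Anosov whenever $X$ is and under which the roles of $\xi_+$ and $\xi_-$ (and of dynamical positivity and negativity) are interchanged. Throughout I fix a transverse plane field $\eta$ with unit line-field generators $e_u\in\eta\cap\pi^{-1}(E^u)$ and $e_s\in\eta\cap\pi^{-1}(E^s)$, oriented so that $(e_s,e_u,X)$ orients $M$, and I decompose any vector field transverse to $\langle X\rangle$ as $aX+be_u+ce_s$; dynamical negativity then becomes $bc<0$. For any contact form $\alpha_+$ for the positive contact structure $\xi_+$ of a supporting bi-contact pair, $\alpha_+(X)=0$ and the quantities $p:=\alpha_+(e_u)$, $q:=\alpha_+(e_s)$ have opposite signs by Remark~\ref{geomca}; I normalize $\alpha_+$ so that $p>0>q$.

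For $(1)\Rightarrow(2)$, I will revisit the contact form $\alpha_+^T=\tfrac{1}{2}(\alpha_u^T-\alpha_s^T)$ from the proof of Theorem~\ref{main}. Since an Anosov flow is balanced, I take $\eta=E^{ss}\oplus E^{uu}$ and use Proposition~\ref{balancedform}. Claims~\ref{claim1}--\ref{claim3} and Lemma~\ref{l1} then produce the uniform limits $\alpha_+^T(e_u)\to\tfrac{1}{2}$, $\alpha_+^T(e_s)\to-\tfrac{1}{2}$, $d\alpha_+^T(X,e_u)\to\tfrac{r_u}{2}$, and $d\alpha_+^T(X,e_s)\to-\tfrac{r_s}{2}$ as $T\to\infty$. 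Writing the Reeb vector field of $\alpha_+^T$ as $R_T=a_TX+b_Te_u+c_Te_s$, the conditions $\alpha_+^T(R_T)=1$ and $d\alpha_+^T(R_T,X)=0$ read in the limit $b_T-c_T=2$ and $b_Tr_u=c_Tr_s$, which solve to $b_T\to 2r_s/(r_s-r_u)$ and $c_T\to 2r_u/(r_s-r_u)$. By Proposition~\ref{rateanosov} and compactness of $M$, $r_u>0>r_s$ uniformly, so these limits are bounded away from zero with $b_T>0>c_T$; for $T$ sufficiently large, $R_T$ is dynamically negative on all of $M$.

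For $(2)\Rightarrow(1)$, set $A:=d\alpha_+(X,e_u)$ and $B:=d\alpha_+(X,e_s)$. Solving $\alpha_+(R)=1$ together with $d\alpha_+(R,X)=0$ yields $(b,c)=k(B,-A)$ with $k=(pB-qA)^{-1}>0$ (the positive contact condition), so dynamical negativity $bc<0$ is equivalent to $AB>0$ pointwise. Because $\alpha_+(X)=0$, Proposition~\ref{cageoformula2} (whose $X$-corrections are annihilated by $\alpha_+$) gives $A=X\cdot p+r_u p$ and $B=X\cdot q+r_s q$. The case $A,B<0$ would give $r_u<-X\cdot\log p$ and $r_s>-X\cdot\log|q|$, hence $0<r_u-r_s<X\cdot\log(|q|/p)$ pointwise; since $\log(|q|/p)$ attains a maximum on the compact manifold $M$, $X\cdot\log(|q|/p)>0$ everywhere is impossible. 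Therefore $A,B>0$, i.e., $r_u>-X\cdot\log p$ and $r_s<-X\cdot\log|q|$ at every point. Rescaling the Riemannian metric on $E^u$ by the conformal factor $p^2$ and on $E^s$ by $q^2$ shifts the expansion rates to $r_u+X\cdot\log p$ and $r_s+X\cdot\log|q|$ (a direct computation from Proposition~\ref{cageoformula}), which are now strictly positive and strictly negative everywhere. Proposition~\ref{rateanosov} then yields the Anosovity of $X$.

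The main obstacle will be direction $(2)\Rightarrow(1)$: one must upgrade a pointwise angular condition on the Reeb into the uniform hyperbolicity estimates $r_u>0>r_s$ for some metric. The bridge is the identity $A=X\cdot p+r_u p$ (and its $s$-analogue), which converts the freedom of rescaling $\alpha_+$ into the freedom of rescaling the metric, combined with the compactness/maximum-value argument that rules out the wrong-sign case $A,B<0$; without the latter the metric rescaling would not deliver the pointwise signs demanded by Proposition~\ref{rateanosov}.
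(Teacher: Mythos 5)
Your $(1)\Leftrightarrow(2)$ is essentially correct and follows the same computation as the paper, though the paper's route is leaner: rather than solving the two Reeb equations and passing to a limit, the paper (in the $C^1$ case) simply exhibits the dynamically negative vector field $v=-r_s e_u - r_u e_s$ and checks $d\alpha_+(X,v)=0$, which pins down $\langle R_{\alpha_+}\rangle\oplus\langle X\rangle=\langle X,v\rangle$ with no limit needed; also your claimed limits $\alpha_+^T(e_u)\to\frac12$, etc., are only approximate since the approximated forms $\alpha^0_u,\alpha^0_s$ do not evaluate to exactly $1$ on $e_u,e_s$. For $(2)\Rightarrow(1)$ you and the paper differ genuinely: the paper \emph{builds} the metric out of $\alpha_+$ (so that $p\equiv\tfrac12$, $q\equiv-\tfrac12$), which turns the Reeb direction directly into $-r_s e_u-r_u e_s$ and makes $r_sr_u<0$ immediate, while you keep an arbitrary metric, extract $A=X\cdot p+r_up$, $B=X\cdot q+r_sq$, and rescale the metric afterwards; this is a valid alternative, but note that your compactness/maximum argument to exclude $A,B<0$ is unnecessary — with $(b,c)=k(B,-A)$, $k>0$, the case $A,B<0$ gives $b<0<c$, hence $bp+cq<0$ (as $p>0>q$), contradicting $\alpha_+(R)=1$ outright.

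There is, however, a genuine gap in your reduction of $(1)\Leftrightarrow(3)$ to $(1)\Leftrightarrow(2)$ via $X\mapsto -X$. Time reversal swaps $E^s$ and $E^u$, but it does \emph{not} swap $\xi_+$ and $\xi_-$: whether a plane field is a positive or negative contact structure is a property of the plane field and the orientation of $M$, independent of which vector field it contains, and a bi-contact pair $(\xi_-,\xi_+)$ supports $X$ iff it supports $-X$. Likewise, dynamical positivity/negativity is defined through the plane $\langle v\rangle\oplus\langle X\rangle=\langle v\rangle\oplus\langle -X\rangle$ and supporting bi-contact structures, so it is unchanged by $X\mapsto -X$. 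Consequently statement $(2)$ for $-X$ is literally the same statement as $(2)$ for $X$, not $(3)$. The symmetry you want is orientation reversal of $M$ (which swaps positive and negative contact structures \emph{and} dynamical signs), or, as the paper does, simply re-running the argument with $\alpha_-=\tfrac12(\alpha_u+\alpha_s)$: one finds $R_{\alpha_-}\in\langle X,\,-r_se_u+r_ue_s\rangle$, and since the coefficients now have the \emph{same} sign precisely when $r_sr_u<0$, dynamical positivity of $R_{\alpha_-}$ is again equivalent to $r_u>0>r_s$.

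Finally, you should make explicit the regularity caveat that underlies all of these manipulations: the functions $p=\alpha_+(e_u)$, $q=\alpha_+(e_s)$ and the identities $A=X\cdot p+r_up$, $B=X\cdot q+r_sq$ require $\pi^{-1}(E^s),\pi^{-1}(E^u)$ (hence $e_s,e_u$) to be at least $X$-differentiable, and your maximum argument additionally needs $\log(|q|/p)$ to be differentiable at its maximum. The paper handles this by assuming $C^1$ invariant plane fields and then invoking the openness of Anosovity and of dynamical negativity together with the approximations of Theorem~\ref{main}; your write-up should say the same.
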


\begin{proof}
For simplicity assume $\pi^{-1}(E^s)$ and $\pi^{-1}(E^u)$ are $C^1$ plane fields. The general case follows from the approximations described in the proof of Theorem~\ref{main} and the fact that Anosovity, as well as being dynamically positive (negative) everywhere, are open conditions.

Assuming $(1)$, we now show $(2)$.

Choose a transverse plane field $\eta$ and let $e_s\in \pi^{-1}(E^s)\cap\eta$ and $e_u\in \pi^{-1}(E^u)\cap\eta$ be the unit vector fields with respect to the Riemannian metric satisfying $r_s<0<r_u$, and $\alpha_s$ defined by $\alpha_s(e_s)=1$ and $\alpha_s(\pi^{-1}(E^u))=0$ (see proof of Theorem~\ref{main} for notation). Similarly, define $\alpha_u$. Define $\alpha_+:=\frac{1}{2}(\alpha_u-\alpha_s)$. Note that $(\xi_-,\xi_+:=\ker{\alpha_+})$ is a supporting bi-contact structure, for an appropriate choice of $\xi_-$. The span of the Reeb vector field, $R_{\alpha_+}$, is determined by the two equations
$$d\alpha_+(X,R_{\alpha_+})=0=d\alpha_+(e_+,R_{\alpha_+}),$$
where $e_+\in\xi_+$ is a vector field such that that $\langle X \rangle \oplus \langle e_+ \rangle=\xi_+$.

Consider the vector $v:=-r_se_u-r_ue_s$ and note that since $r_s<0<r_u$, such vector is dynamically negative. Compute
$$d\alpha_+(X,v)=-r_sd\alpha_+(X,e_u)-r_ud\alpha_+(X,e_s)=-r_sr_u+r_sr_u=0.$$

This implies $R_{\alpha_+}\subset \langle X,v \rangle$ and therefore, $R_{\alpha_+}$ is dynamically negative everywhere.

Now assume $(2)$ and we establish $(1)$.

Let $\alpha_+$ be such contact form for $\xi_+$. Define $\alpha_u$ and $\alpha_s$ such that $\alpha_+=\frac{1}{2}(\alpha_u-\alpha_s)$ and $\alpha_s(\pi^{-1}(E^u))=\alpha_u(\pi^{-1}(E^s))=0$. Finally, choose a transverse plane field $\eta$ and define the Riemannian metric such that for unit vectors $e_s\in \pi^{-1}(E^s)\cap\eta$ and $e_u\in \pi^{-1}(E^u)\cap\eta$, we have $\alpha_s(e_s)=\alpha_u(e_u)=1$. By the above computation, we observe $R_{\alpha_+}\subset \langle X,-r_se_u-r_ue_s \rangle$. Since such vector is dynamically negative, this implies $$r_s<0<r_u,$$
and therefore, $X$ is Anosov.

Equivalence of $(1)$ and $(3)$ is similar.
\end{proof}

The above characterization can be used to show that the contact structures, underlying an Anosov flow, are {\em hypertight}. That is, they admit contact forms, whose associated Reeb flows do not have any contractible periodic orbits. The importance of hypertightness is due to the celebrated works of Helmut Hofer, et al in Reeb dynamics (see \cite{hofermain,hwz,wendl}) which show that if $(M,\xi)$ is a hypertight contact manifold, then, $\xi$ is tight, $M$ is irreducible and $(M,\xi)$ does not admit an exact symplectic cobordism to $(\mathbb{S}^3,\xi_{std})$. Note that in our case, such properties hold for any covering of $(M,\xi)$ as well, since we can lift the Anosov flow on $M$ to an Anosov flow on the covering.

\begin{theorem}
Let $(\xi_-,\xi_+)$ be a supporting bi-contact structure for an Anosov flow. Then, $\xi_-$ and $\xi_+$ are hypertight.
\end{theorem}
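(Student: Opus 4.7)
The plan is to apply Theorem~\ref{reebchar} to produce contact forms $\alpha_+$ and $\alpha_-$ for $\xi_+$ and $\xi_-$ whose Reeb vector fields $R_{\alpha_+}$ and $R_{\alpha_-}$ are dynamically negative and dynamically positive everywhere, respectively, and then to show that the associated Reeb flows have no contractible periodic orbits via Novikov's theorem applied to the weak invariant foliations of $X$.

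The first observation is that dynamical negativity of $R_{\alpha_+}$ translates into transversality to the weak stable foliation $\mathcal{F}^s := \pi^{-1}(E^s)$ with a consistent co-orientation. Indeed, inspecting the calculation in the proof of Theorem~\ref{reebchar}, $R_{\alpha_+}$ lies in the plane $\langle X,\, -r_s e_u - r_u e_s \rangle$ with $r_s < 0 < r_u$, so its projection to $TM/\langle X\rangle$ has a strictly positive $e_u$-component everywhere. This is precisely the statement that $R_{\alpha_+}$ is positively transverse to $\mathcal{F}^s$. Symmetrically, $R_{\alpha_-}$ is positively transverse to the weak unstable foliation $\mathcal{F}^u := \pi^{-1}(E^u)$.

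Next, I will invoke the classical fact that the weak stable and weak unstable foliations of a $3$-dimensional Anosov flow are taut -- in particular, contain no Reeb components -- see e.g.\ \cite{brunella, fenley2, mosher}. Combined with Novikov's theorem, which produces a Reeb component from any null-homotopic closed transversal of a codimension-one foliation, this shows that no closed positive transversal to $\mathcal{F}^s$ (respectively $\mathcal{F}^u$) can be contractible in $M$. Since every periodic orbit of $R_{\alpha_+}$ (resp.\ $R_{\alpha_-}$) is such a closed transversal, no periodic orbit can be null-homotopic, establishing hypertightness of $\xi_+$ (resp.\ $\xi_-$).

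The main technical subtlety I expect is the regularity of the weak foliations, which for a $C^1$ Anosov flow are only $C^0$ transversely (though with smooth leaves in the smooth case). This is handled either by appealing to versions of Novikov's theorem valid for foliations with $C^1$ leaves, or, in the spirit of Section~\ref{4}, by first approximating $\alpha_\pm$ by smooth contact forms close enough that positive transversality to $\mathcal{F}^{s}$ and $\mathcal{F}^u$ is preserved -- an openness argument that is routine given the dynamical negativity/positivity conditions already exploited throughout the paper -- and then applying the classical smooth Novikov theorem to the weak foliations of a nearby smooth Anosov flow.
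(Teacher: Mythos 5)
Your proposal is correct and shares the paper's overall structure: invoke Theorem~\ref{reebchar} to produce a Reeb vector field for $\xi_+$ (resp.\ $\xi_-$) that is dynamically negative (resp.\ positive) everywhere, observe that such a vector field is transverse to the weak invariant foliations, and then appeal to the classical absence of contractible closed transversals to those foliations. Where you diverge is in the last foliation-theoretic ingredient. You reach the conclusion via Novikov's theorem plus tautness (no Reeb components) of the weak stable/unstable foliations, and you propose to handle the low regularity of the foliations by a smoothing/approximation argument. The paper instead invokes Haefliger's 1962 theorem: a codimension-one foliation with a null-homotopic closed transversal contains a closed loop in a leaf whose germinal holonomy is trivial on one side and nontrivial on the other, and then observes directly that this is impossible for the weak (un)stable foliation of an Anosov flow, since the holonomy along such loops is uniformly contracting (resp.\ expanding) on \emph{both} sides. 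The paper's route is slightly more direct: it needs only the leafwise holonomy behavior, which is an intrinsic dynamical feature of Anosov flows, rather than the additional input of tautness of the weak foliations, and it sidesteps the regularity concerns for Novikov's theorem that you flag. That said, both establish the same well-known statement (no contractible closed transversal to the weak foliations of an Anosov flow), which the paper simply cites as Lemma~3.1 of \cite{topanosov}, so your argument is sound and essentially equivalent.

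One small point worth tightening: you claim $R_{\alpha_+}$ has a strictly positive $e_u$-component because it lies in the plane $\langle X, -r_s e_u - r_u e_s\rangle$; to conclude transversality to $\pi^{-1}(E^s)$ you should add the observation that $R_{\alpha_+}$ cannot be collinear with $X$ (since $\alpha_+(R_{\alpha_+})=1$ while $X\subset\ker\alpha_+$), so its component along $-r_se_u-r_ue_s$ is genuinely nonzero.
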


\begin{proof}
Consider $\xi_+$ (the $\xi_-$ case is similar). By Theorem~\ref{reebchar}, $\xi_+$ admits a Reeb vector field, which is dynamically negative everywhere. In particular, it is transverse to both stable and unstable foliations. However, it is a well known fact that there are no contractible closed transversals for stable or unstable foliations, associated to an Anosov flow (see Lemma~3.1 in \cite{topanosov}). More precisely, by \cite{haef}, if a codimension 1 foliation on a 3-manifold admits a contractible closed transversal, then there exists a closed loop in one of the leaves of the foliation, whose holonomy is trivial from one side and non-trivial from the other. That is impossible for the stable (unstable) foliation of an Anosov flow, since such holonomy needs to be contracting (expanding) on both sides.
\end{proof}

\begin{corollary}
Let $(\xi_-,\xi_+)$ be a supporting bi-contact structure for an Anosov flow. Then,

(1) $\xi_-$ and $\xi_+$ are universally tight;

(2) $M$ is irreducible;

(3) there are no exact symplectic cobordisms from $(M,\xi_+)$ or $(-M,\xi_-)$ to $(\mathbb{S}^3,\xi_{std})$.
\end{corollary}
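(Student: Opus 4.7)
The plan is to derive each of the three statements directly from the preceding hypertightness theorem by invoking the standard consequences of hypertightness in Reeb dynamics due to Hofer and collaborators~\cite{hofermain,hwz,wendl}. The symmetric roles of $\xi_-$ and $\xi_+$ let me argue only for $\xi_+$ throughout. A key preliminary observation is that hypertightness is stable under passing to covers: if $p:\tilde M\to M$ is any cover, the lift of the Anosov flow on $M$ is still Anosov on $\tilde M$, the pullback $p^*(\xi_-,\xi_+)$ is a supporting bi-contact structure for it, and the preceding theorem then produces hypertight contact forms upstairs (equivalently, the pullback of a hypertight contact form is hypertight, since a contractible Reeb orbit on $\tilde M$ would project to a contractible one on $M$).

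For (1), I will combine the preceding theorem with the result from~\cite{hofermain} that every closed hypertight contact 3-manifold is tight. Applied on the universal cover of $M$, this gives tightness of $\tilde\xi_\pm$, i.e.\ universal tightness of $\xi_\pm$. As the author remarks, (1) also falls out of Theorem~\ref{main1} via Gromov--Eliashberg, so the novelty here is the purely Reeb-dynamical route. For (2), I will invoke the companion irreducibility statement in~\cite{hofermain}: the underlying manifold of a closed hypertight contact 3-manifold is irreducible, since an essential embedded $2$-sphere would force, via a Bishop-family / filling-by-holomorphic-disks argument, a contractible Reeb orbit for any choice of contact form, contradicting hypertightness. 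Applying this to $(M,\xi_+)$ delivers irreducibility of $M$.

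For (3), I will argue by contradiction. Assume $(W,d\lambda)$ is an exact symplectic cobordism from $(M,\xi_+)$ to $(\mathbb{S}^3,\xi_{std})$, and glue $W$ on top of the standard exact Liouville filling $(\mathbb{B}^4,\omega_{std})$ of $(\mathbb{S}^3,\xi_{std})$ to obtain an exact symplectic filling $(\widehat W,\widehat\omega)$ of $(M,\xi_+)$. Choose a compatible almost complex structure that agrees with the standard one near the $\mathbb{B}^4$-piece and is adjusted to a hypertight contact form $\alpha_+$ at the concave boundary. The Bishop family of $J$-holomorphic disks emanating from the center of $\mathbb{B}^4$ extends through the cobordism, and the SFT-type compactness of~\cite{hwz,wendl} forces the limit to degenerate into a finite-energy $J$-holomorphic plane in $\widehat W$ asymptotic to a contractible Reeb orbit of $\alpha_+$. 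This contradicts hypertightness of $\alpha_+$, so no such $W$ exists. The case $(-M,\xi_-)$ is identical.

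The main anticipated obstacle is not mathematical but a regularity-bookkeeping issue: the contact forms produced by Theorem~\ref{main1} are only $C^1$, whereas the $J$-holomorphic curve technology underlying~\cite{hofermain,hwz,wendl} is typically stated for smooth data. I expect to handle this exactly as in Remark~\ref{reg}, by $C^1$-approximating the contact forms and exploiting openness of the relevant conditions---existence of pseudoholomorphic planes asymptotic to a given orbit, absence of contractible Reeb orbits below a controlled action, and so on---to reduce to the smooth setting.
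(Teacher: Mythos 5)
Your proposal follows exactly the route the paper intends: deduce the corollary from the hypertightness theorem by invoking the Hofer--Wysocki--Zehnder / Cioba--Wendl results (hypertight implies tight, irreducible, and no exact cobordism to $(\mathbb{S}^3,\xi_{std})$), using the lift-to-covers observation for universal tightness, and the paper indeed offers no separate argument beyond this citation. You supply somewhat more detail than the paper — in particular the Bishop-family sketch for (3) and the explicit note on the $C^1$ regularity of the contact forms and how to approximate around it — but these are elaborations of the same proof, not a different one.
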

\section{Bi-Contact Topology And Anosov Dynamics: Remarks And Questions}\label{7}

In Theorem~\ref{main}, we proved that the Anosovity of a flow is equivalent to a host of contact and symplectic geometric conditions. This bridge naturally creates a hierarchy of geometric conditions on the flow and therefore, a new filtration of Anosov dynamics, starting with projectively Anosov flows and ending with Anosov flows. It is of general interest to understand which layer of geometric conditions is responsible for properties of Anosov flows and introduces new geometric and topological tools to study questions in Anosov dynamics. In this section, we want to establish such hierarchy, make some remarks and formalize a platform for such study. Noting that such contact topological properties are preserved under homotopy of a projectively Anosov flow, this study also sheds light on the classical problem of classifying Anosov flows, by exploring the dynamical phenomena which can appear under {\em bi-contact homotopy}.

In Theorem~\ref{main}, we observed that underlying any Anosov flow is a bi-contact structure, corresponding to the projective Anosovity of the flow. In order to reduce the questions about Anosov dynamics to contact topological questions, we first need to understand the dependence of Anosovity on the geometry of the supporting bi-contact structure. First, we define two notions of equivalence for bi-contact structure, which can describe deformation of a projectively Anosov flow.

\begin{definition}\label{bilocal}
We call two bi-contact structures $(\xi_-,\xi_+)$ and $(\xi_-',\xi_+')$ {\em bi-contact homotopic}, if there exists a homotopy of bi-contact structures $(\xi_-^t,\xi_+^t),t\in[0,1]$ with $(\xi_-^0,\xi_+^0)=(\xi_-,\xi_+)$ and $(\xi_-^1,\xi_+^1)=(\xi_-',\xi_+')$. We call the two bi-contact structures {\em isotopic}, if such homotopy is induced by an isotopy of the underlying manifold.
\end{definition}

Note that bi-contact homotopy of $(\xi_-,\xi_+)$ and $(\xi_-',\xi_+')$ is equivalent to the supported projectively Anosov flows to be homotopic through projectively Anosov flows. Also in this case, by Gray's theorem, $\xi_-$ and $\xi_-'$, as well as $\xi_+$ and $\xi_+'$, are isotopic, but not necessary through the same isotopy.

The notion of bi-contact isotopy seems to be too rigid for the study of many geometric and topological aspects of Anosov dynamics, since it is not even preserved under general perturbations of a (projectively) Anosov flow. Note that even for a fixed projectively Anosov flow, the supporting bi-contact structure is not a priori unique up to isotopy (See Theorem~\ref{uniquebi}). On the other hand, bi-contact homotopies are more natural for many problems of topological and geometric nature. That is partly due to the {\em structural stability} of Anosov flows. That is, the perturbation of an Anosov flow is an Anosov flow which is orbit equivalent to the original flow. It is also known that the same holds for a generic projectively Anosov flow \cite{clinic}. Moreover, Theorem~\ref{uniquebi} shows that for a fixed projectively Anosov flow, the supporting bi-contact structure is unique up to bi-contact homotopy. 

However, the dependence of Anosovity on bi-contact homotopy is yet to be understood.

\begin{question}
Let $(\xi_-,\xi_+)$ be a bi-contact structure, supporting an Anosov flow, and $(\xi_-',\xi_+')$ another bi-contact structure which is bi-contact homotopic to $(\xi_-',\xi_+')$. Is a projectively Anosov flow which is supported by $(\xi_-',\xi_+')$ Anosov?
\end{question}

While an affirmative answer to the above question might be too optimistic (although we are not aware of an explicit counterexample), confirming the following more modest conjecture can still reduce many problems in Anosov dynamics, to a great extent, to contact topological problems. That includes problems concerning the orbit structures, their periodic orbits and the classification of Anosov flows up to orbit equivalence.

\begin{conjecture}
Two Anosov flows which are supported by bi-contact homotopic bi-contact structures are orbit equivalent. Equivalently, two Anosov flows which are homotopic through projectively Anosov flows are orbit equivalent.
\end{conjecture}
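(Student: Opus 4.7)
I would work with the second, equivalent formulation (PA-homotopy), since it is more amenable to dynamical techniques. Let $X_t$, $t\in[0,1]$, be a continuous path of projectively Anosov vector fields with $X_0$ and $X_1$ Anosov. Set
\[
A=\{t\in[0,1]:X_t\text{ is Anosov}\}\quad\text{and}\quad E=\{t\in A:X_t\text{ is orbit equivalent to }X_0\}.
\]
The first step is to use structural stability of Anosov flows \cite{anosov}: a small $C^1$ perturbation of an Anosov flow is Anosov and orbit equivalent to the original. This immediately shows $A$ is open in $[0,1]$ and that $E$ is open in $[0,1]$. A relative closedness argument is just as quick: if $t_n\in E$ and $t_n\to t_\infty\in A$, then for large $n$, $X_{t_n}$ lies in the structural stability neighborhood of $X_{t_\infty}$, so $X_{t_n}$ is orbit equivalent to $X_{t_\infty}$, forcing $t_\infty\in E$. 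Thus $E$ is a union of connected components of $A$, and the conjecture reduces to showing that $A=[0,1]$, i.e.\ that there is no ``Anosov gap'' $(a,b)\subset[0,1]\setminus A$ with $a,b\in A$.

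\textbf{Eliminating gaps via Liouville-pair interpolation.} To rule out gaps, my strategy is to replace the path $X_t$ on a neighborhood of any such $[a,b]$ by a path that stays in $A$, exploiting the bi-contact characterization. By Theorem~\ref{main11}, $X_a$ and $X_b$ admit Liouville pairs $(\alpha_-^a,\alpha_+^a)$ and $(\alpha_-^b,\alpha_+^b)$ (with the twin pairs $(-\alpha_-^{\bullet},\alpha_+^{\bullet})$) realizing their supporting bi-contact structures. Since $X_a$ and $X_b$ are connected through projectively Anosov flows, Theorem~\ref{uniquebi} yields a bi-contact homotopy $(\xi_-^t,\xi_+^t)$ joining the two supporting bi-contact structures. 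I would attempt to lift this bi-contact homotopy to a continuous family of Liouville pairs $(\alpha_-^t,\alpha_+^t)$ with $\ker\alpha_\pm^t=\xi_\pm^t$: near $t=a$ and $t=b$, use the explicit $\alpha_u^T,\alpha_s^T$ forms from the proof of Theorem~\ref{main}, and try to patch them through the gap using the $X$-respecting partition of unity of Lemma~\ref{app} combined with the formulas of Proposition~\ref{cageoformula2} and Proposition~\ref{balancedform}. If successful, the converse direction of Theorem~\ref{main} would produce Anosov vector fields throughout $[a,b]$, collapsing the gap.

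\textbf{Main obstacle.} The hard part — and essentially the content of the conjecture — is exactly this lift: one must show that every bi-contact structure in the homotopy class of a supporting bi-contact structure of an Anosov flow admits compatible Liouville-pair contact forms, even when the interpolating vector field is only a priori projectively Anosov. There is no known mechanism forcing this, and in the absence of Anosovity the expansion rates $r_u,r_s$ need not satisfy $r_u>0>r_s$, which is the geometric input driving the construction in the first implication of Theorem~\ref{main}. A reasonable fallback would be to prove the conjecture under stronger hypotheses where the construction can be genuinely made continuous, for instance: (i) when the two bi-contact structures are bi-contact \emph{isotopic} via an ambient isotopy, so that one can push forward a single Liouville pair along the isotopy; (ii) in the $\mathbb{R}$-covered or atoroidal settings, where the uniqueness theorems of Section~\ref{5} already pin down the supporting contact structures up to supporting isotopy; or (iii) via Theorem~\ref{reebchar}, by showing that a dynamically negative Reeb vector field for $\xi_+^t$ can be chosen to persist along the bi-contact homotopy, which would upgrade the PA flows to Anosov flows throughout.

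\textbf{Finishing up.} Once a gap-free path in $A$ is obtained (either directly, or after modification by one of the strategies above), the connectedness of $A=[0,1]$ forces $E=[0,1]$ by the open-and-closed argument of the first paragraph, and in particular $1\in E$, giving orbit equivalence of $X_0$ and $X_1$. Reparametrization freedom (any PA vector field supported by a fixed bi-contact structure is a positive rescaling of any other, hence orbit equivalent to it) then also gives the equivalence of the two formulations in the conjecture statement.
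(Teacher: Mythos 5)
This statement is labeled a \emph{conjecture} in the paper, and the paper offers no proof of it; there is therefore nothing to compare your argument against. What you have written is, to your credit, explicitly a strategy sketch rather than a proof, and your own diagnosis of where it stalls is essentially correct.

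A few substantive comments. Your open-and-closed argument cleanly reduces the conjecture to the statement that the set $A=\{t:X_t\text{ is Anosov}\}$ is all of $[0,1]$, but it is worth emphasizing that this reduction, while tidy, does not get you any closer to the conjecture: if $A$ has a gap, the argument simply tells you $E$ is a union of components of $A$, which is vacuous for deciding whether $1\in E$. The entire content of the conjecture is concentrated in ruling out Anosov gaps (or, absent that, in producing an orbit equivalence that jumps across one), and you correctly identify that no mechanism in the paper --- neither Theorem~\ref{main11}, nor Theorem~\ref{uniquebi}, nor the Reeb characterization in Theorem~\ref{reebchar} --- forces a projectively Anosov flow in the bi-contact homotopy class of an Anosov one to itself be Anosov. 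Indeed one should not expect this in full generality, given that bi-contact homotopy is a far more flexible relation than, say, $C^1$-perturbation; the conjecture's more modest claim is only about orbit equivalence of the Anosov \emph{endpoints}, and your reduction to ``$A$ is connected'' is strictly stronger than what is needed.

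Two smaller points. First, your closedness-of-$E$ argument silently uses that the path $t\mapsto X_t$ is continuous in the $C^1$ topology, which is required to invoke Anosov's structural stability; this should be made explicit, though it is reasonable to assume it given the bi-contact setup. Second, your one-line justification that the two formulations of the conjecture are equivalent (``reparametrization freedom'') is not quite enough: passing from a PA-homotopy to a bi-contact homotopy requires choosing a continuous family of supporting bi-contact structures along the path, and passing from a bi-contact homotopy to a PA-homotopy requires choosing a continuous section of the intersection line field; both are possible (the former using Theorem~\ref{uniquebi} to reconcile endpoint choices), but this deserves a sentence rather than a parenthetical.

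In short: your fallback scenarios (i)--(iii) are reasonable places to look for partial progress, but as written this is a research program, not a proof --- which is appropriate, since the statement is an open conjecture.
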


A weaker notion than bi-contact homotopy, is when given two bi-contact structures, the positive contact structures, as well as the negative contact structures, are isotopic. But the transversality of the two might be violated during the homotopy. In other words, can a pair of positive and negative contact structures be transverse in two distinct ways?

\begin{question}
Let $(\xi_-,\xi_+)$ and $(\xi_-',\xi_+')$ be two bi-contact structures, such that $\xi_-$ and $\xi_-'$, as well as $\xi_+$ and $\xi_+'$, are isotopic. Are $(\xi_-,\xi_+)$ and $(\xi_-',\xi_+')$ bi-contact homotopic?
\end{question}

After questions regarding the relation of Anosovity and the geometry of the supporting bi-contact structure, we can ask about the relation to the topology of bi-contact structures. More precisely, Anosovity implies rigid contact and symplectic topological properties of the underlying contact structures (see Theorem~\ref{main} and Remark~\ref{topprop}). It is natural to ask about the degree to which these topological properties are responsible for the dynamical properties of Anosov flows.

\begin{definition}
We call a bi-contact $(\xi_-,\xi_+)$ structure {\em tight}, if both $\xi_-$ and $\xi_+$ are tight, and we call a projectively Anosov flow {\em tight}, if it is supported by a tight bi-contact structure.
\end{definition}

To the best of our knowledge, it is not known whether the tightness of one of the supporting contact structures imply the same property for the other.

\begin{question}
If $(\xi_-,\xi_+)$ is a bi-contact structure, such that $\xi_-$ is tight. Can $\xi_+$ be overtwisted?
\end{question}

In \cite{mitsumatsu}, Mitsumatsu introduces a criteria of making a pair positive and negative contact structures transverse, which yields tight bi-contact structures on $\mathbb{T}^3$ and nil-manifolds (note that these projectively Anosov flows are not Anosov \cite{fun}). Here, we put down the explicit examples on $\mathbb{T}^3$.

\begin{example}\label{exbicontact}
After an isotopy, and for integers $m,n>0$, we consider the positive and negative tight contact structures of Example~\ref{excontact}~3) on $\mathbb{T}^3$, $\xi_n=\ker{dz+\epsilon\{\cos{2\pi nz}dx-\sin{2\pi nz}dy\}}$ and $\xi_{-m}=\ker{dz+\epsilon'\{\cos{2\pi mz}dx+\sin{2\pi mz}dy\}}$, respectively. It is easy to observe that if $\epsilon\neq\epsilon'$, then $\xi_n$ and $\xi_{-m}$ are transverse everywhere, and therefore, their intersection contains tight projectively Anosov vector fields.
\end{example}



\begin{definition}
We call a bi-contact structure $(\xi_-,\xi_+)$ on $M$ {\em weakly, strongly or exactly symplectically bi-fillable}, if there exists $(X,\omega)$, which is a weak, strong or exact symplectically filling for $(M,\xi_+)\sqcup(-M,\xi_-)$, respectively, where $-M$ is $M$ with reversed orientation. We call a projectively Anosov flow supported by such bi-contact structure, {\em weakly, strongly or exactly symplectically bi-fillable}, respectively. Furthermore, we call the symplectic bi-filling {\em trivial}, if $X\simeq M\times[0,1]$.
\end{definition}

Note that any Anosov flow is exactly symplectically bi-fillable. Furthermore, any exactly bi-fillable projectively Anosov flow is strongly bi-fillable and any strongly bi-fillable projectively Anosov flow is weakly bi- fillable.

Using the idea of Example~\ref{exfilling} 2), we can show that these projectively Anosov flows are in fact, weakly bi-fillable.

\begin{theorem}
The tight projectively Anosov flows of Example~\ref{exbicontact} are trivially weakly symplectically bi-fillable. 
\end{theorem}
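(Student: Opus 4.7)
The plan is to adapt the construction of Example~\ref{exfilling}~2) to the bi-filling setting. I take the total space $X = \mathbb{T}^2 \times A$, where $A$ is a closed annulus; since $A$ is diffeomorphic to $S^1 \times [0,1]$, we have $X \cong \mathbb{T}^3 \times [0,1]$, so $X$ is a trivial bi-filling candidate. I equip $X$ with the product symplectic form $\omega = \pi_{\mathbb{T}^2}^* \omega_{\mathbb{T}^2} + \pi_A^* \omega_A$, where $\omega_{\mathbb{T}^2} = dx \wedge dy$ is an area form on $\mathbb{T}^2$ and $\omega_A$ is a positive area form on $A$. Then $\omega \wedge \omega = 2\,\omega_{\mathbb{T}^2} \wedge \omega_A$ is a nowhere-vanishing volume form, so $\omega$ is symplectic.

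Next, following Example~\ref{exfilling}~2), I isotope the contact structures $\xi_n$ and $\xi_{-m}$ by taking $\epsilon, \epsilon' > 0$ small (and distinct), so that both are $C^0$-close to the horizontal foliation $\ker dz$ on $\mathbb{T}^3$. At each boundary component of $X$, the 1-form $dr$ on $A$ vanishes on tangent vectors of $\partial A$, so the restriction of $\omega$ to $\partial X$ equals $\omega_{\mathbb{T}^2}$. Since $\omega_{\mathbb{T}^2}$ is a positive area form on $\mathbb{T}^2$ and restricts positively to $\ker dz$, by continuity its restriction to the nearby contact planes is positive for sufficiently small $\epsilon, \epsilon'$. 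Identifying the two boundary components with $(\mathbb{T}^3, \xi_n)$ and $(-\mathbb{T}^3, \xi_{-m})$ respectively, $(X,\omega)$ provides the desired weak symplectic bi-filling.

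The main obstacle I anticipate is the careful verification of orientation conventions, since the outer and inner boundary circles of $A$ induce opposite orientations on $\mathbb{T}^3$, giving $\partial X \cong \mathbb{T}^3 \sqcup (-\mathbb{T}^3)$. On the negatively oriented component $-\mathbb{T}^3$, the contact form $\alpha_{-m}$ becomes a positive contact form, and one must verify that the contact orientation of $\xi_{-m}$ as a positive contact structure on $-\mathbb{T}^3$ matches the horizontal orientation on which $\omega_{\mathbb{T}^2}$ is positive. Concretely, this reduces to a direct linear-algebra check at each boundary point, analogous to the computation in Example~\ref{exfilling}~2) for $\xi_n$ alone. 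Once this check is carried out, $(X, \omega)$ establishes that $(\xi_{-m}, \xi_n)$ is a trivially weakly symplectically bi-fillable bi-contact structure, proving the theorem.
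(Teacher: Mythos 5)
Your proof follows essentially the same route as the paper: take $X=\mathbb{T}^2\times A$ with the product symplectic form $\omega=\omega_{\mathbb{T}^2}\oplus\omega_A$, observe that $\omega$ restricted to the boundary tori equals $\omega_{\mathbb{T}^2}$, and use the fact that for small $\epsilon,\epsilon'$ the contact planes $\xi_n,\xi_{-m}$ are $C^0$-close to $\ker dz$ so that $\omega_{\mathbb{T}^2}$ is nondegenerate (and of the right sign) on them. This is exactly the argument the paper gives, modeled on Example~\ref{exfilling}~2). The one place you go slightly beyond the paper is in explicitly flagging the orientation verification on the $-\mathbb{T}^3$ boundary component; the paper does not address this at all, so your treatment is, if anything, slightly more careful. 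You are right that it amounts to a routine pointwise check (which indeed works out once the weak-filling condition is read as nondegeneracy of $\omega|_\xi$ together with the requirement that the boundary orientation induced by $\omega^2$ agrees with the contact orientations of $\xi_n$ on $\mathbb{T}^3$ and of $\xi_{-m}$ on $-\mathbb{T}^3$), but since you defer that check rather than carry it out, your write-up, like the paper's, leaves that last step implicit.
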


\begin{proof}
Let $X=\mathbb{T}^2\times A$, where $A$ is an annulus. Consider the coordinates $(x,y)$ for $\mathbb{T}^2$ and let $z$ be the angular coordinate of $A$, near its boundary. If $\omega_1$ and $\omega_2$ are some area forms on $\mathbb{T}^2$ and $A$, respectively, then $\omega=\omega_1\oplus\omega_2$ will be a symplectic form on $X$, such that $\omega|_{\ker{dz}}>0$. Choosing small enough $\epsilon,\epsilon'>0$ in Example~\ref{exbicontact}, $\xi_n$ and $\xi_{-m}$ would be arbitrary close to $\ker{dz}$ and therefore, $\omega|_{\xi_n},\omega|_{\xi_{-m}}>0$, implying that $(\xi_{-m},\xi_n)$ is weakly symplectically bi-fillable, for any pair of integers $m,n>0$.
\end{proof}

Using \cite{elfilling,etfilling}, we know such tight projectively Anosov flows are not strongly symplectically bi-fillable (that would conclude $\xi_n$ and $\xi_{-m}$ to be strongly symplectically bi-fillable, which is not the case \cite{eltorus}).

\begin{corollary}
There are (trivially) weakly symplectically bi-fillable projectively Anosov flows, which are not strongly symplectically bi-fillable.
\end{corollary}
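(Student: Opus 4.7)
The plan is to extract the corollary directly from the theorem immediately preceding it together with two cited facts: (a) that each component of a disconnected strong symplectic filling is itself strongly fillable, and (b) that contact structures carrying Giroux torsion are not strongly fillable. So the proof is essentially an assembly of already-invoked results, and there is no real technical obstacle beyond arranging the implications.

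Concretely, I would begin by fixing integers $n,m>1$ and taking the bi-contact structure $(\xi_{-m},\xi_n)$ on $\mathbb{T}^3$ coming from Example~\ref{exbicontact}. The preceding theorem produces, after shrinking $\epsilon,\epsilon'>0$, an explicit weak symplectic bi-filling of $(\xi_{-m},\xi_n)$ on the trivial cobordism $\mathbb{T}^2\times A$; in particular $(\xi_{-m},\xi_n)$ is trivially weakly symplectically bi-fillable in the sense of the definition given above.

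Next I would argue non-strong-bi-fillability by contradiction. Suppose $(X,\omega)$ were a strong symplectic filling of $(\mathbb{T}^3,\xi_n)\sqcup(-\mathbb{T}^3,\xi_{-m})$. By the classical result recorded before Example~\ref{exfilling}, namely that each connected component of the boundary of a strong symplectic filling is itself strongly symplectically fillable \cite{elfilling,etfilling}, it would follow that $(\mathbb{T}^3,\xi_n)$ admits a strong symplectic filling. But by Remark~\ref{torsion}, for $n>1$ the contact manifold $(\mathbb{T}^3,\xi_n)$ contains Giroux torsion, and by \cite{gay} any contact 3-manifold with Giroux torsion has no strong symplectic filling. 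This contradiction shows $(\xi_{-m},\xi_n)$ is not strongly symplectically bi-fillable, so the projectively Anosov flows supported by these bi-contact structures on $\mathbb{T}^3$ provide the desired examples.

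The main (minor) obstacle is purely bookkeeping: one must be sure that the cited non-fillability result for components of a disconnected strong filling applies verbatim to the strong symplectic bi-filling defined in Section~\ref{7}, which it does since the definition of strong bi-filling is precisely a strong filling of the disjoint union $(M,\xi_+)\sqcup(-M,\xi_-)$. No further computation is needed.
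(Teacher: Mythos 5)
Your argument is correct and follows essentially the same route as the paper: use the preceding theorem for the trivial weak bi-filling, then invoke \cite{elfilling,etfilling} to split a hypothetical strong bi-filling into strong fillings of the components, and derive a contradiction from non-strong-fillability of $\xi_n$. The paper cites \cite{eltorus} for that last fact, whereas you route through Giroux torsion and \cite{gay} (both of which the paper records in Remark~\ref{torsion} and Example~\ref{exfilling}); this is only a choice of citation, not a different argument.
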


The properness of other inclusions in the described hierarchy of projectively Anosov flows remains an open problem.

\begin{question}\label{qprop}
Are there tight projectively Anosov flows, which are not weakly symplectically bi-fillable? Are there strongly bi-fillable projectively Anosov flows, which are not exactly bi-fillable? Are there exactly bi-fillable projectively Anosov flows, which are not Anosov?
\end{question}

\begin{remark}
Here, we remark that our filtration of contact and symplectic conditions on a projectively Anosov flow is what we found more natural and can be refined and modified in other ways and using other conditions, for instance on the topology of the symplectic fillings, etc. In particular, we note that in our definition of symplectic bi-fillings for a projectively Anosov flow, supported by a bi-contact structure $(\xi_-,\xi_+)$, we did not consider bi-fillability for both $(\xi_-,\xi_+)$ and $(-\xi_-,\xi_+)$, a condition which is satisfied for Anosov flows. Note that symplectic fillability, for a contact manifold $(M,\xi)$ with connected boundary, does not depend on the coorientation of the contact structure, since if $(X,\omega)$ is a symplectic filling for such contact manifold, then $(X,-\omega)$ would be a symplectic filling for $(M,-\xi)$. But when we have disconnected boundary, like in the case of bi-contact structures, fillability properties might change if we flip the orientation of only one of the contact structures.
\end{remark}

\Addresses

\begin{thebibliography}{00}


\bibitem{anosov0} D. V. Anosov, {\em Ergodic properties of geodesic flows on closed Riemannian manifolds of negative curvature}, Dokl. Akad. Nauk SSSR, 151:6 (1963), 1250–1252

\bibitem{anosov} Anosov, Dmitry Victorovich. {\em Geodesic flows on closed Riemannian manifolds of negative curvature.} Trudy Mat. Inst. Steklov 90.5 (1967).

\bibitem{clinic} Arroyo, Aubin, and Federico Rodriguez Hertz. {\em Homoclinic bifurcations and uniform hyperbolicity for three-dimensional flows.} Annales de l'Institut Henri Poincare (C) Non Linear Analysis. Vol. 20. No. 5. Elsevier Masson, 2003.

\bibitem{asaoka} Asaoka, Masayuki. {\em Regular projectively Anosov flows on three-dimensional manifolds.} Annales de l'Institut Fourier. Vol. 60. No. 5. 2010.

\bibitem{otbi} Asaoka, Masayuki, Emmanuel Dufraine, and Takeo Noda. {\em Homotopy classes of total foliations.} Commentarii Mathematici Helvetici 87.2 (2012): 271-302.

\bibitem{barbot} Barbot, Thierry. {\em Caractérisation des flots d'Anosov en dimension 3 par leurs feuilletages faibles.} Ergodic Theory and Dynamical Systems 15.2 (1995): 247-270.

\bibitem{topanosov} Barthelmé, Thomas. {\em Anosov flows in dimension 3 Preliminary version.} (2017).

\bibitem{knotanosov} Barthelmé, Thomas, and Sergio R. Fenley. {\em Knot theory of $\mathbb{R}$‐covered Anosov flows: homotopy versus isotopy of closed orbits.} Journal of Topology 7.3 (2014): 677-696.

\bibitem{building} Béguin, François, Christian Bonatti, and Bin Yu. {\em Building Anosov flows on 3–manifolds.} Geometry \& Topology 21.3 (2017): 1837-1930.

\bibitem{blperr} Blair, David E., and D. Peronne. {\em Conformally Anosov flows in contact metric geometry.} Balkan Journal of Geometry and Its Applications 3.2 (1998): 33-46.

\bibitem{bbp} Bonatti, Christian, Jonathan Bowden, and Rafael Potrie. {\em Some Remarks on Projective Anosov Flows in Hyperbolic 3-Manifolds.} 2018 MATRIX Annals. Springer, Cham, 2020. 359-369.

\bibitem{beyond} Bonatti, Christian, Lorenzo J. Díaz, and Marcelo Viana. {\em Dynamics beyond uniform hyperbolicity: A global geometric and probabilistic perspective.} Vol. 102. Springer Science \& Business Media, 2006.

\bibitem{bowden} Bowden, Jonathan. {\em Approximating $C^0$-foliations by contact structures.} Geometric and Functional Analysis 26 (2016): 1255-1296.

\bibitem{brunella} Brunella, Marco. {\em Separating the basic sets of a nontransitive Anosov flow.} Bulletin of the London Mathematical Society 25.5 (1993): 487-490.

\bibitem{wendl} Cioba, A., and C. Wendl. {\em Unknotted Reeb orbits and nicely embedded holomorphic curves (2016).} Preprint arXiv 1609.

\bibitem{colin} Colin, Vincent, and Sebastiao Firmo. {\em Paires de structures de contact sur les variétés de dimension trois.} Algebraic \& Geometric Topology 11.5 (2011): 2627-2653.

\bibitem{llave} De La Llave, Rafael. {\em Rigidity of higher-dimensional conformal Anosov systems.} Ergodic Theory and Dynamical Systems 22.6 (2002): 1845-1870.

\bibitem{hyplift} Doering, Clauss. {\em Persistently transitive vector fields on three-dimensional manifolds, Dynamical Systems and Bifurcation Theory.} Pitman Res. Notes Math. Ser. 160 (1987): 59-89.

\bibitem{elfilling} Eliashberg, Yakov. {\em A few remarks about symplectic filling.} Geometry \& Topology 8.1 (2004): 277-293.

\bibitem{elover} Eliashberg, Yakov. {\em Classification of overtwisted contact structures on 3-manifolds.} Inventiones mathematicae 98.3 (1989): 623-637.

\bibitem{elmart} Eliashberg, Yakov. {\em Contact 3-manifolds twenty years since J. Martinet's work.} Annales de l'institut Fourier. Vol. 42. No. 1-2. 1992.

\bibitem{elfilltight} Eliashberg, Yakov. {\em Filling by Holomorphic Discs and its Applications.} Geometry of Low-Dimensional Manifolds: Volume 2: Symplectic Manifolds and Jones-Witten Theory 2 (1990): 45.

\bibitem{eltorus} Eliashberg, Yakov. {\em Unique holomorphically fillable contact structure on the 3-torus.} International Mathematics Research Notices 1996.2 (1996): 77-82.

\bibitem{confoliations} Eliashberg, Yakov, and William P. Thurston. {\em Confoliations.} Vol. 13. American Mathematical Soc., 1998.

\bibitem{etfilling} Etnyre, John B. {\em On symplectic fillings.} Algebraic \& Geometric Topology 4.1 (2004): 73-80.

\bibitem{nos3} Etnyre, John, and Robert Ghrist. {\em Tight contact structures and Anosov flows.} Topology and its Applications 124.2 (2002): 211-219.

\bibitem{fenley1} Fenley, Sérgio R. {\em Anosov flows in 3-manifolds.} Annals of Mathematics 139.1 (1994): 79-115.

\bibitem{fenley2} Fenley, Sérgio R. {\em Quasigeodesic Anosov flows and homotopic properties of flow lines.} Journal of Differential Geometry 41.2 (1995): 479-514.


\bibitem{hypanosov} Foulon, Patrick, and Boris Hasselblatt. {\em Contact Anosov flows on hyperbolic 3–manifolds.} Geometry \& Topology 17.2 (2013): 1225-1252.

\bibitem{gay} Gay, David T. {\em Four-dimensional symplectic cobordisms containing three-handles.} Geometry \& Topology 10.3 (2006): 1749-1759.

\bibitem{geiges} Geiges, Hansjörg. {\em An introduction to contact topology.} Vol. 109. Cambridge University Press, 2008.

\bibitem{ghys} Ghys, Étienne. {\em Rigidité différentiable des groupes fuchsiens.} Publications Mathématiques de l'Institut des Hautes Études Scientifiques 78.1 (1993): 163-185.

\bibitem{giroux} Giroux, Emmanuel. {\em Structures de contact en dimension trois et bifurcations des feuilletages de surfaces.} Inventiones mathematicae 141.3 (2000): 615-689.

\bibitem{grfilltight} Gromov, Mikhael. {\em Pseudo holomorphic curves in symplectic manifolds.} Inventiones mathematicae 82.2 (1985): 307-347.

\bibitem{regular} Hasselblatt, Boris. {\em Regularity of the Anosov splitting and of horospheric foliations.} Ergodic Theory and Dynamical Systems 14.4 (1994): 645-666.

\bibitem{haef} Haefliger, André. {\em Variétés feuilletées.} (French) Ann. Scuola Norm. Sup. Pisa Cl. Sci. (3) 16 (1962), 367–397.

\bibitem{hps} Hirsch, Morris W., Charles Chapman Pugh, and Michael Shub. {\em Invariant manifolds.} Vol. 583. Springer, 2006.

\bibitem{hofermain} Hofer, Helmut. {\em Pseudoholomorphic curves in symplectizations with applications to the Weinstein conjecture in dimension three.} Inventiones mathematicae 114.1 (1993): 515-563.

\bibitem{hwz} Hofer, Helmut, Kris Wysocki, and Eduard Zehnder. {\em Unknotted periodic orbits for Reeb flows on the three-sphere.} Topological Methods in Nonlinear Analysis 7.2 (1996): 219-244.

\bibitem{honda1} Honda, Ko. {\em On the classification of tight contact structures I.} Geom. Topol. 4 (2000), no. 1, 309--368. doi:10.2140/gt.2000.4.309.

\bibitem{honda2} Honda, Ko. {\em On the classification of tight contact structures II.} Journal of Differential Geometry 55.1 (2000): 83-143.

\bibitem{hoz} Hozoori, Surena. {\em Dynamics and topology of conformally Anosov contact 3-manifolds.} Differential Geometry and its Applications 73 (2020): 101679.

\bibitem{hoz3} Hozoori, Surena. {\em On Anosovity, divergence and bi-contact surgery.} arXiv preprint arXiv:2109.00566 (2021).

\bibitem{hoz2} Hozoori, Surena. {\em Ricci Curvature, Reeb Flows and Contact 3-Manifolds.} The Journal of Geometric Analysis (2021): 1-26.

\bibitem{hruder} Hurder, Steve, and Anatole Katok. {\em Differentiability, rigidity and Godbillon-Vey classes for Anosov flows.} Publications Mathématiques de l'Institut des Hautes Études Scientifiques 72.1 (1990): 5-61.

\bibitem{kanai} Kanai, Masahiko. {\em Differential-geometric studies on dynamics of geodesic and frame flows.} Japanese journal of mathematics. New series 19.1 (1993): 1-30.

\bibitem{kazez} Kazez, William, and Rachel Roberts. {\em $C^0$ approximations of foliations.} Geometry \& Topology 21.6 (2017): 3601-3657.

\bibitem{marg} Margulis, Gregori Aleksandrovitsch. {\em On some aspects of the theory of Anosov systems.} On Some Aspects of the Theory of Anosov Systems. Springer, Berlin, Heidelberg, 2004. 1-71.

\bibitem{mnw} Massot, Patrick, Klaus Niederkrüger, and Chris Wendl. {\em Weak and strong fillability of higher dimensional contact manifolds.} Inventiones mathematicae 192.2 (2013): 287-373.

\bibitem{stein} McDuff, Dusa. {\em Symplectic manifolds with contact type boundaries.} Inventiones mathematicae 103.1 (1991): 651-671.

\bibitem{mosher} Mosher, Lee. {\em Dynamical systems and the homology norm of a $3 $-manifold, I: efficient intersection of surfaces and flows.} Duke Mathematical Journal 65.3 (1992): 449-500.

\bibitem{mitsumatsu} Mitsumatsu, Yoshihiko. {\em Anosov flows and non-Stein symplectic manifolds.} Annales de l'institut Fourier. Vol. 45. No. 5. 1995.

\bibitem{noda} Noda, Takeo. {\em Projectively Anosov flows with differentiable (un) stable foliations.} Annales de l'institut Fourier. Vol. 50. No. 5. 2000.

\bibitem{fun} Plante, J. F., and William P. Thurston. {\em Anosov flows and the fundamental group.} Topology 11.2 (1972): 147-150.

\bibitem{perr} Perrone, Domenico. {\em Torsion and conformally Anosov flows in contact Riemannian geometry.} Journal of Geometry 83.1-2 (2005): 164-174.

\bibitem{impa} Pujals, Enrique. {\em From hyperbolicity to dominated splitting.} Inst. de Matemática Pura e Aplicada, 2006.

\bibitem{pujals} Pujals, Enrique R., and Martín Sambarino. {\em On the dynamics of dominated splitting.} Annals of mathematics (2009): 675-739.

\bibitem{survdom} Sambarino, Martın. {\em A (short) survey on dominated splittings.} Mathematical congress of the Americas. Vol. 656. American Mathematical Soc., 2016.

\bibitem{simic} Simić, Slobodan. {\em Codimension one Anosov flows and a conjecture of Verjovsky.} Ergodic Theory and Dynamical Systems 17.5 (1997): 1211-1231.

\bibitem{sullivan} Sullivan, Dennis. {\em On the ergodic theory at infinity of an arbitrary discrete group of hyperbolic motions.} Riemann surfaces and related topics, Proceedings of the 1978 Stony Brook Conference, State Univ. New York, Stony Brook. Princeton Univ. Press, 1981.


\end{thebibliography}
\end{document}